\documentclass[11pt]{amsart}
\usepackage{a4}
\usepackage{amssymb}
\voffset-1.0 cm
\hoffset -1.5 cm
\textwidth 15 cm
\textheight 23 cm

\vfuzz2pt 
\hfuzz2pt 

\makeatletter
\newcommand{\leqnomode}{\tagsleft@true}
\newcommand{\reqnomode}{\tagsleft@false}
\makeatother

\newtheorem{theorem}{Theorem}[section]
\theoremstyle{plain}
\newtheorem{lemma}[theorem]{Lemma}
\newtheorem{proposition}[theorem]{Proposition}

\newtheorem{fact}[theorem]{Fact}
\newtheorem{corollary}[theorem]{Corollary}

\theoremstyle{remark}

\newtheorem{definition}[theorem]{Definition}
\newtheorem{example}[theorem]{Example}

\newtheorem{notamark}[theorem]{Notation \& Remark}

\newtheorem{remark}[theorem]{Remark}

\numberwithin{equation}{section}

\newcommand{\ds}{\displaystyle}
\newcommand{\ph}{\varphi}
\newcommand{\eps}{\varepsilon}
\newcommand\trnorm[1]{\left\|\kern-1.2pt\left|#1\right\|\kern-1.2pt\right|}

\begin{document}
\title[]{Abstract Lorentz spaces and K\"othe duality}
\date{January 31, 2019}

\keywords{Lorentz space, K\"othe duality, Banach function space, symmetric space, rearrangement inequalities, modular space.}
\subjclass[2010]{46B20, 46E30, 47B38}

\author[Kami\'nska]{Anna Kami\'{n}ska}
\address[Kami\'nska]{Department of Mathematical Sciences,
The University of Memphis, TN 38152-3240, U.S.A.}
\email{kaminska@memphis.edu}

\author[Raynaud]{Yves Raynaud}
\address[Raynaud]{CNRS, Sorbonne Universit\'e, Universit\'e Paris Diderot, Institut de Math\'ematiques de Jussieu-Paris Rive Gauche, IMJ-PRG, F-75005, Paris, France.}
\email{\texttt{yves.raynaud@upmc.fr}}

\maketitle
\bigskip

\begin{abstract}   
Given a symmetric Banach function space $E$ and a decreasing positive weight $w$ on $I=(0,a)$, $0<a\le \infty$, the  generalized Lorentz space $\Lambda_{E,w}$ is defined as the symmetrization of the canonical copy $E_w$ of $E$ on the measure space associated with the weight.   A class of functions $M_{E,w}$ is similarly defined in the spirit of Marcinkiewicz spaces as the symmetrization of the space $w\, E_w$.  Differently from the Lorentz space, which is a Banach function space, the class $M_{E,w}$ does not need to be even a linear space; but we show that if the weight $w$ is regular then this class is normable. 
Let also $Q_{E,w}$ be the smallest fully symmetric Banach function space  containing $M_{E,w}$. 
The K\"othe duality  of these classes is developed  here. The K\"othe dual of the class $M_{E,w}$ is identified as the Lorentz space $\Lambda_{E',w}$, while the K\"othe dual of  $\Lambda_{E,w}$ is $Q_{E',w}$. Several characterizations of  $Q_{E,w}$ are obtained, one of them states that a function belongs to $Q_{E,w}$ if and only if its level function in Halperin's sense with respect to $w$, belongs to $M_{E,w}$. The other characterizations are by optimization with respect to the Hardy-Littlewood submajorization order. These results are applied to a number of concrete Banach function spaces. In particular a  new description of the K\"othe dual space is provided for the classical Lorentz space $\Lambda_{p,w}$ and for the Orlicz-Lorentz space $\Lambda_{\varphi,w}$, which correspond respectively to the cases $E=L_p$ and $E=L_\ph$. 
\end{abstract}

\bigskip

Given a positive locally integrable weight $w$ on an interval $I=(0,a)$, $0<a\le \infty$, and $p\in [1,\infty)$, the classical Lorentz space $\Lambda_{p,w}$ is the set of  measurable functions $f$ having a non-increasing rearrangement $f^*$ such that $\int_I (f^*)^pw\,dm <\infty$, where  $m$ denotes the Lebesgue measure. This class is a symmetric Banach function space and the formula $\|f\|_{p,w}= (\int_I (f^*)^pw\,dm <\infty)^{1/p}$ defines a norm if and only if the weight $w$ is non-increasing \cite{BS, KPS}. Orlicz-Lorentz spaces may be defined in a similar way. Following \cite{K}, given an Orlicz function $\ph$, consider the  modular $\Phi$ defined on the set of Lebesgue measurable functions $L^0(I)$ by
\[\Phi(f) = \int_I \ph(f^*)w\,dm.\]
Then the Orlicz-Lorentz space $\Lambda_{\ph,w}$ is the set of  $f\in L^0(I)$ such that $\{c: \Phi(f/c)<\infty\}\ne\emptyset$.
If $w$ is non-increasing then $\Phi$ is convex, $\Lambda_{\ph,w}$ is a linear subset and an  ideal in $L^0(I)$, and the formula $\|f\|_{\ph,w}:=\inf\{c: \Phi(f/c)\le 1\}$ defines a norm, called the Luxemburg or second Nakano norm, for which $\Lambda_{\ph,w}$ is complete and symmetric. Clearly if $\ph(t)=t^p$ we recover the classical Lorentz space 
 $\Lambda_{p,w}$, so that Orlicz-Lorentz spaces are a generalization of ordinary Lorentz spaces. We refer to \cite{KR, KLR} for a study of K\"othe duality of Orlicz-Lorentz spaces. 
 
 Our goal in this paper is to generalize further the class of  Orlicz-Lorentz spaces by replacing Orlicz spaces by general symmetric K\"othe function spaces. We will use the fact that  the classical Lorentz spaces $\Lambda_{p,w}$ and the Orlicz-Lorentz spaces $\Lambda_{\ph,w}$ respectively, are symmetrizations \cite{KR09} with respect to the Lebesgue measure  of  the spaces $L_p$ and Orlicz spaces $L_\ph$ on the measure space $(I, wdm)$, respectively. Clearly the latter spaces are weighted $L_p(I,wdm)$ and $L_\varphi(I,wdm)$ spaces  respectively, and thus they are symmetric with respect to the measure $wdm$ on $I$.
 Thus it  is natural to consider symmetrizations, with respect to $m$, of arbitrary  K\"othe function spaces $\mathcal E$ which are symmetric with respect to the measure $wdm$. However, since we do not want the space parameter,  which will play the role of the exponent $p$ or the Orlicz function $\ph$, to depend on the weight $w$,  we choose to take as space $\mathcal E$ the natural copy $E_w$ on the measure space $(I,wdm)$ of a symmetric K\"othe function space $E$ defined on the measure space $(J,m)$ where $J$ is an interval $(0,b)$, $b\in (0,\infty]$.   The symmetrization of $E_w$ will thus be denoted by $\Lambda_{E,w}$ and called a {\it generalized Lorentz space}.
 
In this paper $w$ will always denote a non-increasing weight. We assume also that $E$ is {\it fully symmetric}  that is $E$ is hereditary by Hardy-Littlewood submajorization and the norm is monotone with respect to this submajorization. It follows from these hypotheses that the set $\Lambda_{E,w}$ is a linear space and that the formula $\|f\|_{\Lambda_{E,w}}=\|f^*\|_{E_w}$ defines a norm on it. In fact $\Lambda_{E,w}$ is a fully symmetric  Banach function space.  

 The goal is to provide a description of the K\"othe dual space of the Lorentz space $\Lambda_{E,w}$ in this abstract form, following the pattern of our previous article \cite{KR} on Orlicz-Lorentz spaces.  It was proved in \cite{HKM} that when the weight $w$ is {\it regular} the K\"othe dual of $\Lambda_{\ph,w}$ is equal to the symmetrization of the K\"othe dual of the weighted Orlicz space $(L_\ph)_w$. Since $(E_w)'=w\cdot (E')_w$ with equal norms, where  $E'$ means the K\"othe dual of $E$, it is natural to introduce for a general symmetric space $E$, not only for K\"othe duals, the ``class'' $M_{E,w}$ defined by
 \[
 M_{E,w}= \{f\in L^0(I): f^*\in w\cdot E_w\}= \{f\in L^0(I): f^*/w\in E_w\}.
 \]
 This class is closed under scalar multiplication but not necessarily by sums, hence it may even not be a linear subspace of $L^0(I)$.  It may be equipped with a gauge $\|f\|_{M_{E,w}} = \|f^*/w\|_{E_w}$, which is positively homogeneous, faithful, monotone and symmetric.  In section \ref{sec:classes-M} we prove that if the weight $w$ is regular then the class $M_{E,w}$ is a linear subspace of $L^0(I)$ and its gauge is equivalent to a norm. The proof of this latter result is based on an optimization formula for the gauge which is of interest by itself, namely
 \[ \|f\|_{M_{E,w}}= \inf\{ \|f/v\|_{E_v}: v\ge 0, v^*=w, \mathrm {supp}\,v\supset \mathrm {supp}\,f\}. \]
 A similar formula was proved in the setting of Orlicz spaces and modulars, in our article \cite{KR}. The proof depended on a certain inequality for rearrangements and weights \cite[Proposition 2.1]{KR}, that cannot have any equivalent form in the present setting. Here this argument is replaced by a completely new one, namely a submajorization formula for rearrangements and weights,  which  is stated and proved in section \ref{sec:an-inequality} (see Theorem \ref{H-L}).
 
 Although the class  $M_{E,w}$ may not be a vector space and its gauge may not be convex, its K\"othe dual space can be defined as the domain of finiteness of the dual function norm $L^0(I)\to [0,+\infty]$,
\[ \|f\|_{(M_{E,w})' } = \sup\left\{ \int_I |fg|\,dm: g\in M_{E,w}, \|g\|_{M_{E,w}}\le 1 \right\}, \]
which is an ideal in $L^0(I)$ normed by the above function norm, and a Banach function space with  Fatou property.   

The next step, performed in section \ref{sec:Koethe duality}, is to show that the K\"othe dual of $M_{E,w}$ coincides isometrically with the Lorentz space $\Lambda_{E',w}$, where $E'$ is the K\"othe dual space of $E$.   The proof of this fact is very similar to that given in the setting of Orlicz-Lorentz spaces in \cite{KR}. As a corollary we obtain that if the weight $w$ is regular, and $E$ has Fatou property, then the K\"othe dual to $\Lambda_{E,w}$ is equal as a set to $M_{E',w}$, with  equivalence of their respective norm and gauge.

In section \ref{sec:Q} we introduce the class $Q_{E,w}$ consisting of all  elements of $L^0(I)$ which are submajorized by some element of $M_{E,w}$. It is easy to verify that $Q_{E,w}$ is an  ideal in $L^0(I)$, which is clearly hereditary by submajorization and contains $M_{E,w}$. The formula
\begin{equation}\label{eq:def-Q-norm} 
\|f\|_{Q_{E,w}}=\inf\{ \|g\|_{M_W}: f\prec g\},
\end{equation}
where the symbol $\prec$ denotes the Hardy-Littlewood submajorization, defines a very natural gauge on $Q_{E,w}$ which turns out to be a norm. Equipped with this norm, $Q_{E,w}$ is a fully symmetric Banach function space, the smallest one  containing $M_{E,w}$. Moreover its K\"othe dual space coincides isometrically with $\Lambda_{E',w}$.

If $E$ has Fatou property one may exchange the roles of $E$ and $E'$, thus $(Q_{E',w})'=\Lambda_{E,w}$, and $(\Lambda_{E,w})'= (Q_{E',w})''$. For deriving our final duality result that $(\Lambda_{E,w})'= Q_{E',w}$, we need to know that $Q_{E',w}$ has Fatou property, and thus equals to its second K\"othe dual. This is shown in section \ref{sec:Q}. A general proof of the latter fact does not seem easy without knowledge of a minimizer  $g$ for the right hand side of the equation (\ref{eq:def-Q-norm}) defining the $Q_{E,w}$ norm of an element $f$. In fact Halperin's level function $f^0$ of the decreasing rearrangement $f^*$ is such a minimizer, in other words we prove that  $\|f\|_{Q_{E,w}}=\|f^0\|_{M_W}$.

At this point we should remark that the path followed here differs from that in \cite{KR}, where the spaces $Q_{\ph,w}$ were not introduced. Instead we initiated there another scale of spaces $P_{\ph,w}$, the analogue of which we define and discuss now.

In section \ref{sec:P} we define the class $P_{E,w}$ consisting of the union of all classes $M_{E,v}$, for all positive decreasing weights $v$ submajorized by $ w$.  This class is equipped with the gauge
\[\|f\|_{P_{E,w}}=\inf\{ \|f\|_{M_{E,v}}: v, 0<v\prec w \}.\]
Contrary to the case of Orlicz-Lorentz spaces, we did not find direct evidence that these classes are linear and these gauges are norms. In the present paper this fact is proven indirectly, at least if $E$ has Fatou property, by showing that $P_{E,w}=Q_{E,w}$, with equality of gauges.

Finally  we obtain three different formulas of the norm in the dual K\"othe space  to  Lorentz space $\Lambda_{E,w}$. In fact we have that for $f\in (\Lambda_{E,w})'$,
 \begin{equation}\label{eq:00} \hskip-10pt
\|f\|_{(\Lambda_{E,w})'} =\inf \{\|g\|_{M_{E',w}}: f\prec g\} = \inf\{\|f\|_{M_{E',v}}: v\prec w, v> 0, v\downarrow\} = \|f^0\|_{M_{E',w}}.
\end{equation}

Let us mention that the expression of the dual norm on  $(\Lambda_{E,w})'$  given in terms of the level function  by equation (\ref{eq:00}) is implicit in Sinnamon's work \cite{S01} (see Theorem 2.2 and Corollary 2.4 there), as it appears clearly once the relationship between Sinnamon's level functions and Halperin's ones has been elucidated like in \cite[p. 64]{FLM}. Our methods however are different and the two infimal expressions in (\ref{eq:00})  seem to be new. 

If $E=L_\varphi$ is an Orlicz space then $\Lambda_{E,w}$ is an Orlicz-Lorentz space $\Lambda_{\varphi,w}$ \cite{KR09, KR}, and we obtain that the norm in its  dual space is expressed in three different ways following from equalities (\ref{eq:00}). But in section \ref{sec:OL}  we consider $L_\ph(J)$ as a modular function space \cite{M}, equipped with its natural convex modular 
\[I_\ph(f)=\int_J \ph(|f|)\,dm.\]
Then the Orlicz-Lorentz space  $\Lambda_{\ph,w}$ inherits of a modular structure defined by the convex modular
$\Phi_w(f)= \int_I\ph(f^*)w\,dm$, while the class $M_{\ph,w}$ is equipped with the (non-convex) modular $\mathrm M_w(f) : =  \int_I \ph\left({f^*\over w}\right)w\,dm$. Set
\[
\mathrm P(f)= \inf\{M_v(f): v\prec w, v> 0, v\downarrow\} \hbox{ and }  \mathrm Q(f)=  \inf\{M_w(g): f\prec g\}.
 \]
These formulas define convex modulars on $L^0$, the associated modular spaces of which coincide with the space $P_{\ph,w}=Q_{\ph,w}$. The modular $\mathrm P$ was introduced in \cite {KR} and further studied in \cite{KLR}, where it was proved that $\mathrm P(f)=\mathrm M_w(f^0)$ under the additional hypothesis that $\ph$ is an $N$-function.  In section \ref{sec:OL} we show that $Q(f)=M_w(f^0)$ (without any hypothesis on $\ph$). Combined with the preceding result of \cite{KLR} we obtain that $Q(f)=P(f)=M_w(f^0)$ if $\ph$ is an $N$-function. 

At the end of section \ref{sec:Lorentz} for generalized Lorentz spaces, as well as in the final section \ref{sec:examples}  for dual spaces, we discuss a number of examples where the space $E$ is more specified. In particular if  $E$ is itself a classical Lorentz space  it turns out that $\Lambda_{E,w}$ is another Lorentz space.


\section{Preliminaries}

 Let $\mu$ be a measure defined on a $\sigma$-algebra $\mathcal{A}$ of subsets of $\Omega$ and $L^0(\Omega,\mathcal{A}, \mu)$ be the set of  all classes of $\mu$-measurable real valued functions  on $\Omega$, modulo equality almost everywhere, and let $L_+^0(\Omega,\mathcal{A}, \mu)$ be the cone of all non-negative functions from $L^0(\Omega,\mathcal{A}, \mu)$. Since  in this article $\Omega$ will typically be an interval of the real line and $\mu$ a measure equivalent to the Lebesgue measure $m$, there will be no ambiguity in the shorter notation $L^0(\Omega)$,  where $\mathcal{A}$ will be implicitly the algebra of Lebesgue-measurable sets. Similarly the space of bounded measurable functions will be unambiguously denoted by $L_\infty(\Omega)$. As for the spaces $L_p(\Omega, \mathcal{A}, \mu)$, $1\le p<\infty$, they depend on $\mu$ and should be denoted by $L_p(\Omega, \mu)$, or simply $L_p(I)$ when $\Omega$ is an interval $I$ and $\mu$ is the Lebesgue measure. When there is no ambiguity, the usual symbols $L_p$ and $L_\infty$ stand for the spaces of $p$-integrable functions and essentially bounded functions on $\Omega$, respectively. Their norms are denoted by $\|f\|_p$ for $f\in L_p$ and $\|f\|_\infty$ for $f\in L_\infty$.

A subset $S$ of $L^0(\Omega)$ is called {\it solid} if for any $f\in L^0(\Omega)$ and $g\in S$ with $|f|\le |g|$ a.e. we have $f\in S$.  An  {\it ideal} in $L^0(\Omega)$ is a solid linear subspace.  {\em A Banach function space} $E$ over $(\Omega, \mathcal{A}, \mu)$, is an ideal in $L^0(\Omega)$, equipped with a monotone  norm $\|\cdot\|_E$, that is  $\|f\|_E \le \|g\|_E$ whenever $|f| \le |g|$ a.e., $f,g\in E$, complete with respect to this norm, and with full support (no element in $L_0$, except 0, is disjoint from all elements in $E$).  The Banach function space $E$ satisfies the {\em Fatou property} whenever for any $f\in L^0(\Omega)$, $f_n \in E$ such that $f_n \uparrow f$ a.e. and $\sup_n \|f_n\|_E < \infty$ it follows that $f\in E$ and $\|f_n\|_E \uparrow \|f\|_E$. We say that $E$ is {\it order continuous} whenever for every sequence $(f_n)\subset E$ with $f_n\downarrow 0$ a.e. we have $\|f_n\|_E \downarrow 0$.
 
 For any $f\in L^0(\Omega)$, we will use the notation $\{f>s\}$ for the set $\{t\in \Omega: f(t) > s\}$ , where the symbol "$>$" can be replaced by $<$, $\le $ or $\ge$.  Throughout the whole paper the terms increasing or decreasing  are reserved for non-decreasing or non-increasing, respectively.
 Given $f\in L^0(\Omega)$, the {\em distribution} of $f$ with respect to $\mu$ is the function $d^\mu_f(s) = \mu\{ |f| > s\}$, $s\ge 0$, and its {\em decreasing rearrangement} $f^{*,\mu}(t) = \inf\{s>0:  d^\mu_f(s)\le t\}$, $t\in [0, \mu(\Omega))$.  Given two measure spaces $(\Omega_i,\mathcal{A}_i,\mu_i)$, $i=1,2$,
 we say that $f_i\in L^0(\Omega_i)$ are {\it equimeasurable}  if $d^{\mu_1}_{f_1}(s) = d^{\mu_2}_{f_2}(s)$, $s\ge 0$, which  equivalently means that   $f_1^{*,\mu_1}= f_2^{*,\mu_2}$. 
  
  A Banach function space  $E$ over $(\Omega, \mathcal{A}, \mu)$ is called a {\em symmetric} space whenever $\|f\|_E= \|g\|_E$ for every equimeasurable functions $f,g\in E$. Recall that the {\em fundamental function} of a symmetric space $E$ is $\phi_E(t) = \|\chi_{A}\|_E$, $\mu(A) =t$, $t\in [0,\mu(\Omega))$. We say that the {\em support} of the symmetric space $E$ is the entire set $\Omega$ whenever $\chi_A \in E$ for any $A \in \mathcal{A}$ with $\mu(A) < \infty$.
  
  The Hardy-Littlewood order  $f\prec_\mu g$ for locally integrable $f,g \in L^0(\Omega)$  is defined   by the inequality $\int_0^x f^{*,\mu} \, dm\le \int_0^x g^{*,\mu}\,dm$ for every $x\in (0,\mu(\Omega))$. If $\Omega = (0,a)$, $a\le \infty$, and $\mu=m$ one writes simply $f\prec g$. Clearly $f\prec_\mu g$ if and only if  $ f^{*,\mu}\prec g^{*,\mu}$.   Recall that $(f+g)^{*,\mu} \prec f^{*,\mu} + g^{*,\mu}$.
  We call   $E$ a {\em fully symmetric} space if $E$ is symmetric and if for any $f\in L^0(\Omega)$ and $g\in E$ with  $f\prec_\mu g$ we have that $f\in E$ and $\|f\|_E\le \|g\|_E$.

The {\em K\"othe dual }space $E'$ of a Banach function space $E$ is the collection of all measurable functions  $f\in L^0(\Omega)$ such that
\[
\|f\|_{E'} = \sup\left\{\int_\Omega |fg|\,d\mu: \|g\|_E \le 1\right\} <\infty.
\]
The space $E'$ equipped with the norm $\|\cdot\|_{E'}$ is a complete Banach function space satisfying the Fatou property. If $E$ is order continuous then the dual space $E^*$ equals the K\"othe dual $E'$, in the sense that the only functionals in $E^*$ are the maps $f\mapsto \int_\Omega fg\, d\mu$, $g\in E'$. If in addition $E$ is a symmetric  space then $E'$ is fully symmetric, and 
 \[
\|f\|_{E'} = \sup\left\{\int_0^{\mu(\Omega)} f^{*,\mu}g^{*,\mu}\,dm: \|g\|_E \le 1\right\}.
\]
For the theory of Banach function and symmetric spaces we refer to the excellent books \cite{BS, KPS, Z}.  

 Given $f,g\in L^0(\Omega)$ denote $f\wedge g = \min\{f, g\}$ a.e., $f\vee g = \max\{f,g\}$ a.e., $f_+ = f\vee 0$ a.e. and $f_- = -f\vee 0$ a.e..  By $m$ denote always the Lebesgue measure on  subsets of real numbers $\mathbb{R}$. Recall that for $f\in L_1+L_\infty(\Omega)$, $ x\in (0,\mu(\Omega))$,
  \begin{align}\label{eq:1}
\int_0^x f^{*,\mu}\,dm &=\inf\{\|g\|_1+x\|h\|_\infty: g\in L_1, h\in L_\infty, f=g+h\} \\
& = \inf_{\lambda>0}\, \left[ \int_\Omega (|f|-\lambda)_+\, d\mu+ \lambda x\right]\notag
\end{align}
(see e.g. Theorem 6.2 in  \cite[Ch. 2]{BS} and its proof; Exercise 1 on p. 87). It is well known (cf. Proposition 1.8, p.43, \cite{BS}) that for any $0<p<\infty$,
\[
\int_\Omega |f|^p \, d\mu = \int_0^{\mu(\Omega)} (f^{*,\mu})^p\, dm,
\]
in which formula we can replace  $|f|^p$ by $\varphi(|f|)$ where $\varphi:\mathbb{R_+} \to \mathbb{R_+}$ is any increasing continuous function.

 Let $I=(0,a)$, where $0<a\le \infty$, and $L^0 = L^0(I)$ be  the space of all real valued Lebesgue measurable functions on $I$.
If $\Omega = I$ and $\mu = m$ then the distribution and decreasing rearrangement of a measurable function $f$ are denoted by $d_f$ and $f^*$, respectively. The support of $f$ is denoted by $\mathrm{supp}\,f$.

Let us recall a useful connection between a measurable function and its decreasing rearrangement. Let $f$ be a measurable function on $I$ and $f^*$ be its decreasing rearrangement.
\begin{proposition}\label{prop:ryff}\hfill
\begin{itemize}
\item[(i)] \cite[Ryff's  Theorem 7.5]{BS}  If $a<\infty$, or if $\mathrm{supp}\,f$ has finite measure, there exists an onto and measure preserving transformation $\tau: I\to I$, that is $\tau$ is measurable and $m(\tau^{-1}(A))=m(A)$ for each measurable subset of $I$, such that $|f| = f^*\circ \tau$.

\item [(ii)]  \cite[Corollary 7.6]{BS}  If $\mathrm{supp}\,f$ has infinite measure, and $\lim_{t\to \infty} f^*(t)=0$, then such a measure preserving transformation $\tau$ exists but only from $\mathrm{supp}\,f$ onto the support of $f^*$. The equation  $|f|(t) = f^*\circ \tau(t)$ is valid for  $t\in \mathrm{supp}\,f$. 
\end{itemize}
\end{proposition}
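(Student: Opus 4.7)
The plan is to reduce to the case $f\ge 0$ by replacing $f$ with $|f|$, and then construct $\tau$ by matching level sets of $|f|$ with the corresponding ``plateaus'' of $f^*$, exploiting the identity $m(\{|f|>s\})=m(\{f^*>s\})=d_f(s)$. Since $f^*$ is right-continuous and decreasing, one has the explicit description $\{f^*>s\}=(0,d_f(s))$ and $\{f^*\ge s\}=[0,d_f(s^-))$, so that $\{f^*=s\}$ is the interval $[d_f(s),d_f(s^-))$ whose length equals the jump of $d_f$ at $s$, which in turn equals $m(\{|f|=s\})$.

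The key measure-theoretic ingredient I would invoke is the classical isomorphism theorem: any two measurable subsets $A,B\subset \mathbb{R}$ of equal finite measure admit a measure-preserving bijection modulo null sets (in fact, any two atomless standard probability spaces are measure-isomorphic). Applying this over the countable family of atoms of $d_f$ yields $\tau$ on the union of the ``atomic'' level sets $\{|f|=s\}$, each sent measure-preservingly onto the interval $[d_f(s),d_f(s^-))$. On the complementary non-atomic part of $\mathrm{supp}\,f$, the candidate map is $t\mapsto d_f(|f|(t))$ composed with a measure-preserving rearrangement inside each fibre; its measure-preserving character follows because the pushforward of Lebesgue measure by $|f|$, restricted to this stratum, is the Lebesgue--Stieltjes measure associated with $-d_f$, and the latter is transported by $d_f$ onto Lebesgue measure on the non-atomic part of $\mathrm{supp}\,f^*$.

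For part (i), once $\tau$ has been built from $\mathrm{supp}\,f$ onto $\mathrm{supp}\,f^*$, the hypothesis that $a<\infty$ or $m(\mathrm{supp}\,f)<\infty$ implies that $I\setminus \mathrm{supp}\,f$ and $I\setminus \mathrm{supp}\,f^*$ have the same measure, so a further measure-preserving bijection between these two complements extends $\tau$ to a measure-preserving transformation $I\to I$. For part (ii), when $m(\mathrm{supp}\,f)=\infty$ and $\lim_{t\to\infty}f^*(t)=0$, these complements may have different (possibly infinite) measures, so no such extension is possible and $\tau$ remains defined only on $\mathrm{supp}\,f$; the hypothesis on the limit is precisely what guarantees that every level set $\{|f|>s\}$ with $s>0$ has finite measure and hence can participate in the above construction. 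The main obstacle is the non-atomic construction: measurability and the global measure-preserving property of the glued map are subtle to verify by hand, and the cleanest route is to invoke the atomless Borel isomorphism theorem rather than attempting to define $\tau$ by a closed-form pointwise formula.
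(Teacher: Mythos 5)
The paper offers no proof of this proposition at all: it is quoted verbatim from Bennett--Sharpley (Theorem 7.5 and Corollary 7.6 of Chapter 2), so the only comparison available is with the classical argument you are reconstructing. Your outline is essentially the standard proof of Ryff's theorem and I see no false step in it: the identities $\{f^*>s\}=(0,d_f(s))$ and $m\{f^*=s\}=d_f(s^-)-d_f(s)=m\{|f|=s\}$ are correct (granting $d_f(s)<\infty$ for $s>0$, which is exactly what the hypotheses of (i) and (ii) guarantee, as you note), the atomic level sets are correctly matched to the plateaus of $f^*$ by the atomless isomorphism theorem, and on the non-atomic stratum the fibres of $t\mapsto d_f(|f|(t))$ are null, so no further rearrangement is actually needed and measure preservation reduces to the pushforward identity $(d_f)_*\bigl(|f|_*m\bigr)=m$ on that stratum, which you state correctly. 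Your accounting of the complements in (i) (both have measure $a-d_f(0^+)$ when $a<\infty$, both infinite and $\sigma$-finite when $a=\infty$ and $m(\mathrm{supp}\,f)<\infty$) and of why no extension exists in (ii) is also right. The one thing worth knowing is that the gluing you describe as the ``main obstacle'' can be avoided entirely: Ryff's and Bennett--Sharpley's proof defines $\tau$ by the single explicit formula $\tau(t)=d_f(|f(t)|)+m\bigl(\{s\le t: |f(s)|=|f(t)|\}\bigr)$, whose measurability and measure-preserving character are checked directly on sets $\{\tau\le c\}$; this treats the atomic and non-atomic strata uniformly and sidesteps the Borel isomorphism theorem. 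So: correct in substance, slightly heavier machinery than necessary, and in any case the paper itself delegates the whole statement to the literature.
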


We shall need to consider a third case that we settle as follows.

\begin{lemma}\label{lem:Ryff-3}
Let $I=(0,\infty)$ and $f$ be a measurable function in $I$ such that $\lim_{t\to \infty} f^*(t)=\alpha>0$. Then for each $\eps>0$ there exists an onto and measure preserving transformation $\tau: I\to I$ such that $|f|\le (1+\eps)\, f^*\circ\tau$. 
\end{lemma}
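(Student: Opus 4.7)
The plan is to isolate the part of $|f|$ lying above its limit $\alpha$, apply Ryff's theorem (Proposition \ref{prop:ryff}(i)) to this truncation, and absorb the residual part into the factor $1+\eps$ using the uniform lower bound $f^*\ge \alpha$ (which holds because $f^*$ is non-increasing with limit $\alpha$). Explicitly, set $A:=\{|f|>\alpha(1+\eps)\}$ and $g:=|f|\chi_A$. Since $\alpha(1+\eps)>\alpha=\lim_{t\to\infty}f^*(t)$, the distribution function satisfies $T:=m(A)=d_f(\alpha(1+\eps))<\infty$, so $\mathrm{supp}\,g$ has finite measure. Proposition \ref{prop:ryff}(i) then produces an onto, measure-preserving map $\tau:I\to I$ with $g=g^*\circ\tau$ a.e.

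Next I would identify $g^*$ as the truncation $f^*\chi_{[0,T)}$. Direct computation yields $\{g>s\}=\{|f|>s\}$ for $s\ge \alpha(1+\eps)$ and $\{g>s\}=A$ for $0\le s<\alpha(1+\eps)$, so $d_g=\min(d_f,T)$. Inverting through the definition $g^*(t)=\inf\{s>0:d_g(s)\le t\}$, and using that $d_f(s)\ge T$ for $s\le\alpha(1+\eps)$ (by monotonicity of $d_f$), one obtains $g^*(t)=f^*(t)$ for $t\in [0,T)$ and $g^*(t)=0$ for $t\ge T$.

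It then remains to verify the inequality on each of $A$ and $I\setminus A$. On $A$, $g(t)>\alpha(1+\eps)>0$ forces $g^*(\tau(t))>0$, hence $\tau(t)\in [0,T)$ and $g^*(\tau(t))=f^*(\tau(t))$, giving the exact equality $|f|(t)=f^*(\tau(t))$. On $I\setminus A$, $|f|(t)\le \alpha(1+\eps)$ while $f^*(\tau(t))\ge \alpha$, so $|f|(t)\le \alpha(1+\eps)\le (1+\eps)\,f^*(\tau(t))$. Combining both cases gives $|f|\le (1+\eps)(f^*\circ\tau)$ a.e., as required. There is no deep obstacle; the only care needed is in the bookkeeping for $g^*$ when $d_f$ has jumps or plateaus near $\alpha(1+\eps)$, and in the degenerate case $m(A)=0$ (where $|f|\le\alpha(1+\eps)$ a.e.\ and one may take $\tau=\mathrm{id}$, the second estimate alone doing the job).
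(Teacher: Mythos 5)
Your proof is correct and follows essentially the same strategy as the paper's: truncate $|f|$ at the level $(1+\eps)\alpha$ to obtain a function with support of finite measure, apply Ryff's theorem (Proposition \ref{prop:ryff}(i)) to that truncation, and absorb everything below the cutoff using the bound $f^*\ge\alpha$. The paper realizes the truncation as $(|f|-(1+\eps)\alpha)_+$ and adds the constant back, whereas you use $|f|\chi_{\{|f|>(1+\eps)\alpha\}}$ together with a two-case estimate; the difference is purely cosmetic.
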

\begin{proof}
Set $\tilde f= |f|\vee (1+\eps)\alpha$. Note that $(\tilde f)^* = f^*\vee (1+\eps)\alpha\le (1+\eps)\,f^*$. Since $m\{|f|\ge (1+\eps)\alpha\} < \infty$, by Ryff's theorem  we may find an onto measure preserving transformation $\tau: I\to I$ such that
$(|f|-(1+\eps)\alpha)_+=(f^*-(1+\eps)\alpha)_+\circ \tau$. Adding the constant $(1+\eps)\alpha$ to both sides yields  $\tilde f=(\tilde f)^*\circ \tau$. Then\medskip

{}\hfill$ |f|\le \tilde f= (\tilde f)^*\circ\tau \le (1+\eps)\,f^*\circ\tau$.
\end{proof}

 We will assume throughout the paper that $w: I\to (0,\infty)$ is a decreasing positive {\em weight} function.
    Then $d\,\omega=wdm$ is a measure on $I$ such that $\omega(A) = \int_A w\, dm$ for Lebesgue measurable subsets
  $A\subset I$.   The symbols $d_f^w$ and $f^{*,w}$ will be reserved for  the distribution and decreasing rearrangement of $f$ respectively,  with respect to the measure $\omega$.  Define
   \[
  W(t) = \int_0^t w\, dm, \ \ \ \  t\in I, \ \ \ W(\infty) =\int_0^\infty w \, dm\ \ \ \ \text{if} \ \ \ \ I=(0,\infty). 
\]    
  Let further  $b=\omega(I) =W(a) \in (0,+\infty]$  and $J = (0,b)$. The interval $J$ will be always equipped with the Lebesgue measure $m$. It may happen that $a=\infty$ and $b<\infty$ if $w$   is integrable on $I$, or that $a<\infty$ and $b=\infty$ if $w$ is not integrable near $0$. If the weight $w$ is integrable near $0$, it is integrable on any finite interval,  and then
clearly $W(t) < \infty$ for all $t\in I$.   We say that the weight $w$ is {\em regular} if $W(t)\le C tw(t)$ for some $C\ge 1$ and all $ t\in I$.

Throughout the paper   the symbol $E$ will always stand for a fully symmetric Banach function space contained in $L^0(J)$ with its support equal to $J$.

 \section{Lorentz spaces $\Lambda_{E,w}$}\label{sec:Lorentz}
 
 \subsection{Spaces $E_w$}\label{subsec:21}

Given a fully symmetric space $E\subset L^0(J)$, let $E_w$ be the subset of $L^0=L^0(I)$ and $\|\cdot\|_{E_w}$ the functional on $E_w$ such that
\[
E_w = \{ f\in L^0: f^{*,w}\in E\}, \ \ \ \|f\|_{E_w} = \|f^{*,w}\|_E, \ \  f\in E_w.
\]
The space $E_w$ is a fully symmetric space on $I$ for the measure $\omega$. 
Note that if $f\in L^0(I)$ then $f^{*,w} \in L^0(J)$.  If $E= L_p(J)$, $1\le p < \infty$, then $E_w = (L_p)_w$ is traditionally called a weighted $L_p$ space on $I$, which is not symmetric with respect to the measure $m$. However this is  an ordinary $L_p$-space on $(I,\omega)$ in the sense that  for  $f\in E_w = (L_p)_w$ we have \cite[Proposition 1.8, p.43]{BS}
\[ 
\int_J (f^{*,w})^p dm= \int_J (|f|^p)^{*,w} dm = \int_I |f|^p d\omega = \int_I |f|^p wdm,
\]
so that  $\|f\|_{(L_p)_w} = (\int_I |f|^p w dm)^{1/p}$. Clearly it is symmetric with respect to the measure $\omega$. 

Let $\varphi:[0,\infty)\to [0,\infty)$ be an Orlicz function, that is $\varphi(0)=0$, $\varphi$ is convex and positive on $(0,\infty)$. Then for $f\in L^0(J)$ define the Orlicz modular as 
\[
I_\varphi(f)= \int_J \varphi(|f|)\, dm,
\]
and the Orlicz space $L_\varphi(J)$ \cite{BS} as a collection of $f\in L^0(J)$ such that for some $\lambda > 0$, 
$I_\varphi(|f|/\lambda)<\infty.$
 It is a Banach fully symmetric space equipped with either of the norms, the Luxemburg  norm $\|f\|_\varphi = \inf\{\lambda > 0: I_\varphi(|f|/\lambda) \le 1\}$ or the Orlicz norm $\|f\|^0_\varphi = \inf_{t>0}t(1 + I_\varphi(f/t))$.  Analogously as for $L_p$-spaces, if  $E = L_\varphi(J)$ then $E_w = (L_{\varphi})_w$
 is a weighted Orlicz space symmetric with respect to the measure  $\omega$, associated with the Orlicz modular
 \[ 
\int_J \varphi(f^{*,w})\,dm=\int_I \varphi(|f|)\, w dm.
 \]

\begin{remark}\label{rem:21}
The space $E_w$ over $(I, \omega)$ where $d\omega = wdm$ can be called a generalized weighted space induced by the space $E$ over $(J,m)$ and the weight $w$ on $I$.  {\it In general, $E_w$ is a  Banach function space in $L^0(I)$ which is non symmetric with respect to the Lebesgue measure but isometrically order isomorphic to $E$ on $(J,m)$. } 

This is a simple consequence of a general theorem of Caratheodory on isomorphisms of separable atomless measure algebras \cite[Chap. 15, Theorem 4]{Ro}, but a far more elementary proof may be given in the present case.

Indeed, there exists a bijective, bimeasurable map $S:I\to J$ which is measure preserving i.e. $m(S(A))=\omega(A)$ for all measurable $A\subset I$. This result follows from general facts in measure theory, but such a map will be explicitly exhibited below. Since for every $f\in L^0(I)$ and $t>0$ we have $\{|f\circ S^{-1}|>t\}=S\{|f|>t\}$ we see that 
\begin{align}\label{eq:distrib}
d_{f\circ S^{-1}}(t)= m(|f\circ S^{-1}|>t)=m(S\{|f| > t\}) = \omega(|f|>t)=d^w_f(t).
\end{align} 
Hence $(f\circ S^{-1})^*= f^{*,\omega}$. Thus $f\in E_w$ if and only if $f\circ S^{-1}\in E$ and $\|f\|_{E_w}= \|f^{*w}\|_E =\|(f\circ S^{-1})^*\|_E =\|f\circ S^{-1}\|_E$. 
The map  $T:L^0(I)\to L^0(J):f\mapsto f\circ S^{-1}$  is a linear order isomorphism, so that  $E_w=T^{-1}(E)$ must be an ideal of $L^0(I)$. The restriction of  $T$ to $E_w$ is the wished Banach lattice isometry.

Now for the sake of constructing a map $S$ as requested in the preceding paragraph, we consider two cases.

a) If $W<\infty$ on $I$, then $W$ is a bijective, bimeasurable, measure preserving  map from $(I,\omega)$ onto $(J,m)$, so that we may set $S=W$. 

Indeed, since $w>0$ is integrable on every finite segment $(0,x)\subset I$, the map $W$ is a homeomorphism from $I$ onto $W(I)=J$. The pushforward measure of $\omega$ by $W$ is $\omega\circ W^{-1}=m$, the Lebesgue measure, as can be seen easily on intervals $[x,y]\subset I$,
\[
\omega(W^{-1}([x,y]))=\omega([W^{-1}(x),W^{-1}(y)])=\int_{W^{-1}(x)}^{W^{-1}(y)} w\,dm= y-x=m([x,y]),
 \]
it follows that $m(W(A))=\omega(A)$ for all measurable $A\subset I$.

b) If $W(t)=\infty$ for $t>0$ we choose $\alpha\in I=(0,a)$, and  set $W_\alpha(t)= \int_\alpha^t w\, dm$ for $t\in I$. Letting $c=\int_\alpha ^a w\, dm$, and $K=(-\infty,c)$,  $W_\alpha$ is a  bijective, bimeasurable, measure preserving  map from $(I,\omega)$ onto $(K, m)$. It is then a standard exercise to exhibit a bijective, bimeasurable, measure preserving map $U$ from $(K, m)$ onto $(J,m)$, and we set $S=U\circ W_\alpha$.
\end{remark}

Since the case $W<\infty$ is  the main one considered in this article, except in sections \ref{sec:an-inequality} and \ref{sec:classes-M},  we collect the preceding information relative to this case in the following proposition.

\begin{proposition} \label{prop:S=W}
Assume that $W<\infty$ on $I$. Then

\rm{(i)} Every $f\in L^0(I)$ is equimeasurable with respect to $\omega$ to $f\circ W^{-1}\in L^0(J)$ with respect to $m$. Consequently, 
\[
(f\circ W^{-1})^*= f^{*,w}.
\] 

\rm{(ii)} $f\in L^0(I)$ belongs to $E_w$ if and only if $f\circ W^{-1}$ belongs to $E$, and then 
\[
\|f\|_{E_w}=\|f\circ W^{-1}\|_E.
\]
 Consequently, $E_w$ is an  ideal in $L^0(I)$, it is  fully symmetric for the measure $d\omega = wdm$, and the map $f\mapsto f\circ W^{-1}$ induces an order isometry from $E_w$ onto $E$.
\end{proposition}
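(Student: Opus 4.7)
The plan is to specialize the construction of Remark \ref{rem:21} to the case $W<\infty$ by taking $S=W$ explicitly. The whole proposition will follow once $W\colon I\to J$ is shown to be a bijective, bimeasurable, measure-preserving map from $(I,\omega)$ onto $(J,m)$.

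First I would verify that $W$ is a homeomorphism. Since $w>0$ is locally integrable (as $W(t)<\infty$ for all $t\in I$), $W$ is continuous and strictly increasing from $0$ to $b$, hence a bijective bimeasurable map from $I$ onto $W(I)=(0,b)=J$. To show that $W$ pushes $\omega$ forward to $m$, I would first compute on half-open intervals $[x,y]\subset I$:
\[
\omega([x,y])=\int_x^y w\,dm = W(y)-W(x)= m(W([x,y])),
\]
and then extend to all Lebesgue-measurable subsets of $I$ by a standard monotone-class (or outer-regularity) argument. Equivalently, $m(A)=\omega(W^{-1}(A))$ for every Borel $A\subset J$.

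For (i), once $W$ is measure preserving, for any $f\in L^0(I)$ and $t>0$ I compute
\[
\{|f\circ W^{-1}|>t\}=W\{|f|>t\},
\]
so $d_{f\circ W^{-1}}(t)=m(W\{|f|>t\})=\omega\{|f|>t\}=d^w_f(t)$. Passing to generalized inverses of these distribution functions yields $(f\circ W^{-1})^{*}=f^{*,w}$ as claimed.

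For (ii), from the definition $\|f\|_{E_w}=\|f^{*,w}\|_E$ and part (i), we get $f\in E_w$ iff $(f\circ W^{-1})^{*}\in E$, which, because $E$ is symmetric with full support, is equivalent to $f\circ W^{-1}\in E$, with equality of norms. The map $T\colon L^0(I)\to L^0(J)$, $T f=f\circ W^{-1}$, is a linear bijection (inverse given by $g\mapsto g\circ W$) that preserves the pointwise order a.e., so $E_w=T^{-1}(E)$ is automatically an ideal of $L^0(I)$ and $T\restriction_{E_w}$ is an order-preserving isometry onto $E$. Full symmetry of $E_w$ with respect to $\omega$ is inherited from that of $E$: if $f\prec_\omega g$ with $g\in E_w$, then by (i) $(Tf)^*=f^{*,w}\prec g^{*,w}=(Tg)^*$, so $Tf\prec Tg$, giving $Tf\in E$ with $\|Tf\|_E\le\|Tg\|_E$. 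The only mildly non-routine step is the measure-preservation on arbitrary measurable sets rather than intervals, but this is standard and poses no real obstacle.
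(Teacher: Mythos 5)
Your proposal is correct and follows essentially the same route as the paper: the paper's Remark \ref{rem:21} (case a) takes $S=W$, checks that $W$ is a bijective, bimeasurable, measure-preserving map from $(I,\omega)$ onto $(J,m)$ by first computing $\omega(W^{-1}([x,y]))=m([x,y])$ on intervals, derives the identity of distribution functions $d_{f\circ W^{-1}}=d^w_f$, and then obtains the ideal property and the order isometry exactly as you do. No gaps; the monotone-class extension you flag is indeed the only non-immediate step and is standard.
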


\subsection{Generalized Lorentz spaces}
Define now the {\em Lorentz space} $\Lambda_{E,w}$  as the symmetrization of $E_w$ \cite{KR}, that is 
\[
\Lambda_{E,w} = \{f \in L^0(I):  f^*\in E_w\}, \ \ \ \ \|f\|_{\Lambda_{E,w}} = \|f^*\|_{E_w}.
\]

If $W(t)=\infty$ for $t>0$, then $J= (0,\infty)$ and if $f$ is a decreasing nonnegative function in $L^0(I)$, then  $d_f^w=\infty\cdot\chi_{[0,f(0_+)]}$ and  $f^{*,w}= f(0_+) \cdot\chi_J$.  It follows  that $\Lambda_{E,w}=\{0\}$ except if $E$ contains the function $1$, in which case $\Lambda_{E,w}=L_\infty(I)$.

For the rest of this section we disregard the above degenerate case and assume that $W <\infty$ on $I$.

For the Orlicz space $E=L_\ph(J)$,  $\Lambda_{E,w}$   is  the  Orlicz-Lorentz space $\Lambda_{\ph,w}$, defined in \cite{KR}, that is  $\|f\|_{\Lambda_{\varphi,_w}} = \|f^*\|_{{(L_\varphi)}_w}$. If $\varphi(t)= {t^p}$, $1\le p < \infty$, then  $\Lambda_{E,w} = \Lambda_{p,w}$ \cite{CRS, Ha53}. 

If $E=L_\infty(J)$ then $E_w= L_\infty(I)= \Lambda_{E,w}$.

Other examples are given at the end of the present section.

\begin{proposition}\label{prop:2}
Let $W<\infty$ on $I$.
\begin{itemize}
\item[\rm(i)] The support of $\Lambda_{E,w}$ is $I$.
\item[\rm(ii)] For all $f \in \Lambda_{E,w}$,
 \[
  \|f\|_{\Lambda_{E,w}} = \|f^*\circ W^{-1}\|_E.
 \] 
 \item[\rm(iii)] The functional $\|\cdot\|_{\Lambda_{E,w}}$ is a norm, and the Lorentz space $\Lambda_{E,w}$ is a fully symmetric Banach space. If $E$ has  Fatou property   then $\Lambda_{E,w}$ has also this property.  If $E$ is order continuous then $\Lambda_{E,w}$ is also order continuous.
\end{itemize}
\end{proposition}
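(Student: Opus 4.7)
The plan is to derive (i) and (ii) directly from Proposition~\ref{prop:S=W} and then transfer the Banach function space axioms from $E$ to $\Lambda_{E,w}$ through the (nonlinear) rearrangement map $f\mapsto f^*\circ W^{-1}$.

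For (i), observe that if $A\subset I$ has finite measure, then $\chi_A^*=\chi_{(0,m(A))}$, and Proposition~\ref{prop:S=W}(i) gives $(\chi_A^*)^{*,w}=\chi_A^*\circ W^{-1}=\chi_{(0,W(m(A)))}$. Since $W<\infty$ on $I$ and $E$ has full support on $J$, this indicator belongs to $E$, hence $\chi_A\in\Lambda_{E,w}$. For (ii), apply Proposition~\ref{prop:S=W}(ii) to the function $f^*\in L^0(I)$ to get $\|f\|_{\Lambda_{E,w}}=\|f^*\|_{E_w}=\|f^*\circ W^{-1}\|_E$.

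The pivotal step for (iii) is a transfer principle for submajorization: if $h_1,h_2\ge 0$ are decreasing on $I$ and $h_1\prec h_2$, then $h_1\circ W^{-1}\prec h_2\circ W^{-1}$ on $J$. Indeed, the change of variables $s=W(t)$ yields
\[
\int_0^y h_i\circ W^{-1}(s)\,ds=\int_0^{W^{-1}(y)} h_i(t)\,w(t)\,dt,
\]
and Hardy's lemma, applicable because $w$ is decreasing, upgrades $\int_0^x h_1\,dm\le \int_0^x h_2\,dm$ to $\int_0^x h_1 w\,dm\le \int_0^x h_2 w\,dm$ for every $x\in I$. Combined with the full symmetry of $E$ and identity (ii), this principle yields both the triangle inequality (apply with $h_1=(f+g)^*$, $h_2=f^*+g^*$, using the classical $(f+g)^*\prec f^*+g^*$) and the full-symmetry of $\Lambda_{E,w}$ (apply with $h_1=f^*$, $h_2=g^*$ when $f\prec g$). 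Monotonicity of the norm under $|f|\le|g|$ is immediate from $f^*\le g^*$, and the remaining norm axioms are trivial.

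Completeness then follows from the symmetrization framework of~\cite{KR09}: Proposition~\ref{prop:S=W}(ii) realizes $E_w$ as a Banach function space on $(I,\omega)$, and $\Lambda_{E,w}$ is precisely its $m$-symmetrization. For the Fatou property, if $f_n\uparrow f$ a.e.\ with $\sup_n\|f_n\|_{\Lambda_{E,w}}<\infty$, the standard fact $f_n^*\uparrow f^*$ a.e.\ combined with (ii) produces $f_n^*\circ W^{-1}\uparrow f^*\circ W^{-1}$ a.e.\ on $J$ with bounded $E$-norm, and the Fatou property of $E$ delivers $f\in\Lambda_{E,w}$ with $\|f_n\|_{\Lambda_{E,w}}\uparrow\|f\|_{\Lambda_{E,w}}$. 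Order continuity is handled analogously via $f_n\downarrow 0\Rightarrow f_n^*\downarrow 0$ and the order continuity of $E$. The main obstacle is the transfer principle for submajorization, which turns on Hardy's lemma and the monotonicity of $w$; once it is in place, every remaining property of $\Lambda_{E,w}$ is a clean pull-back from $E$ via formula (ii).
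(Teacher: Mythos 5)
Your proposal is correct and follows essentially the same route as the paper: part (i) via the indicator computation, part (ii) via Proposition \ref{prop:S=W} applied to $f^*$, and part (iii) via the change of variables $s=W(t)$ together with Hardy's lemma (using that $w$ is decreasing) to show $h_1\prec h_2 \Rightarrow h_1\circ W^{-1}\prec h_2\circ W^{-1}$ for decreasing $h_1,h_2$, which the paper likewise uses to get both full symmetry and, via $(f+g)^*\prec f^*+g^*$, the triangle inequality. The completeness, Fatou, and order-continuity arguments also match the paper's (the paper cites \cite[Lemma 1.4]{KR} for completeness of the symmetrization, which is the same device you invoke).
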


\begin{proof} $\rm (i)$ Let $A\subset I$ with $m(A) < \infty$. Then $W(m(A))< \infty$ and
\[
\|\chi_A\|_{\Lambda_{E,w}} = \|\chi_{(0,m(A))}\|_{E_w} =\|\chi^{*,w}_{(0,m(A))}\|_E = \|\chi_{(0,W(m(A)))}\|_E < \infty
\]
 since by assumption the support of $E$ is $J$.

$\rm(ii)$
 In view of $w > 0 $ on $I$, the function $W:I\to J$ is a strictly  increasing homeomorphism.  By Proposition \ref{prop:S=W}  the functions $f$ for $\omega$ and $f\circ W^{-1}$ for $m$  are equimeasurable, that is $d^w_f = d_{f\circ W^{-1}}$. 
 So $f^{*,w}=(f\circ W^{-1})^*$ and hence 
 \[
\|f\|_{\Lambda_{E,w}} =  \|f^*\|_{E_w} = \|(f^*)^{*,w}\|_E = \|f^*\circ W^{-1}\|_E.
 \]
$\rm(iii)$
For $f\in L_1+L_\infty$ and $g\in \Lambda_{E,w}$ with $f\prec g$, and $x\in J$ we have
\[
\int_0^x f^*\circ W^{-1}\,dm=\int_0^{W^{-1}(x)} f^*w\,dm\le \int_0^{W^{-1}(x)} g^* w\,dm = \int_0^x g^*\circ W^{-1}\,dm
\]
by  Hardy's inequality \cite[Proposition 3.6, Ch.2]{BS}. Hence $f^*\circ W^{-1}\prec g^*\circ W^{-1}\in E$ and so by the assumption of full symmetry of $E$  and by (ii) we get $f^*\circ W^{-1}\in E$, hence $f\in \Lambda_{E,w}$, and
\[
\|f\|_{\Lambda_{E,w}}=\|f^*\circ W^{-1}\|_E\le \|g^*\circ W^{-1}\|_E=\|g\|_{\Lambda_{E,w}}.
\]
Now if $f,g\in \Lambda_{E,w}$ we have $f^*,g^*\in E_w$, hence $f^*+g^*\in E_w$, which means that $f^*+g^*\in \Lambda_{E_w}$. Moreover $\|f^*+g^*\|_{\Lambda_{E_w}}= \|f^*+g^*\|_{E_w}\le \|f^*\|_{E_w}+\|g^*\|_{E_w}=\|f^*\|_{\Lambda_{E_w}}+\|g^*\|_{\Lambda_{E_w}}$. Then by the well known submajorization  $(f+g)^*\prec f^*+g^*$ \cite[Theorem 3.4]{BS}, it follows from the preceding observation that $f+g\in \Lambda_{E,w}$ and 
\[\|f+g\|_{\Lambda_{E,w}}\le \|f^*+g^*\|_{\Lambda_{E,w}}\le \|f^*\|_{\Lambda_{E,w}}+\|g^*\|_{\Lambda_{E,w}}=\|f\|_{\Lambda_{E,w}}+\|g\|_{\Lambda_{E,w}}\]
 Therefore $\|\cdot\|_{\Lambda_{E,w}}$ is a fully symmetric norm.

The  normed function space $\Lambda_{E,w}$ is complete since it is a symmetrization of the complete space $E_w$ \cite[Lemma 1.4]{KR}.

Suppose now that $E$ has the Fatou property. Take $f_n, f\in L^0(I)$, $f_n\uparrow f$ a.e., and $\sup\|f_n\|_{\Lambda_{E,w}}<\infty$.  Then $f^*_n \circ W^{-1} \uparrow f^* \circ W^{-1}$ a.e.,  and by (ii) $\sup\|f^*_n\circ W^{-1}\|_E = \sup  \|f_n\|_{\Lambda_{E,w}} < \infty$. Now by the Fatou property of $E$, $ f^*\circ W^{-1} \in E$ so $f\in \Lambda_{E,w}$, and  $\|f_n\|_{\Lambda_{E,w}} = \|f^*_n \circ W^{-1}\|_E \uparrow \|f^*\circ W^{-1} \|_E =\|f\|_{\Lambda_{E,w}}$.  The statement on order continuity of $\Lambda_{E,w}$ can be proved analogously.
\end{proof}

\subsection*{Applications.} Proposition \ref{prop:2}(ii) allows to compute some Lorentz spaces.

\begin{example}[Reiteration]\label{ex:reiter} {\it Let $w_1,w_2$ be two  locally integrable decreasing positive weights on $I_1=(0,a_1)$, resp. $I_2 =(0,W_1(a_1))$, where $W_1(x) =\int_0^x w_1\, dm$ for $x\in I_1$,  and $W_2(x)=\int_0^x w_2\, dm$ for $x\in I_2$. For every symmetric space $E$ on $J=(0,b)$, $b=W_2(W_1(a_1))$, it holds that $\Lambda_{\Lambda_{E,w_2},w_1} =\Lambda_{E,w}$ with equal norms, where $w=(w_2\circ W_1)w_1$}.
\end{example}
\begin{proof}
For  $f\in L^0(I_1)$ we have $f\in \Lambda_{\Lambda_{E,w_2},w_1}$ if and only if $f^*\circ W_1^{-1}\in \Lambda_{E,w_2}(I_2)$, that is if $(f^*\circ W_1^{-1})\circ W_2^{-1}$ belongs to $E$.  Setting $W=W_2\circ W_1$, $W$ is an increasing concave function with a derivative defined almost everywhere by $W'= (w_2\circ W_1)w_1 =: w$,  which is a decreasing weight on $I_1$. Then  $f\in \Lambda_{\Lambda_{E,w_2},w_1}$ if and only if $f^*\circ W^{-1}\in E$, that is $f\in \Lambda_{E,w}$. The fact that both norms coincide is straightforward.
\end{proof}

For definition of the Marcinkiewicz space $M_W$ see section \ref{sec:Q}.

\begin{example}[Marcinkiewicz-Lorentz spaces] {\it Let $I_1,I_2,w_1,w_2$ be as in Example \ref{ex:reiter} and $M_{W_2}(I_2)$ be the  Marcinkiewicz space associated with the weight $w_2$. Then the Lorentz space $\Lambda_{M_{W_2},w_1}$ consists of   $f\in L^0(I_1)$ such that }
\[
\|f\|:=\sup_{x\in I_1}\frac1{W_2\circ W_1(x)}\int_0^xf^* w_1\,dm <\infty.
\]
\end{example}
\begin{proof}
Clearly $f\in \Lambda_{M_{W_2},w_1}$ if and only if  $f^*\circ W_1^{-1}\in M_{W_2}(I_2)$, that is 
\[ \|f^*\circ W_1^{-1}\|_{M_{W_2}}=\sup_{t\in I_2} \frac 1{W_2(t)}\int_0^t f^*\circ W_1^{-1}(s)\,ds <\infty.
\]
The result follows by performing first  the substitution for $W^{-1}_1(s)$ in the integral, then the change $t=W_1(x)$ in the supremum.
\end{proof}

Recall  if $(E, \|\cdot\|_E)$ and $(F, \|\cdot\|_F)$ are two fully symmetric Banach function spaces over the same interval $J$, then the Banach function spaces $E\cap F$ and $E+F$ equipped with the standard norms $\|f\|_{E\cap F} = \max\{\|f\|_E, \|f\|_F\}$ and $\|f\|_{E+ F} = \inf\{\|f_1\|_E + \|f_2\|_F: f=f_1 + f_2, f_1\in E, f_2\in F\}$ respectively, are also fully symmetric. This is evident for the intersection space $E\cap F$, while for the sum space $E+F$ it is an immediate consequence of the following decomposition property for the submajorization.

\begin{fact}\label{fact:decomp}
 If $f, g_1,g_2\in L^0_+$  are locally integrable with $f\prec g_1+g_2$ then there is a decomposition $f=f_1+f_2$ into  non-negative functions such that $f_1\prec g_1$ and  $f_2\prec g_2$.
\end{fact}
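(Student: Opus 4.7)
The plan is to invoke the Calderón interpolation theorem for $L_1$--$L_\infty$. Recall that for non-negative $f,g\in L_1+L_\infty$, the relation $f\prec g$ is equivalent to the existence of a positive linear operator $T\colon L_1+L_\infty\to L_1+L_\infty$, simultaneously contractive on $L_1$ and on $L_\infty$, such that $Tg=f$ (see for instance \cite[Ch.~2, Thm.~4.7]{BS} and the subsequent positivity refinement). I would apply the forward direction to $f\prec g_1+g_2$ to secure such an operator $T$ with $T(g_1+g_2)=f$, and then define $f_i:=Tg_i$ for $i=1,2$.

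The bulk of the verification is then immediate: $f_i\ge 0$ follows from the positivity of $T$ applied to $g_i\ge 0$; linearity gives $f_1+f_2=T(g_1+g_2)=f$; and the converse direction of Calderón's theorem applied to the double contraction $T$ (every positive simultaneous $L_1$- and $L_\infty$-contraction sends any $h\in L^0_+$ to $Th\prec h$) yields $f_i=Tg_i\prec g_i$, as required.

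The delicate point is producing a \emph{positive} operator $T$, since the bare Calderón statement only ensures boundedness. A standard workaround is to pass first to decreasing rearrangements: from the elementary submajorization $(g_1+g_2)^*\prec g_1^*+g_2^*$ (a direct consequence of the formula $\int_0^x h^*=\sup_{|A|=x}\int_A h$ for $h\in L^0_+$) one obtains $f^*\prec g_1^*+g_2^*$, so it suffices to decompose $f^*=h_1+h_2$ with $h_i\ge 0$ and $h_i\prec g_i^*$. Such a decomposition is produced by a positive ``averaging'' construction on step functions and a weak-$*$ limit, which avoids the positivity issue entirely. Finally, I would lift the decomposition from $f^*$ back to $f$ by means of a measure preserving transformation $\tau$ satisfying $f=f^*\circ\tau$ as provided by Proposition~\ref{prop:ryff}, or by Lemma~\ref{lem:Ryff-3} in the borderline case $\lim_{t\to\infty}f^*(t)>0$; setting $f_i:=h_i\circ\tau$ yields non-negative functions summing to $f$, each equimeasurable with $h_i$, so that $f_i\prec g_i$ is inherited from $h_i\prec g_i^*$.
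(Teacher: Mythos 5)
Your main argument is exactly the paper's: the paper derives this fact in one line from Calder\'on's characterization of submajorization by \emph{substochastic} operators ($f\prec g$ iff $|f|=T|g|$ for some substochastic $T$), and then sets $f_i=Tg_i$, just as you do. Your worry about positivity is already resolved by the references the paper cites (\cite[Theorem II-3.4]{KPS}, \cite[Chap.~3, Proposition 2.4 and Theorem 2.10]{BS}), where the operator is substochastic and hence positive by definition, so your rearrangement-based workaround — whose key decomposition step you only sketch, and whose lifting via Lemma~\ref{lem:Ryff-3} would anyway give only an inequality rather than the exact splitting $f=f_1+f_2$ — is not needed.
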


This fact is an easy consequence of the well known characterization of submajorization by Calder\'on, namely that $f\prec g$ if and only if there exists a substochastic linear operator $T$ such that $|f|=T|g|$ (\cite[Theorem II-3.4]{KPS}, or \cite[Chap.3, Proposition 2.4 and Theorem 2.10]{BS}).

In the following example we shall use a monotone version of Fact \ref{fact:decomp}, that is based on a monotone refinement of Calder\'on's theorem by Bennett and Sharpley \cite[Theorem 5]{BS2}, \cite[Remark 7.6, Theorem 7.7]{BS} (see also \cite[\S 3]{L} for a different proof), i.e. if $f,g$ are non-negative  locally integrable and decreasing functions, such that $f\prec g$ then $f=Tg$ for some positive substochastic operator $T$ {\it which preserves the cone of decreasing  non-negative functions}. Thus we obtain. 
\begin{fact}\label{fact:decomp2}
 {\it If $f,g_1,g_2$ are non-negative  decreasing locally integrable functions with $f\prec g_1+g_2$ then there is a decomposition $f=f_1+f_2$ into  non-negative  decreasing functions such that $f_1\prec g_1$, $f_2\prec g_2$.}
\end{fact}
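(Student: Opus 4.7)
The plan is to apply the monotone refinement of Calderón's theorem directly to the sum $g_1+g_2$. Since $g_1$ and $g_2$ are non-negative decreasing locally integrable functions, so is $g:=g_1+g_2$, and by hypothesis $f\prec g$. By the Bennett--Sharpley theorem cited just above the statement, there exists a positive substochastic linear operator $T$ on $L_1+L_\infty$ such that $f=Tg$ and such that $T$ maps the cone of non-negative decreasing functions into itself.

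The decomposition then writes itself: set $f_1:=Tg_1$ and $f_2:=Tg_2$. By linearity of $T$ we have $f_1+f_2=T(g_1+g_2)=Tg=f$. Positivity of $T$ together with $g_1,g_2\ge 0$ gives $f_1,f_2\ge 0$. Because $g_1,g_2$ are non-negative and decreasing, and $T$ preserves this cone, the functions $f_1,f_2$ are non-negative and decreasing as well. Finally, it is a standard property of positive substochastic operators (Calderón's theorem in the reverse direction, cf.\ \cite[Chap.~3, Prop.~2.4]{BS}) that $Th\prec h$ for every non-negative locally integrable $h$; applied to $h=g_1$ and $h=g_2$ this yields $f_1\prec g_1$ and $f_2\prec g_2$.

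There is essentially no obstacle here beyond having the right tool available: the non-monotone Fact~\ref{fact:decomp} already follows from the usual Calderón representation, and the only extra ingredient needed to get the monotone version is that the substochastic operator furnished by Bennett and Sharpley for a pair of decreasing functions preserves monotonicity. Once the same operator $T$ is applied to both $g_1$ and $g_2$, the decreasing character of the pieces $f_1,f_2$ is automatic.
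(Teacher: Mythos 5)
Your argument is correct and is precisely the one the paper intends: the sentence preceding the statement introduces the Bennett--Sharpley monotone refinement of Calder\'on's theorem exactly so that one can write $f=Tg$ with $g=g_1+g_2$ and set $f_i=Tg_i$, using linearity, positivity, preservation of the decreasing cone, and the substochastic property $Th\prec h$. You have simply spelled out the details that the paper leaves implicit after ``Thus we obtain.''
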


\begin{example}[Intersections and sums] {\it Let $E, F$ be fully symmetric Banach function spaces defined on the same interval $J$, and $w$ a locally integrable decreasing positive weight on $I$ with $W(I)=J$. Then $\Lambda_{{E\cap F},w} = \Lambda_{E,w}\cap \Lambda_{F,w}$ and $\Lambda_{{E+ F},w} = \Lambda_{E,w}+ \Lambda_{F,w}$ with equality of norms.}
 \end{example}

\begin{proof}
 The formula for the Lorentz space of an intersection is straightforward, so we treat only the sum case. 
 
From $E\subset E+F$, with norm-decreasing inclusion it follows immediately that $\Lambda_{E,w}\subset \Lambda_{{E+ F},w}$, with norm-decreasing inclusion. Similarly $\Lambda_{F,w}\subset \Lambda_{{E+ F},w}$, and thus $\Lambda_{E,w}+ \Lambda_{F,w}\subset \Lambda_{{E+ F},w}$. Moreover  this inclusion is norm-decreasing.

As for the converse inclusion, let $f\in\Lambda_{E+ F,w}$. We have $f^*\circ W^{-1}\in E+F$, hence for any $\eps>0$ there are $g\in E, h\in F$ such that $f^*\circ W^{-1}=g+h$ and $\|g\|_E+\|h\|_F\le (1+\eps) \|f^*\circ W^{-1}\|_{E+F}$. Then $f^*\circ W^{-1}\prec g^*+h^*$, and by  of Fact \ref{fact:decomp2} there exist decreasing  non-negative   functions $g_1,h_1$ such that 
 \[g_1\prec g^*, h_1\prec  h^*\hbox{ and } f^*\circ W^{-1}= g_1+h_1.\]
We have then $g_1\in E$ and $h_1\in F$. Setting $k=g_1\circ W$, $l=h_1\circ W$, we have $f^* = k+l$.   Since $k,l$ are  non-negative decreasing and $k\circ W^{-1}\in E$, $l\circ W^{-1}\in F$, we have $k\in \Lambda_{E,w}$, $l\in \Lambda_{F,w}$ with $\|k\|_{\Lambda_{E,w}}=\|g_1\|_E\le \|g\|_E$, $\|l\|_{\Lambda_{F,w}}=\|h_1\|_F\le \|h\|_F$.  It follows $f\in \Lambda_{E,w}+\Lambda_{F,w}$ with 
\[\|f\|_{\Lambda_{E,w}+\Lambda_{F,w}}\le \|g\|_E+\|h\|_F\le (1+\eps)\|f^*\circ W^{-1}\|_{E+F} =(1+\eps) \|f\|_{\Lambda_{E+F, w}}\]
\end{proof}

\section{An inequality for rearrangements of functions and weights}\label{sec:an-inequality}

Let $v\in L_+^0= L_+^0(I)$, $I=(0,a)$. It defines a measure $d\nu=vdm$ on $I$ in the usual way by setting $\nu(A) = \int_A v\, dm$, where  $A\subset I$ is Lebesgue measurable. If $f\in L^0$ then by  $f^{*,v}$ we denote the decreasing rearrangement of $f$ with respect to the measure $\nu$. This is a decreasing function on the interval $J_v:=(0,\nu(I))$. Clearly 
$f=\chi_{\{v>0\}}\,f$ $\nu$-a.e., so $f^{*,v}=(\chi_{\{v>0\}} f)^{*,v}$.  If $v$ has a rearrangement $v^*$ such that  $v^* = w$, then we have 
 \begin{equation}\label{eq:31}
 \nu(I)=\int_I v = \int_I v^*= \int_I w = \omega(I)=b,
 \end{equation}
  and so $J_v=(0,b) = J$ does not depend on $v$ in that case. If $E$ is a symmetric space on  $J$ then $E_v$ is defined as in the case of a decreasing weight by $f\in E_v\iff f^{*,v}\in E$, where $f^{*,v}$ is the decreasing rearrangement of $|f|$ relative to the measure $\nu$. Then again, $E_v$ is a symmetric Banach function space on   $I$ equipped with the measure $\nu$, which is order-isometric to $E$.

If $\mathrm{supp}\, f \subset \mathrm{supp}\, v$ then we agree that $(f/v)(t) = 0$ for $t\notin \mathrm{supp}\, f$. 

\begin{theorem}\label{H-L}
Let $v\in L_+^0$ be such that $v^* = w$.  Assume $f\in L_1+ L_\infty (I)$ with $\mathrm{supp}\,f\subset \mathrm{supp}\,v$. Then
\[
 \left(\frac{f^*}w\right)^{*,w} \prec \left(\frac fv\right)^{*,v}.
 \]
In particular if $f/v\in E_v$ then $f^*/w\in E_w$ and $\|f^*/w\|_{E_w}\le \|f/v\|_{E_v}$.
\end{theorem}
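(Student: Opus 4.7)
The plan is to verify the submajorization by checking $\int_0^x(f^*/w)^{*,w}\,dm\le\int_0^x(f/v)^{*,v}\,dm$ for every $x\in J=(0,b)$. I apply formula (\ref{eq:1}) to the measures $d\omega=w\,dm$ and $d\nu=v\,dm$ on $I$, whose total masses both equal $b$ by (\ref{eq:31}). This gives
\[
\int_0^x(f^*/w)^{*,w}\,dm=\inf_{\lambda>0}\Bigl[\int_I(f^*/w-\lambda)_+\,w\,dm+\lambda x\Bigr],
\]
and an analogous expression with $(|f|/v,v)$ in place of $(f^*/w,w)$ for the right-hand side. Since $w>0$ everywhere, the integrand $(f^*/w-\lambda)_+\,w$ simplifies to $(f^*-\lambda w)_+$; since $\mathrm{supp}\,f\subset\mathrm{supp}\,v$, the integrand $(|f|/v-\lambda)_+\,v$ simplifies to $(|f|-\lambda v)_+$. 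The desired submajorization thus reduces, for each $\lambda>0$, to the inequality $\int_I(f^*-\lambda w)_+\,dm\le\int_I(|f|-\lambda v)_+\,dm$. Setting $a=|f|$ and $b=\lambda v$ (so $a^*=f^*$ and $b^*=\lambda w$), this is the special case of the pure rearrangement inequality
\[
\int_I(a^*-b^*)_+\,dm\ \le\ \int_I(a-b)_+\,dm, \qquad a,b\in L^0_+(I).
\]

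To establish this, first assume $a\in L_1(I)$, so $\int a=\int a^*<\infty$. The identity $(x-y)_+=x-(x\wedge y)$ then yields
\[
\int_I(a-b)_+\,dm-\int_I(a^*-b^*)_+\,dm=\int_I(a^*\wedge b^*)\,dm-\int_I(a\wedge b)\,dm,
\]
so it suffices to show $\int(a\wedge b)\le\int(a^*\wedge b^*)$. By the layer-cake formula,
\[
\int_I(a\wedge b)\,dm=\int_0^\infty m(\{a>s\}\cap\{b>s\})\,ds\le\int_0^\infty\min(d_a(s),d_b(s))\,ds.
\]
On the other hand, since $a^*$ and $b^*$ are decreasing, $\{a^*>s\}$ and $\{b^*>s\}$ are initial segments and their intersection has measure exactly $\min(d_a(s),d_b(s))$; hence $\int(a^*\wedge b^*)\,dm$ equals the right-hand side above. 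For general $a\in L^0_+$, choose an exhaustion $(I_n)$ of $I$ by subsets of finite measure and set $a_n=(a\wedge n)\chi_{I_n}\in L_1$; since $a_n\uparrow a$ implies $a_n^*\uparrow a^*$, the inequality extends by monotone convergence.

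Combining these observations yields the submajorization $(f^*/w)^{*,w}\prec(f/v)^{*,v}$, and the norm estimate $\|f^*/w\|_{E_w}\le\|f/v\|_{E_v}$ follows immediately from the full symmetry of $E$. I expect the main obstacle to be the auxiliary rearrangement inequality $\int(a^*-b^*)_+\le\int(a-b)_+$; once it is in hand, everything else amounts to bookkeeping with formula (\ref{eq:1}) and the pointwise simplifications noted above.
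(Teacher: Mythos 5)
Your proposal is correct and follows essentially the same route as the paper: reduce via formula (\ref{eq:1}) to the inequality $\int_I(f^*-\lambda w)_+\,dm\le\int_I(|f|-\lambda v)_+\,dm$, and derive that from the rearrangement inequality $\int(a^*-b^*)_+\le\int(a-b)_+$, whose proof rests on the same key fact that the distribution of $a\wedge b$ is dominated by $\min(d_a,d_b)=d_{a^*\wedge b^*}$ (the paper's Lemma \ref{rearrangement-of-inf}). The only cosmetic difference is in the bookkeeping for the key lemma: you use the global identity $\int(a-b)_+-\int(a^*-b^*)_+=\int(a^*\wedge b^*)-\int(a\wedge b)$ under the assumption $a\in L_1$ and then truncate in both value and support, whereas the paper works with partial integrals $\int_0^t$ for bounded $a$ and lets $t\uparrow a$; both limiting arguments are sound.
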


We prove first two lemmas.

\begin{lemma}\label{rearrangement-of-inf}
For any $f,g\in L_+^0$ we have $(f\wedge g)^*\le f^*\wedge g^*$.
\end{lemma}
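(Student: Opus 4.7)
The statement is a pointwise inequality between decreasing rearrangements, and my plan is to reduce it immediately to the monotonicity of the rearrangement operation under the pointwise order on $L^0_+$. The key observation is that $f\wedge g\le f$ and $f\wedge g\le g$ almost everywhere, so everything should follow from the standard fact that $h_1\le h_2$ a.e.\ implies $h_1^*\le h_2^*$ everywhere on the interval of definition.

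First I would verify (or just recall from the preliminaries) that pointwise a.e.\ domination implies pointwise domination of the decreasing rearrangements. This is transparent from the distribution function description: if $0\le h_1\le h_2$ a.e., then $\{h_1>s\}\subset \{h_2>s\}$ modulo null sets, so $d_{h_1}(s)\le d_{h_2}(s)$ for every $s\ge 0$, whence $h_1^*(t)=\inf\{s>0:d_{h_1}(s)\le t\}\le \inf\{s>0:d_{h_2}(s)\le t\}=h_2^*(t)$ for every $t$.

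Applying this with $h_1=f\wedge g$ and $h_2=f$, and then with $h_2=g$, yields the two pointwise inequalities
\[
(f\wedge g)^*\le f^*\qquad\text{and}\qquad (f\wedge g)^*\le g^*
\]
everywhere on $(0,a)$. Taking the pointwise minimum on the right gives $(f\wedge g)^*\le f^*\wedge g^*$, which is the desired inequality. There is no real obstacle here; the only thing worth being careful about is that when $a=\infty$, both rearrangements live on the same interval $(0,a)$, so the pointwise minimum on the right is well defined, and the argument does not require any integrability assumption on $f$ or $g$.
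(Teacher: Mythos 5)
Your proof is correct. It takes a mildly different route from the paper's: you apply the standard monotonicity of $h\mapsto h^*$ twice (to $f\wedge g\le f$ and $f\wedge g\le g$) and then take the pointwise minimum of the two resulting upper bounds, so you never need to say anything about the function $f^*\wedge g^*$ itself. The paper instead works entirely at the level of distribution functions in a single chain: it shows $d_{f\wedge g}(s)\le d_{f^*\wedge g^*}(s)$ for all $s$, the key extra observation being that $\{f^*>s\}$ and $\{g^*>s\}$ are intervals anchored at $0$, so that $m\{f^*>s\}\wedge m\{g^*>s\}=m(\{f^*>s\}\cap\{g^*>s\})=d_{f^*\wedge g^*}(s)$; one then concludes because $f^*\wedge g^*$ is decreasing and right-continuous, hence equal to its own rearrangement. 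Your argument is slightly more economical since it avoids that identification and any appeal to the structure of the level sets of rearrangements; the paper's version has the minor virtue of exhibiting the inequality directly as a comparison of distribution functions. Both are complete, and your remark that no integrability hypothesis is needed is accurate.
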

\begin{proof}
First notice that $m(\{f^*>s\}\cap\{g^*>s\})= m\{f^* > s\}\wedge  m\{ g^* > s\} $, $s\ge 0$,  since the sets $\{f^*>s\}$ and $\{g^*>s\}$ are two intervals with the same lower bound $0$.
Thus we have 
\begin{align*}
d_{f\wedge g} (s) &= m\{f\wedge g >s \} = m(\{f>s\}\cap \{g>s\})\\
&\le m\{f>s\}\wedge m\{g>s\}
=m\{f^*>s\}\wedge m\{g^*>s\}\\
&=m(\{f^*>s\}\cap\{g^*>s\})=m\{f^*\wedge g^* >s\} = d_{f^*\wedge g^*}(s),
\end{align*}
which implies $(f\wedge g)^* \le f^* \wedge g^*$. 
\end{proof}
\begin{lemma}\label{comparison}
For every  $f,g\in L_+^0$ such that $f^*, g^* < \infty$, it holds
\[
\int_I(f^*-g^*)_+dm\le \int_I (f-g)_+dm.
\]
\end{lemma}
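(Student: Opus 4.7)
The core idea is to exploit the pointwise identity $h = (h-k)_+ + h \wedge k$ valid for non-negative $h, k$, applied to both $(f,g)$ and $(f^*, g^*)$, and then compare the integrals using equimeasurability together with the preceding Lemma \ref{rearrangement-of-inf}.

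First I would treat the case $f, g \in L_1(I)$. Integrating $f = (f-g)_+ + (f \wedge g)$ and $f^* = (f^*-g^*)_+ + (f^* \wedge g^*)$ and subtracting, while using $\int_I f\,dm = \int_I f^*\,dm$, gives
\[
\int_I (f-g)_+\,dm - \int_I (f^*-g^*)_+\,dm = \int_I (f^* \wedge g^*)\,dm - \int_I (f \wedge g)\,dm.
\]
By Lemma \ref{rearrangement-of-inf}, $(f \wedge g)^* \le f^* \wedge g^*$, hence $\int_I (f \wedge g)\,dm = \int_I (f \wedge g)^*\,dm \le \int_I (f^* \wedge g^*)\,dm$, so the right-hand side is non-negative. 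This proves the claim in the integrable case.

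For the general case, where $f, g$ have finite rearrangements but need not be integrable, I would reduce to the previous one by truncation. Set $f_n := (f \wedge n)\chi_{(0,n)}$ and $g_n := (g \wedge n)\chi_{(0,n)}$, which lie in $L_1 \cap L_\infty$. A short case analysis on the relative sizes of $f, g$ and $n$ shows $(f_n - g_n)_+ \le (f-g)_+$ pointwise on $I$. Since $f_n \uparrow f$ and $g_n \uparrow g$ a.e., one has $f_n^* \uparrow f^*$ and $g_n^* \uparrow g^*$ (monotone convergence of distribution functions), hence $(f_n^* - g_n^*)_+ \to (f^* - g^*)_+$ pointwise. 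Combining the integrable case with Fatou's lemma yields
\[
\int_I (f^*-g^*)_+\,dm \le \liminf_n \int_I (f_n^* - g_n^*)_+\,dm \le \liminf_n \int_I (f_n - g_n)_+\,dm \le \int_I (f-g)_+\,dm,
\]
which is the desired inequality.

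The argument is essentially a one-liner at the level of integrands, relying on the identity $(f-g)_+ = f - (f \wedge g)$, equimeasurability, and Lemma \ref{rearrangement-of-inf}. The only mild obstacle is that one cannot subtract $\int f$ from $\int f^*$ when both are infinite, which is why the truncation step and an application of Fatou's lemma are needed to reach full generality.
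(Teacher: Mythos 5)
Your proof is correct. It shares the paper's two key ingredients --- the pointwise identity $(h-k)_+ = h - h\wedge k$ and Lemma \ref{rearrangement-of-inf} --- but organizes the comparison differently. The paper works with the partial integrals $\int_0^t$ for each $t\in I$: after applying Lemma \ref{rearrangement-of-inf} it invokes the submajorization $f^*\prec (f-f\wedge g)^* + (f\wedge g)^*$ to convert $\int_0^t\bigl(f^*-(f\wedge g)^*\bigr)$ into $\int_0^t[(f-g)_+]^*$, assuming only that $f$ is bounded, and then lets $t\uparrow a$ and removes the boundedness by monotone convergence with $f_n=f\wedge n$ (leaving $g$ untouched). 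You instead compare only the total integrals, replacing the subadditivity of the rearrangement by the equimeasurability identity $\int_I f=\int_I f^*$; this makes the integrable case a genuine one-liner and even yields the clean identity $\int_I(f-g)_+ - \int_I(f^*-g^*)_+ = \int_I(f^*\wedge g^*) - \int_I(f\wedge g)^*\ge 0$, at the price of needing $f\in L_1$ rather than merely bounded, hence a truncation of both functions in value and support and an appeal to Fatou (the convergence $(f_n^*-g_n^*)_+\to(f^*-g^*)_+$ is not monotone, and your use of the hypothesis $f^*,g^*<\infty$ here is exactly where it is needed). Both routes are elementary and of comparable length; the paper's version has the side benefit of establishing the stronger pointwise submajorization $\int_0^t(f^*-g^*)_+\le\int_0^t[(f-g)_+]^*$ for bounded $f$ along the way, while yours isolates more cleanly why the full-integral inequality is really just Lemma \ref{rearrangement-of-inf} in disguise.
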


\begin{proof}
We assume first that $0\le f$ is bounded. Note that
\[
(f-g)_+= f- f\wedge g \qquad (f^*-g^*)_+= f^*- f^*\wedge g^*.
\]
Then by Lemma \ref{rearrangement-of-inf}  we have for every $t\in I $,
\[
\int_0^t (f^*-g^*)_+dm=\int_0^t(f^*- f^*\wedge g^*)dm\le\int_0^t (f^*-(f\wedge g)^*))dm.
\]
But since $f^*\prec (f-f\wedge g)^*+(f\wedge g)^*$ and  $\int_0^t (f\wedge g)^*<\infty$ by boundedness of $f$,
\[
\int_0^t (f^*-(f\wedge g)^*))dm= \int_0^t f^*dm-\int_0^t(f\wedge g)^*dm\le \int_0^t(f-f\wedge g)^*dm=\int_0^t[(f-g)_+]^*dm.
\]
Therefore  for every $t\in I= (0,a)$,
\[
\int_0^t (f^* - g^*)_+ \le \int_0^t [(f - g)_+]^*.
\]
Letting $t\uparrow a$ we obtain
\[
\int_0^a (f^*-g^*)_+\,dm\le \int_0^a[(f-g)_+]^*dm=\int_0^a (f-g)_+\,dm.
\]
If $0\le f$ is not bounded, letting $f_n=f\wedge n$, $n\in \mathbb{N}$, we get $f_n^*\uparrow f^*$ a.e. and thus $(f_n^* - g^*)_+ \uparrow (f^* - g^*)_+$ a.e. as well as $(f_n - g)_+\uparrow (f-g)_+$ a.e.. Now by the monotone convergence theorem, 
\[
\int_I (f^* - g^*)_+ \, dm = \lim_{n\to\infty}\int_0^a (f_n^*-g^*)_+\,dm\le \lim_{n\to\infty}\int_0^a (f_n-g)_+\,dm = \int_I (f-g)_+\, dm.
\]
\end{proof}

\begin{remark}
Using Lemma \ref{rearrangement-of-inf} and Lorentz-Shimogaki inequality \cite[Chapter 3, Theorem 7.4]{BS} for rearrangements, we obtain in fact the more powerful result
\[
(f^*-g^*)_+\prec (f-g)_+.
\]
Indeed since  $f\ge f\wedge g$, Lorentz-Shimogaki's theorem gives $f^* - (f\wedge g)^* \prec f - f\wedge g$ and 
\[
(f^*-g^*)_+=f^*-f^*\wedge g^*\le f^*-(f\wedge g)^*\prec f - f\wedge g =  (f-g)_+.
\]
However Lemma \ref{comparison}, which requires only quite elementary ingredients in its proof, will suffice for our purpose. 
\end{remark}

\begin{proof}[Proof of Theorem \ref{H-L}]
By Lemma  \ref{comparison}, for every $\lambda>0$ we have
\begin{align*}
\int_I \left(\frac{f^*}w-\lambda\right)_+ w\, dm
&= \int_I \left({f^*}-\lambda w\right)_+ \,dm
= \int_I (f^* - (\lambda v)^*)_+\, dm\\
&\le \int_I (|f| - \lambda v)_+ \, dm 
= \int_I \left(\frac{|f|}{v} - \lambda\right)_+ v \, dm.
\end{align*}
Now in view of the equality (\ref{eq:1}), for any $x\in J$,
\begin{align*}
\int_0^x \left(\frac{f^*}{w}\right)^{*,w} \,dm &= \inf_{\lambda>0}\left[\int_I\left(\frac{f^*}{w} - \lambda\right)_+ wdm + \lambda x \right]\\
&\le \inf_{\lambda>0}\left[\int_I\left(\frac{|f|}{v} - \lambda\right)_+ v\,dm + \lambda x \right]
= \int_0^x \left(\frac{f}v\right)^{*,v} \,dm,
\end{align*}
and the proof is completed.
\end{proof}

\begin{proposition}\label{lem:equivalence}
Let  $f \in L^0$ have a finite decreasing rearrangement $f^*$. If $I$ is a finite interval $(0,a)$, or $I= (0, \infty)$ with $\lim_{t\to\infty}f^*(t)= 0$, then there exists  $v\in L_+^0$ such that 
\[
v^*=w, \ \ \ \mathrm{supp}\, v\supset \mathrm{supp}\, f \ \ \text{and}\ \ \  \left(\frac{f^*}{w}\right)^{*,w} = \left(\frac{f}{v}\right)^{*,v}.
\]
 If $I=(0,\infty)$ and $\lim\limits_{t\to\infty}f^*(t)>0$ then for every $\eps>0$ there exists $0<v\in L^0$ such that 
 \[
v^* = w \ \ \ \text{and}\ \ \ \ \left(\frac{f}{v}\right)^{*,v}\le (1+\eps) \left(\frac{f^*}{w}\right)^{*,w}.
 \]
\end{proposition}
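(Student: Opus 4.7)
The strategy is to construct $v$ in the form $v = w \circ \tau$ for a measure-preserving bijection $\tau : I \to I$ chosen so that $|f| = f^* \circ \tau$ (or, in the last case, $|f| \le (1+\varepsilon)\,f^* \circ \tau$). Once $\tau$ is in hand, $v^* = w$ is automatic (because $w$ is already decreasing and $\tau$ is measure-preserving), and the equality of rearrangements in the conclusion will reduce to a routine change of variables.

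To produce $\tau$: if $a < \infty$, or if $\mathrm{supp}\,f$ has finite measure, Proposition~\ref{prop:ryff}(i) directly furnishes $\tau : I \to I$ onto and measure-preserving with $|f| = f^* \circ \tau$. The remaining part of the first case is $I = (0,\infty)$ with $\mathrm{supp}\,f$ of infinite measure and $f^*(t) \to 0$ as $t \to \infty$; here Proposition~\ref{prop:ryff}(ii) only supplies a measure-preserving bijection $\tau_0 : \mathrm{supp}\, f \to \mathrm{supp}\, f^*$. I extend $\tau_0$ to a measure-preserving bijection $\tau : I \to I$ by gluing on any measure-preserving bijection between the complements $I \setminus \mathrm{supp}\, f$ and $I \setminus \mathrm{supp}\, f^*$: both have equal Lebesgue measure (both null when $\mathrm{supp}\,f^* = I$, both infinite otherwise), so such a bijection exists either by a standard isomorphism of $\sigma$-finite Lebesgue spaces or, explicitly, by the distribution-function construction of Remark~\ref{rem:21}(b). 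On the complement of $\mathrm{supp}\,f$ both sides of $|f| = f^* \circ \tau$ vanish, so the identity persists a.e.\ on $I$. In the second case $\alpha > 0$, Lemma~\ref{lem:Ryff-3} supplies $\tau$ with $|f| \le (1+\varepsilon)\,f^* \circ \tau$ at once.

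With $v := w \circ \tau$ the properties $v^* = w$ and $\mathrm{supp}\,v = I \supset \mathrm{supp}\,f$ (since $w>0$) are immediate. From $|f| = f^*\circ\tau$ together with $v = w\circ\tau$ one obtains $f/v = (f^*/w)\circ\tau$ a.e.\ on $I$, using the convention $0/0 = 0$ where $f^* = 0$. For each $s > 0$ the $\nu$-distribution of $f/v$ computes as
\[
\nu\{f/v > s\} \;=\; \int_I \chi_{\{(f^*/w)\circ\tau > s\}}\,(w\circ\tau)\,dm \;=\; \int_I \bigl(\chi_{\{f^*/w>s\}}\,w\bigr)\circ\tau\,dm \;=\; \omega\{f^*/w > s\},
\]
where the last equality is the change-of-variables formula $\int_I h\circ\tau\,dm = \int_I h\,dm$ for the measure-preserving bijection $\tau$ applied to $h = \chi_{\{f^*/w>s\}}\,w$. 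Thus $f/v$ (w.r.t.\ $\nu$) and $f^*/w$ (w.r.t.\ $\omega$) have the same distribution, hence $(f/v)^{*,v} = (f^*/w)^{*,w}$ on $J$. The identical calculation, with the pointwise inequality $|f| \le (1+\varepsilon)\,f^*\circ\tau$ in place of equality, delivers the $(1+\varepsilon)$-estimate in the last case.

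The one genuinely non-routine step is the extension of $\tau_0$ to all of $I$ in the sub-case where $\mathrm{supp}\,f$ has infinite measure but $\mathrm{supp}\,f^* \ne I$; this is standard measure-theoretic bookkeeping, and everything else is Ryff's theorem plus a one-line change of variables.
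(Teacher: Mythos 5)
Your overall strategy ($v=w\circ\tau$ plus a one-line change of variables) is exactly the paper's, and the distribution computation at the end is correct. The gap is in the sub-case where $I=(0,\infty)$, $\mathrm{supp}\,f$ has infinite measure and $f^*(t)\to 0$: you claim that $I\setminus\mathrm{supp}\,f$ and $I\setminus\mathrm{supp}\,f^*$ have equal measure (``both null when $\mathrm{supp}\,f^*=I$''), and on that basis extend $\tau_0$ to a measure-preserving bijection of $I$. This is false. When $m(\mathrm{supp}\,f)=\infty$ one automatically has $\mathrm{supp}\,f^*=(0,\infty)$, so $I\setminus\mathrm{supp}\,f^*$ is empty; but $I\setminus\mathrm{supp}\,f$ can have positive, even infinite, measure --- take $f(t)=e^{-t}\chi_{\bigcup_n(2n,2n+1)}(t)$. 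In that situation no measure-preserving extension $\tau:I\to I$ of $\tau_0$ exists at all: since $\tau_0$ already satisfies $m(\tau_0^{-1}(A))=m(A)$ for $A\subset I$, any extension would give $m(\tau^{-1}(A))\ge m(A)+m\bigl((\tau|_{I\setminus\mathrm{supp}\,f})^{-1}(A)\bigr)$, and taking $A=I$ forces the extra term to be $m(I\setminus\mathrm{supp}\,f)>0$, contradicting measure preservation. The sub-case you actually flag as ``non-routine'' ($\mathrm{supp}\,f^*\ne I$) is vacuous; the real difficulty is the one you dismissed.

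The repair, which is what the paper does, is to give up on $\mathrm{supp}\,v=I$: define $v=w\circ\tau_0$ on $\mathrm{supp}\,f$ and $v=0$ elsewhere. One still has $v^*=w$ (because $w>0$ and $\tau_0$ is measure preserving from $\mathrm{supp}\,f$ onto $\mathrm{supp}\,f^*=I$, so $d_v(s)=d_w(s)$ for all $s>0$), and $\mathrm{supp}\,v=\mathrm{supp}\,f$, which is all the statement requires (it asks only $\mathrm{supp}\,v\supset\mathrm{supp}\,f$ and $v\in L^0_+$, not $v>0$). Your change-of-variables identity then goes through verbatim with the convention that $h\circ\tau_0=0$ off $\mathrm{supp}\,f$, since both $f/v$ and the integrands vanish there. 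The other two cases of your argument are fine as written.
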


\begin{proof}
 The proof will make use of the following fact. \\
 \indent  (a) If $\tau: I\to I$ is a measure preserving transformation, $w$ is a weight on $I$ and $v=w\circ \tau$, then clearly $v^*=w$. Moreover  for every $h\in L^0_+$ we have  $h^{*,w} = (h\circ \tau)^{*,v}$. 
 
 Indeed  for every $\lambda>0$, and $g\in L^0_+$ we have $ \{g\circ\tau>\lambda\}=(g\circ \tau)^{-1} (\lambda, \infty) = (\tau^{-1}\circ g^{-1})(\lambda, \infty) = \tau^{-1}(\{g>\lambda\})$. Thus $g\circ\tau$ and $g$ are equimeasurable for the measure $m$, and it follows that  $\int_I g\circ\tau\,dm=\int_I g\,dm$.  Setting now $g=\chi_{\{h>\lambda\}}w$, we get
 \begin{align*}
\omega(\{h>\lambda\}) &= \int_I \chi_{\{h>\lambda\}}\,wdm = \int _I(\chi_{\{h>\lambda\}}\circ\tau)\,(w\circ\tau)\,dm \\
&= \int_I \chi_{\{h\circ \tau >\lambda\}}v\,dm =\nu(\{h\circ \tau >\lambda\}),
\end{align*}
hence $h$ for $\omega$ and $h\circ\tau$ for $\nu$ are equimeasurable, and so $h^{*,w} = (h\circ \tau)^{*,v}$.

Let us  come back now to the proof of Proposition \ref{lem:equivalence} itself. We consider several cases.

 If the support of $f$ has finite measure, then by Proposition \ref{prop:ryff} (i), there exists a measure preserving onto transformation $\tau: I\to I$ such that $|f|(t) = f^*\circ \tau(t)$, $t\in I$. Then setting $v = w\circ \tau$, we have $v>0$, $v^*=w$, and by (a), $f^*/w$ for $d\omega=wdm$ and $(f^*/w)\circ\tau$ for $d\nu=vdm$ are equimeasurable. But $(f^*/w)\circ\tau = (f^*\circ\tau)/(w\circ\tau) = |f|/v$,  and the desired equality of rearrangements follows.

If now the support of $f$ has infinite measure and $\lim_{t\to \infty} f^*(t) =0$, by Proposition \ref{prop:ryff} (ii) there exists a measure preserving transformation $\tau$ from the support of $f$ onto the support of $f^*$, such that  $|f|(t) = f^*\circ \tau(t)$ for $t\in\mathrm{supp}\,f$. Define $v(t) = w \circ \tau(t)$ for $t$ in the support of $f$ and $v(t)=0$ otherwise. By the assuption that the support $f$ has infinite measure we have that $\mathrm{supp}\, f^* = (0,\infty)$. Then we have $\mathrm{supp}\,v = \mathrm{supp}\,f$ and again  $v^*=w$. In fact the conclusions of  (a) remain valid when defining  $h\circ\tau(t)=0$ for any $t\not\in \mathrm{supp}\,f = \mathrm{supp}\,v$.
Thus the conclusion $(f^*/w)^{*,w}=(f/v)^{*,v}$ remains valid provided we define $(f/v)(t)=0$ for $t\not\in \mathrm{supp}\,f$.

Finally if $I= (0,\infty)$ and $\lim_{t\to \infty} f^*(t) >0$, then by Lemma \ref{lem:Ryff-3} for every $\eps>0$ there exists   a measure preserving onto transformation $\tau: I\to I$ such that $|f|\le (1+\eps)\, f^*\circ \tau$.
Defining the weight $v=w\circ \tau$, we have $v>0$ on $I$. By  (a), $v^*=w$ and 
\[
\bigg(\frac{f}{v}\bigg)^{*,v} \le \bigg(\frac{(1+\eps)f^*\circ \tau}{v}\bigg)^{*,v}=(1+\eps)\bigg(\frac{f^*\circ \tau}{w\circ \tau}\bigg)^{*,v}= (1+\eps)\bigg(\frac{f^*}{w}\bigg)^{*,w}.
\]
\end{proof}

Given  $0\le v\in L^0(I)$, let  us introduce some notation.  Set
  \[
  V(t)=\int_0^t v\, dm \ \ \   \hbox{ and assume} \ \ \  v^* = w, \ \ \ \  V(t)<\infty, \ \ \ t\in I. 
  \]
Then $V$  is an increasing, not necessarily strictly increasing,   and  continuous  function  from  $I$ onto  $J=(0, b)$ since $V(a) = \int_0^a v^*\, dm = \int_0^a w \, dm = W(a) = b$.  For  $t\in J $, the set  $V^{-1}\{t\}$ is a closed subinterval of $I$. Let
\[
N_v = \{t\in J: m(V^{-1}\{t\}) > 0 \}.
\]
 Clearly the set $N_v$ is finite or countable. If $t\in N_v$ then $v$ vanishes a.e. on $V^{-1}\{t\}$. If $t\not\in N_v$ then $V^{-1}(t)$ is defined unambiguously as the unique element in $V^{-1}\{t\}$.

For $f\in L^0$ with $\mathrm{supp}\, f\subset\mathrm{supp}\, v$ define $f\circ V^{-1}$ by
\begin{equation}\label{eq:366}
f\circ V^{-1}(t)=
\begin{cases}
0& \hbox{if } t\in N_v,\\
f(V^{-1}(t))& \hbox{if } t\in J \setminus N_v.  
\end{cases}
\end{equation}
With the convention (\ref{eq:366}) above 
the submajorization result of Theorem \ref{H-L} as well as Proposition \ref{lem:equivalence} may be restated in a more transparent way when the weight $w$ is such that $W(t) < \infty$ for all $t\in I$.
 
\begin{corollary}\label{approx}
If  $W < \infty$ on $I$, then for any $v\in L_+^0(I)$ with $v^*=w$,  and every $f\in L_1+L_\infty(I)$ with $\mathrm{supp}\,f\subset \mathrm{supp}\,v$ we have
\[
\frac{f^*}w\circ W^{-1}\prec \frac fv \circ V^{-1}.
\]
Moreover if $I=(0,a)$ with $a<\infty$ or if $I=(0,\infty)$ and $\lim_{t\to\infty}f^*(t) = 0$, then there exists $v\in L_+^0$ with $\mathrm{supp}\,f\subset \mathrm{supp}\,v$ such that $v^*=w$ and
\[
\left(\frac{f^*}w\circ W^{-1}\right)^*= \left(\frac fv \circ V^{-1}\right)^*.
\]
If $I=(0,\infty)$ and $\lim\limits_{t\to\infty}f^*(t)>0$ then for every $\eps>0$ there exists $v>0$ on $I$ such that $v^*=w$ and 
\[ 
\left( \frac fv \circ V^{-1}\right)^* \le (1+\eps)\,\left(\frac{f^*}w\circ W^{-1}\right)^*.
 \]
\end{corollary}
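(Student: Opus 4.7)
The approach is to reduce Corollary \ref{approx} to Theorem \ref{H-L} and Proposition \ref{lem:equivalence} by establishing an equimeasurability identity for the substitution $\circ\,V^{-1}$ analogous to the one given by Proposition \ref{prop:S=W}(i) for $\circ\,W^{-1}$. Specifically, the key auxiliary claim is: \emph{for any $g\in L^0(I)$ with $\mathrm{supp}\,g\subset\mathrm{supp}\,v$, the function $g\circ V^{-1}\in L^0(J)$ defined by $(\ref{eq:366})$ is equimeasurable with $g$ for the measure $\nu$, i.e.\ $(g\circ V^{-1})^*=g^{*,v}$.} This generalizes Proposition \ref{prop:S=W}(i) to the possibly non-strictly-increasing $V$.

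To prove this claim, I would argue as follows. Because the intervals $V^{-1}\{t\}$ for $t\in N_v$ are pairwise disjoint subintervals of $I$ each of positive length, $N_v$ is at most countable and hence has Lebesgue measure zero. On each such interval $V$ is constant, which forces $v=0$ a.e.\ there; thus $v=0$ a.e.\ on $V^{-1}(N_v)$, and since $\mathrm{supp}\,g\subset\mathrm{supp}\,v$, the set $A_\lambda:=\{|g|>\lambda\}$ is contained, up to a null set, in $I\setminus V^{-1}(N_v)$. On this last set $V$ is continuous and strictly increasing onto $J\setminus N_v$, and the change-of-variables formula gives $m(V(B))=\int_B v\,dm=\nu(B)$ for every measurable $B\subset I\setminus V^{-1}(N_v)$. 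Using $(\ref{eq:366})$, the set $\{|g\circ V^{-1}|>\lambda\}$ coincides with $V(A_\lambda)$ modulo an $m$-null subset of $N_v$, so
\[
m\{|g\circ V^{-1}|>\lambda\}=m(V(A_\lambda))=\nu(A_\lambda)=d^v_g(\lambda),
\]
which yields the announced equimeasurability.

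With this lemma in hand, applied to $g=f/v$, and with Proposition \ref{prop:S=W}(i) applied to $g=f^*/w$ (noting that $W$ is strictly increasing because $w>0$), the submajorization claim $\frac{f^*}{w}\circ W^{-1}\prec \frac{f}{v}\circ V^{-1}$ is equivalent, by definition of $\prec$, to $(f^*/w)^{*,w}\prec (f/v)^{*,v}$, which is exactly Theorem \ref{H-L}. For the equality statement, Proposition \ref{lem:equivalence} furnishes $v\in L^0_+$ with $v^*=w$, $\mathrm{supp}\,v\supset\mathrm{supp}\,f$ and $(f^*/w)^{*,w}=(f/v)^{*,v}$; translating both sides via the two equimeasurability identities yields $(\frac{f^*}{w}\circ W^{-1})^*=(\frac{f}{v}\circ V^{-1})^*$. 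The remaining near-equality case ($\lim_{t\to\infty}f^*(t)>0$) is handled identically, starting from the $(1+\eps)$-inequality in the third part of Proposition \ref{lem:equivalence}.

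The main obstacle is the auxiliary equimeasurability lemma in the presence of flat intervals of $V$; the core subtlety is that $V$ need not be injective, so $V^{-1}$ must be defined via the convention $(\ref{eq:366})$. The resolution rests on the two compatible observations that $V$ is constant precisely on intervals where $v$ vanishes a.e., and that $\mathrm{supp}\,f\subset\mathrm{supp}\,v$ confines the relevant behavior of $f$ to the portion of $I$ on which $V$ acts as a genuine measure-preserving homeomorphism onto $J\setminus N_v$. Everything else is then a direct transcription of Theorem \ref{H-L} and Proposition \ref{lem:equivalence} through this transfer principle.
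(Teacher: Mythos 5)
Your proposal is correct and follows essentially the same route as the paper: both proofs hinge on showing that $g\circ V^{-1}$ on $(J,m)$ is equimeasurable with $g$ on $(I,\nu)$ despite the flat intervals of $V$, and then transcribe Theorem \ref{H-L} and Proposition \ref{lem:equivalence} through this identity. The only (immaterial) difference is that you verify the equimeasurability via the forward image identity $m(V(B))=\nu(B)$ on $I\setminus V^{-1}(N_v)$, whereas the paper uses the change-of-variables formula $\int_J h\,dm=\int_I (h\circ V)\,d\nu$ together with the fact that $(g\circ V^{-1})\circ V=g$ holds $\nu$-a.e.
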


\begin{proof}
  Let $N_v  = \{t_n\}$ be an enumeration of $N_v$ and set $A = \bigcup_{n} V^{-1}\{t_n\}$. Then $A\subset I$ and $\nu(A) = \int_A v\,dm  = 0$. 
If  $t\notin A$ then  $(f\circ V^{-1})\circ V(t)=f(t)$, and so  $(f\circ V^{-1})\circ V = f$ $\nu$-a.e. on $I$.
Moreover for any  $h\in L_+^0$ and $t\ge 0$ by the change of variable formula it holds
\begin{align}\label{equi-meas-compos-V}
m\{h>t\} = \int_I \chi_{(t,\infty)} \circ h \,dm=\int _I \chi_{(t,\infty)}\circ h\circ V d\nu= \nu\{h\circ V >t\}.
\end{align}
It follows that $h$ for $m$ and $h\circ V$ for $\nu$ are equimeasurable. In particular 
\[
m\{|f|\circ V^{-1} > t\} = \nu \{ (|f|\circ V^{-1}) \circ V > t\} = \nu\{|f| > t\},
\]
 and so 
$f\circ V^{-1}$ for $m$ and $f$ for $\nu$ are equimeasurable. Hence $\frac{f}{v}\circ V^{-1}$ for $m$  and $\frac{f}{v}$ for $\nu$ are equimeasurable, and so  
\[
\left(\frac{f}{v} \circ V^{-1}\right)^* = \left(\frac{f}{v}\right)^{*,v}.
\] 
By a similar argument  $\frac{f^*}{w}\circ W^{-1}$ for $m$ and $\frac{f^*}{w}$ for  $\omega$ are equimeasurable as well, and  hence 
\[
\left(\frac{f^*}{w} \circ W^{-1}\right)^* = \left(\frac{f^*}{w}\right)^{*,w}.
\]
Now the conclusion follows directly from  Theorem \ref{H-L} and Proposition \ref{lem:equivalence}.
\end{proof}

\begin{remark}\label{rem:isom-E-E_v}
{\it Let $0\le v\in L^0(I)$ with $V(t)<\infty$ for all $t\in I$, $\nu$ be the measure $v\,dm$ and $J_v=(0,\nu(I))$. Let $E$ be a symmetric space on $J_v$.  Then for every  $h\in E$,  $h\circ V\in E_v$ and   the map $T: h\mapsto h\circ V$ is a  surjective order isometry from $E$ onto $E_v$. }
\end{remark}
\begin{proof}
Indeed by (\ref{equi-meas-compos-V}), $h$ for $m$ and $h\circ V$ for $\nu$ are equimeasurable, thus $T$ embeds isometrically $E$~ into $E_v$. Moreover for every $f\in E_v$ we have $f= T(f\circ V^{-1}) $, thus $T$ is surjective. Here $f\circ V^{-1}$ is defined as (\ref{eq:366}) where $J$ is replaced by $J_v$.
\end{proof}

\section{Spaces $M_{E,w}$}\label{sec:classes-M}

In this section we define a class $M_{E,w}$ of functions contained in $L^0 = L^0(I)$ which will be used later for investigating the K\"othe dual of the Lorentz space $\Lambda_{E,w}$. 

\subsection{Definition and properties}\label{subsec:def-class-M}
Let the class $M_{E,w}$ and the {\it gauge} on $M_{E,w}$ be defined by
\[
M_{E,w} = \bigg\{f\in L^0: \frac{f^*}{w} \in E_w\bigg\} \ \ \text{and}\ \ \ 
\|f\|_{M_{E,w}}=\bigg\| \frac {f^*}w\bigg\|_{E_w} = \bigg\| \bigg(\frac {f^*}{w}\bigg)^{*,w}\bigg\|_{E}.
\]

Although the class $M_{E,w}$ does not need to be even linear it has several properties analogous to those in symmetric spaces, so  a similar terminology is used here as may be seen below.
\begin{proposition}\label{properties class M}
\begin{itemize}
\item[(i)] The class $M_{E,w}$ is a solid symmetric subset of $L^0$, that is $\|f\|_{M_{E,w}} = \|f^*\|_{M_{E,w}}$ and if $f\in L^0$, $g\in M_{E,w}$ and $|f|\le |g|$ a.e. then $f\in M_{E,w}$ and $\|f\|_{M_{E,w}} \le \|g\|_{M_{E,w}}$. 
\item[(ii)] For all $x\in I$, $\chi_{(0,x)}\in M_{E,w}$. Consequently the support of $M_{E,w}$ is equal to the entire interval $I$. 
\item[(iii)] The fundamental function $\phi_{M_{E,w}}(x)= \|\chi_{(0,x)}\|_{M_{E,w}}$, $x\in I$, verifies
\begin{align*}
\phi_{M_{E,w}}(x)\le
2\phi_E(1\wedge b) \left(x+ \frac{1}{w(x)}\right).
\end{align*}
\item[(iv)] If $W<\infty$ on $I$,
then
\[
f\in M_{E,w}\iff \frac {f^*}w\circ W^{-1}\in E\quad \hbox{ and }\quad \|f\|_{M_{E,w}}=\bigg\| \frac {f^*}w\circ W^{-1}\bigg\|_E.
\]
\item[(v)] If $E$ has the Fatou property then the class $M_{E,w}$  has this  property, that is for every $f\in L^0$, $0\le f_n \in M_{E,w}$ with $f_n\uparrow f$ a.e. and $\sup_{n} \|f_n\|_{M_{E,w}}=K<\infty$ we have $f\in M_{E,w}$ and $\|f\|_{M_{E,w}}=K$.
\end{itemize}
\end{proposition}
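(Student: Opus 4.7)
The plan is to handle the five parts in the order (i), (iv), (v), (iii), (ii): the main work lies in (iii), (ii) is immediate from it, and (i), (iv), (v) are routine consequences of the setup.

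Part (i) is a one-liner: symmetry reads $\|f^*\|_{M_{E,w}}=\|(f^*)^*/w\|_{E_w}=\|f^*/w\|_{E_w}$, and solidity follows because $|f|\le|g|$ gives $f^*\le g^*$ pointwise, to which the ideal and monotone-norm properties of $E_w$ apply. For (iv) one applies Proposition \ref{prop:S=W}(i) to $h:=f^*/w$ to obtain $(h\circ W^{-1})^*=h^{*,w}$, and then uses full symmetry of $E$ to pass from $\|h^{*,w}\|_E$ to $\|h\circ W^{-1}\|_E$. For (v), the observation that $f_n\uparrow f$ forces $d^w_{f_n}\uparrow d^w_f$ pointwise, and hence $f_n^{*,w}\uparrow f^{*,w}$, transfers the Fatou property of $E$ to $E_w$; applied to the monotone sequence $f_n^*/w\uparrow f^*/w$, this yields (v).

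The core of the proof is (iii). Set $g:=\chi_{(0,x)}^*/w=\chi_{(0,x)}/w$. Monotonicity of $w$ gives $\|g\|_\infty\le 1/w(x)$, while a direct computation gives $\int_I g\,d\omega=\int_0^x(1/w)\,w\,dm=x$. Substituting $\lambda=0$ and $\lambda=1/w(x)$ into the $L_1+L_\infty$ infimum formula (\ref{eq:1}) applied to the measure $\omega$ yields
\[
\int_0^t g^{*,w}\,dm\le\min\bigl(x,\,t/w(x)\bigr)\qquad\text{for all }t>0,
\]
which is precisely the submajorization $g^{*,w}\prec\psi$ with $\psi:=(1/w(x))\chi_{(0,xw(x))}$. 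Since $xw(x)\le W(x)<b$, the function $\psi$ lies in $E$, and full symmetry of $E$ delivers
\[
\|\chi_{(0,x)}\|_{M_{E,w}}=\|g^{*,w}\|_E\le\|\psi\|_E=\phi_E(xw(x))/w(x).
\]

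To finish (iii) I invoke quasi-concavity of $\phi_E$ at the reference point $c:=1\wedge b$. If $xw(x)\le c$, then $\phi_E(xw(x))\le\phi_E(c)$ and the bound becomes $\phi_E(c)/w(x)\le\phi_E(c)(x+1/w(x))$. If $xw(x)>c$, then $c=1$ (since $c=b$ would force $xw(x)\le b=c$, a contradiction), so $\phi_E(xw(x))\le xw(x)\phi_E(1)$ and the bound becomes $x\phi_E(1)\le\phi_E(1)(x+1/w(x))$. Either way, $\phi_{M_{E,w}}(x)\le\phi_E(1\wedge b)(x+1/w(x))\le 2\phi_E(1\wedge b)(x+1/w(x))$, giving (iii); part (ii) is then immediate, since $\|\chi_A\|_{M_{E,w}}=\phi_{M_{E,w}}(m(A))<\infty$ whenever $m(A)<\infty$. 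The main obstacle is spotting the right submajorizing $\psi$ in (iii): combining the simultaneous $L_1(\omega)$ and $L_\infty$ estimates on $g$ through (\ref{eq:1}) is what converts the problem from a direct rearrangement computation (which would require explicit knowledge of $W^{-1}$) into a short quasi-concavity case split.
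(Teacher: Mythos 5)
Your proof is correct, and parts (i), (iv), (v) coincide with the paper's argument, which disposes of them in essentially one line each. The genuine difference is in (ii)--(iii). The paper computes, exactly as you do, that $h_x=\chi_{(0,x)}/w$ satisfies $\|h_x\|_{L_1(I,\omega)}=x$ and $\|h_x\|_{L_\infty}\le 1/w(x)$, but then simply invokes the standard embedding $L_1\cap L_\infty(J)\subset E$ with constant $2\phi_E(1\wedge b)$ (citing \cite{KPS}, Ch.~II, Theorem 4.1) to conclude both membership and the bound. You instead make the step self-contained: the two estimates are combined through (\ref{eq:1}) into the explicit submajorization $h_x^{*,w}\prec (1/w(x))\chi_{(0,xw(x))}$, and full symmetry of $E$ together with quasi-concavity of $\phi_E$ (valid here because $\chi_{(0,t)}\prec\frac ts\chi_{(0,s)}$ for $s\le t$ and $E$ is fully symmetric) yields the bound with the slightly better constant $\phi_E(1\wedge b)$ in place of $2\phi_E(1\wedge b)$. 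What your route buys is independence from the cited embedding theorem and a sharper constant; what it costs is the case split on whether $xw(x)$ exceeds $1\wedge b$. Two cosmetic points: formula (\ref{eq:1}) is an infimum over $\lambda>0$, so your ``$\lambda=0$'' bound should be read as the trivial estimate $\int_0^t h_x^{*,w}\,dm\le\int_I h_x\,d\omega=x$ (or as a limit $\lambda\downarrow 0$); and since (ii)--(iii) are not restricted to the case $W<\infty$, the chain $xw(x)\le W(x)<b$ should be replaced by $xw(x)<\infty=b$ when $W\equiv\infty$ --- the conclusion $\psi\in E$ survives either way.
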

\begin{proof} (i) It is clear by symmetry and ideal properties of $E_w$.

(ii)   For every $x\in I$ we have
\[
\int_0^x \frac 1w d\omega= \int_0^x \frac 1{w} wdm=x,
\]
thus the function $h_x=\frac 1w\chi_{(0,x)}\in  L_1(I, \omega)$. On the other  hand $h_x \le 1/w(x)$ a.e. equivalently $\omega$-a.e. on $I$, and so it is bounded $\omega$-a.e. on $I$. Hence   $h_x \in L_\infty(I, \omega)$. Consequently  $h_x \in L_1\cap L_\infty(I, \omega)$. Therefore   $h_x^{*,w}\in L_1\cap L_\infty(J, m)$.  Indeed, it is clear that
\begin{align}\label{eq:11}
\|h_x^{*,w}\|_\infty = 1/w(x).
 \end{align}
We also have that $m\{h_x^{*,w} > t\}= \omega\{h_x > t\}$, $t\ge 0$,  in view of equimeasurability of $h_x^{*,w}$ with respect to $m$ on $J$ and $h_x$ with respect to $\omega$ on $I$. Hence 
 \begin{align}\label{ineq:123}
 \|h_x^{*,w}\|_1& = \int_J h_x^{*,w} \, dm = \int_0^\infty m\{h_x^{*,w} > t\} \, dm(t)\\
 & = \int_0^\infty \omega\{h_x > t\}\, dm(t) = \int_I h_x w\, dm= x. \notag
 \end{align}
It is well known \cite{BS, KPS} that   $L_1\cap L_\infty(J,m)\subset E$, and so $h_x^{*,w}={(\chi_{(0,x)}/w)^{*,w}}\in E$. The latter means that $\chi_{(0,x)}\in M_{E,w}$ for every $x\in I$. Thus the support of the space $M_{E,w}$ is the entire interval $I$.

(iii)  Since $E$ is a symmetric Banach function space  it is well known that $\|f\|_E \le C\|f\|_{L_1\cap L_\infty}$, $f\in E$, where  $C= 2\ph_E(1\wedge b)$ (see  \cite{KPS}, Ch. II, Theorem 4.1 and its proof). From (\ref{eq:11}) and (\ref{ineq:123}), $\|h_x^{*,w}\|_{L_1\cap L_\infty} \le x + 1/w(x)$. Thus
\begin{align*}
\phi_{M_{E,w}}(x)&=\left\|h_x\right\|_{E_w} = \left\|h_x^{*,w}\right\|_E\le
2\ph_E(1\wedge b) \left(x+ \frac{1}{w(x)}\right).
\end{align*}

(iv) This condition follows directly from Proposition \ref{prop:S=W}.

(v) It is immediate by the definition of the space $M_{E,w}$ and the properties of the rearrangements. 
\end{proof}

From Theorem \ref{H-L}, Proposition \ref{lem:equivalence} and Corollary  \ref{approx} we obtain directly the next result.

\begin{proposition}\label{inf-formula}  For any $f\in M_{E,w}$ we have
\begin{align*}
\|f\|_{M_{E,w}}&=\inf\left\{ \left\| \frac fv \right\|_{E_v} : v\ge 0, v^*=w, \mathrm{supp}\,v\supset \mathrm{supp}\, f \right\}
\end{align*}
with the convention that $\|g\|_E=\infty$ for every  $g\notin E$, and $f(t)/v(t) = 0$ whenever $f(t) = 0$.

\smallskip
Moreover if $W<\infty$ on $I$, then for $f\in L^0$ we have that $f\in M_{E,w}$ if and only if $ \frac fv \circ V^{-1}\in E$ for some $v\ge 0$ with $v^*= w$ and $\mathrm{supp}\,v\supset \mathrm{supp}\,f$.
\end{proposition}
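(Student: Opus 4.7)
The proof is essentially a direct application of the submajorization inequality in Theorem \ref{H-L} together with the near-equality of Proposition \ref{lem:equivalence}. I would structure it in two parts.

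\textbf{Upper bound.} First I would show that $\|f\|_{M_{E,w}}$ dominates the infimum. Fix any admissible $v\ge 0$ with $v^*=w$ and $\mathrm{supp}\,v\supset \mathrm{supp}\,f$. Theorem \ref{H-L} gives
\[
\left(\frac{f^*}{w}\right)^{*,w}\prec \left(\frac{f}{v}\right)^{*,v}
\]
as elements of $L^0(J)$. Since $E$ is fully symmetric on $J$, applying $\|\cdot\|_E$ to both sides yields
\[
\|f\|_{M_{E,w}}=\left\|\left(\tfrac{f^*}{w}\right)^{*,w}\right\|_E \le \left\|\left(\tfrac{f}{v}\right)^{*,v}\right\|_E = \left\|\tfrac{f}{v}\right\|_{E_v},
\]
with the understood convention that the right-hand side is $+\infty$ if $f/v\notin E_v$. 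Taking the infimum over all admissible $v$ gives $\|f\|_{M_{E,w}}\le \inf\{\ldots\}$.

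\textbf{Reverse inequality.} Here I would split cases exactly as in Proposition \ref{lem:equivalence}. If either $I=(0,a)$ with $a<\infty$, or $I=(0,\infty)$ with $\lim_{t\to\infty}f^*(t)=0$, then Proposition \ref{lem:equivalence} produces an admissible $v$ realizing the equality $(f^*/w)^{*,w}=(f/v)^{*,v}$, and consequently $\|f/v\|_{E_v}=\|f\|_{M_{E,w}}$, so the infimum is attained. If instead $I=(0,\infty)$ and $\lim_{t\to\infty}f^*(t)>0$, then for every $\eps>0$ Proposition \ref{lem:equivalence} yields $v>0$ with $v^*=w$ and $(f/v)^{*,v}\le (1+\eps)(f^*/w)^{*,w}$; monotonicity of $\|\cdot\|_E$ gives
\[
\left\|\tfrac{f}{v}\right\|_{E_v}\le (1+\eps)\,\|f\|_{M_{E,w}},
\]
and letting $\eps\downarrow 0$ completes the reverse estimate. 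Combining with the upper bound yields the claimed formula.

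\textbf{The $W<\infty$ reformulation.} Under this additional hypothesis, I would invoke Corollary \ref{approx}: for any admissible $v$ with $V=\int_0^\cdot v\,dm$, the function $\frac{f}{v}\circ V^{-1}$ (defined via (\ref{eq:366})) and $\frac{f}{v}$ are equimeasurable for $m$ and $\nu$ respectively, so $f/v\in E_v$ is equivalent to $\frac{f}{v}\circ V^{-1}\in E$ by Remark \ref{rem:isom-E-E_v}. In one direction, if $\frac{f}{v}\circ V^{-1}\in E$ for some such $v$, then Corollary \ref{approx} yields $\frac{f^*}{w}\circ W^{-1}\prec \frac{f}{v}\circ V^{-1}\in E$, so full symmetry of $E$ gives $\frac{f^*}{w}\circ W^{-1}\in E$, which by Proposition \ref{properties class M}(iv) means $f\in M_{E,w}$. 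The converse follows by picking $v=w$, since then $V=W$ and $\frac{f^*}{w}\circ W^{-1}\in E$ exactly when $f\in M_{E,w}$.

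The only mildly delicate point is the non-vanishing tail case $\lim_{t\to\infty}f^*(t)>0$, where the infimum need not be attained and one must pass to a limit; everything else reduces cleanly to the inequalities already established in Section~\ref{sec:an-inequality}.
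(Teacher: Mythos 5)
Your overall strategy is exactly the paper's: the paper gives no written proof at all, stating only that the proposition follows ``directly'' from Theorem \ref{H-L}, Proposition \ref{lem:equivalence} and Corollary \ref{approx}, and your first two parts assemble precisely those ingredients correctly. The upper bound via the submajorization of Theorem \ref{H-L} plus full symmetry of $E$, and the reverse inequality via the three cases of Proposition \ref{lem:equivalence} (with the $(1+\eps)$ limit in the non-vanishing-tail case), are fine.

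There is, however, a genuine flaw in your last paragraph. For the direction ``$f\in M_{E,w}$ implies $\frac fv\circ V^{-1}\in E$ for some admissible $v$'' you propose to take $v=w$, asserting that then the condition becomes $\frac{f^*}{w}\circ W^{-1}\in E$. It does not: with $v=w$ the condition is $\frac{f}{w}\circ W^{-1}\in E$, with $f$ itself and not $f^*$, and the submajorization of Corollary \ref{approx} runs the wrong way here --- it gives $\frac{f^*}{w}\circ W^{-1}\prec \frac{f}{w}\circ W^{-1}$, which lets you pass from $\frac{f}{w}\circ W^{-1}\in E$ to $f\in M_{E,w}$ but not conversely. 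A concrete obstruction: on $I=(0,1)$ with $w(t)=1-t$ and $E=L_\infty$, the function $f(t)=(1-t)^{1/2}\chi_{(1/2,1)}(t)$ has $f^*/w$ bounded (so $f\in M_{L_\infty,w}$) while $f/w$ is unbounded near $1$. The correct argument is the one you already used for the first part: invoke the existence statements (the ``Moreover'' clauses) of Proposition \ref{lem:equivalence} or Corollary \ref{approx} to produce a $v$ adapted to $f$ with $\bigl(\frac fv\circ V^{-1}\bigr)^*=\bigl(\frac{f^*}{w}\circ W^{-1}\bigr)^*$, or $\le(1+\eps)$ times it in the non-vanishing-tail case; then symmetry of $E$ gives $\frac fv\circ V^{-1}\in E$. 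With that substitution the proof is complete and coincides with the paper's intended argument.
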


\begin{remark} The class $M_{E,w}$ does not need to be either linear or normable. Let  $E$ be an Orlicz space $L_\varphi$, then the class $M_{E,w}$ is the class $M_{\varphi,w}$ considered in \cite{KR}. 
In view of \cite[Proposition 3.4]{KR} the class $M_{\varphi,w}$ may not be linear, while by \cite[Proposition 4.14 and Example 4.15]{KR} it may be linear but not normable.
 
\end{remark}

\subsection{Normability}

Before we prove the main result on normability of the class $M_{E,w}$ we need the following lemma.

\begin{lemma}\label{compar-norms}
Let $w_1$, $w_2$ be two decreasing positive weights on $I$ such that for some constant $C\ge 1$ it holds that  $w_1\le Cw_2$ a.e.. Then  for every function $f\in L^0$ we have 
\[
\left(\frac {f}{w_2}\right)^{*,w_2} \prec C \left(\frac {f}{w_1}\right)^{*,w_1}.
\]
  Consequently, if $\int_I w_1dm=\int_I w_2 dm=b$  and $E$ is a fully symmetric space on $J=(0,b)$ then $M_{E,w_1}\subset M_{E,w_2}$  with $\|f\|_{M_{E,w_2}}\le  C\|f\|_{M_{E,w_1}}$  for  $f \in M_{E,w_1}$. 
  \end{lemma}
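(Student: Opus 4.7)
My plan is to derive the submajorization from the Hardy--Littlewood variational formula \eqref{eq:1}, and then transfer it to the norm inequality using full symmetry of $E$.

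First, I apply \eqref{eq:1} to $f/w_i$ on the measure space $(I, w_i\,dm)$. Because $w_i > 0$ a.e.\ on $I$ and $(|f|/w_i - \lambda)_+ w_i = (|f| - \lambda w_i)_+$ pointwise, this yields for each $x \in J$ and $i=1,2$
\[
\int_0^x \left(\frac{f}{w_i}\right)^{*,w_i} dm = \inf_{\lambda>0}\left[\int_I (|f|-\lambda w_i)_+\,dm + \lambda x\right].
\]
The crucial step is then to rescale $\lambda = \mu/C$ in the $i=1$ identity, so that
\[
C\int_0^x \left(\frac{f}{w_1}\right)^{*,w_1} dm = \inf_{\mu>0}\left[C\int_I \left(|f|-\tfrac{\mu}{C}w_1\right)_+ dm + \mu x\right].
\]
Because $w_1 \le Cw_2$, we have $(\mu/C)w_1 \le \mu w_2$ pointwise, hence $(|f|-\mu w_2)_+ \le (|f|-(\mu/C)w_1)_+$, and since $C \ge 1$ this is in turn $\le C(|f|-(\mu/C)w_1)_+$. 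Integrating, adding the penalty $\mu x$, and taking the infimum on both sides yields
\[
\int_0^x \left(\frac{f}{w_2}\right)^{*,w_2} dm \le C\int_0^x \left(\frac{f}{w_1}\right)^{*,w_1} dm
\]
for every $x \in J$, which is the claimed submajorization.

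For the consequence, I apply this inequality with $f$ replaced by its $m$-decreasing rearrangement $f^*$, giving $(f^*/w_2)^{*,w_2} \prec C(f^*/w_1)^{*,w_1}$. If $f\in M_{E,w_1}$, then $(f^*/w_1)^{*,w_1}\in E$ with $E$-norm $\|f\|_{M_{E,w_1}}$, so full symmetry of $E$ produces $(f^*/w_2)^{*,w_2}\in E$ with $\|(f^*/w_2)^{*,w_2}\|_E \le C\|(f^*/w_1)^{*,w_1}\|_E$, that is $\|f\|_{M_{E,w_2}} \le C\|f\|_{M_{E,w_1}}$.

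I do not anticipate a serious obstacle, but the one non-obvious move is the substitution $\lambda\mapsto\mu/C$: without it, the naive pointwise bound $(|f|-\mu w_2)_+ \le (|f|-\mu w_1/C)_+$ would compare the two infima with mismatched parameter $x$-terms, and the factor $C$ could not be absorbed across the infimum. Once the two variational identities are aligned by that substitution, the proof is essentially bookkeeping.
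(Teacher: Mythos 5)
Your argument is correct, and it reaches the conclusion by a different (dual) route from the paper. The paper computes $\int_0^x (f/w_2)^{*,w_2}\,dm$ via the supremum formula $\sup_{\omega_2(A)\le x}\int_A |f|\,dm$, observes that $w_1\le Cw_2$ turns the constraint $\omega_2(A)\le x$ into $\omega_1(A)\le Cx$, and then absorbs the dilation with the elementary inequality $\int_0^{Cx}h\,dm\le C\int_0^x h\,dm$ valid for non-negative decreasing $h$. You instead use the infimum (K-functional) characterization (1.1), and the whole content of the constant $C$ is carried by the substitution $\lambda\mapsto\mu/C$ together with the pointwise bound $(|f|-\mu w_2)_+\le C\bigl(|f|-(\mu/C)w_1\bigr)_+$; this is exactly the technique the paper deploys in the proofs of Theorem \ref{H-L} and Proposition \ref{prop:normability}, so your version is arguably more uniform with the rest of the paper, and it avoids the auxiliary dilation inequality for decreasing functions. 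One small point worth making explicit: formula (1.1) is stated for $f\in L_1+L_\infty$, whereas the lemma claims the submajorization for all $f\in L^0$; this is harmless because when $f/w_1\notin L_1+L_\infty(I,w_1dm)$ both sides of your variational identity are $+\infty$ and the submajorization is vacuous (the paper's supremum formula sidesteps this by being valid for arbitrary non-negative measurable functions). The deduction of the norm inequality from full symmetry of $E$ is the same in both proofs, using tacitly that the hypothesis $\int_I w_1\,dm=\int_I w_2\,dm=b$ puts both rearrangements on the same interval $J$.
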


\begin{proof}
 Setting  $\omega_2= w_2\,dm$, by the well known formula (\cite{BS}, Ch. 2, Proposition 3.3, \cite{KPS}, p.64, (2.14)) we get for $x\in I$, 
\begin{align*}
 \int_0^x \bigg(\frac {f}{w_2}\bigg)^{*,w_2} dm = \sup_{\omega_2(A)\le x}\int_A \frac {|f|}{w_2}d\omega_2= \sup_{\omega_2(A)\le x}\int_A |f|\,dm,
 \end{align*}
 and a similar equation holds true for $w_1$.
Clearly  $w_1\le C w_2$ a.e. implies that  $\sup\limits_{\omega_2(A)\le x}\int_A |f|dm \le \sup\limits_{\omega_1(A)\le Cx}\int_A |f|dm $. Thus
\[
 \int_0^x \bigg(\frac {f}{w_2}\bigg)^{*,w_2}\, dm  \le\int_0^{Cx}   \bigg(\frac {f}{w_1}\bigg)^{*,w_1}\, dm.
  \]
But for  $C\ge 1$, $Cx\in(0,a)$ and a non-negative decreasing function $h$  on $(0,a)$ we have
\[
\int_0^{Cx}h\,dm \le \int_0^xh\,dm + h(x) x (C-1) \le \int_0^x h\,dm + (C-1)\int_0^x h\,dm = C \int_0^x h\, dm,
\]
and the conclusion follows.
\end{proof}

\begin{proposition}\label{prop:normability}
Assume that the weight $w$ is regular that is $W(t) \le Ctw(t)$ for some $C\ge 1$ and all $t\in I$.  Then $M_{E,w}$ is a vector space and the formula
\begin{equation}\label{equiv}
\trnorm{f}:= \inf\left\{\sum\limits_{i=1}^n \|f_i\|_{M_{E,w}}: \sum\limits_{i=1}^n |f_i|\ge |f|\right\}  
\end{equation}
defines a lattice norm  $\trnorm{\cdot}$ on $M_{E,w}$ such that
\begin{equation}\label{compar-gauges}
\trnorm{f} \le \|f\|_{M_{E,w}} \le C\trnorm{f}.
 \end{equation}
Consequently the  class $M_{E,w}$ is a normable vector lattice.
\end{proposition}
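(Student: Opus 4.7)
The inequality $\trnorm{f}\le\|f\|_{M_{E,w}}$ is immediate by taking the trivial decomposition $f_1=f$ in (\ref{equiv}); by construction $\trnorm{\cdot}$ is positively homogeneous, subadditive (infima of sums combine upon concatenation of decompositions) and solid, hence a lattice seminorm. For the non-trivial direction $\|f\|_{M_{E,w}}\le C\trnorm{f}$, the solidity of the gauge (Proposition \ref{properties class M}(i)) and positive homogeneity reduce the task to showing that for every finite decomposition $f=\sum_{i=1}^n f_i$ with $f,f_i\ge 0$ in $M_{E,w}$,
\[
\|f\|_{M_{E,w}}\le C\sum_{i=1}^n\|f_i\|_{M_{E,w}}.
\]
Once this estimate is established the linearity of $M_{E,w}$ follows at once from the case $n=2$ applied to $|f+g|\le|f|+|g|$.

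The plan combines the optimization formula of Proposition \ref{inf-formula} with the regularity of $w$. Given $\varepsilon>0$, for each $i$ pick an admissible weight $v_i\ge 0$ (with $v_i^*=w$ and $\mathrm{supp}\,v_i\supset\mathrm{supp}\,f_i$) for which $\|f_i/v_i\|_{E_{v_i}}\le(1+\varepsilon)\|f_i\|_{M_{E,w}}$. The central step is to produce a single admissible weight $v$ (with $v^*=w$ and $\mathrm{supp}\,v\supset\mathrm{supp}\,f$) such that
\[
\|f_i/v\|_{E_v}\le C\|f_i/v_i\|_{E_{v_i}}\qquad (i=1,\ldots,n).
\]
Given such $v$, Theorem \ref{H-L} yields $\|f\|_{M_{E,w}}\le \|f/v\|_{E_v}$, and the triangle inequality in the Banach function space $E_v$ gives
\[
\|f\|_{M_{E,w}}\le\sum_i\|f_i/v\|_{E_v}\le C\sum_i\|f_i/v_i\|_{E_{v_i}}\le C(1+\varepsilon)\sum_i\|f_i\|_{M_{E,w}},
\]
and letting $\varepsilon\to 0$ finishes the argument.

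The principal obstacle is the construction of the universal admissible weight $v$. Without regularity the various $v_i$, which arise naturally as $v_i=w\circ\tau_i$ for suitable measure-preserving transformations $\tau_i$ (see the proof of Proposition \ref{lem:equivalence}), can be genuinely incompatible and no universal substitute exists. Under the regularity condition $W(t)\le Ctw(t)$, however, all decreasing weights with rearrangement $w$ become mutually comparable up to the factor $C$: the Hardy average $t\mapsto W(t)/t$ is itself decreasing and satisfies $w(t)\le W(t)/t\le Cw(t)$ pointwise, so Lemma \ref{compar-norms} makes any two such admissible rearrangements interchangeable in the $M$-norm at cost $C$. The construction of $v$ is then expected to proceed by choosing a single measure-preserving transformation $\tau$ that amalgamates the individual $\tau_i$ (producing $v=w\circ\tau$) in a manner whose compatibility with each $\tau_i$ is governed by the regularity constant; this is the crux of the proof and the only place where the hypothesis $W(t)\le Ctw(t)$ enters.
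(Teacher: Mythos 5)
Your reduction to the inequality $\|\sum_i f_i\|_{M_{E,w}}\le C\sum_i\|f_i\|_{M_{E,w}}$, the use of Proposition \ref{inf-formula} to pick near-optimal weights $v_i$, and the final assembly via Theorem \ref{H-L} and the triangle inequality in $E_v$ all match the paper's strategy. But the central step is missing: you never construct the universal weight $v$, and you say explicitly that its construction ``is expected to proceed by choosing a single measure-preserving transformation $\tau$ that amalgamates the individual $\tau_i$.'' That is precisely the part that does not work as stated. There is in general no single measure-preserving $\tau$ compatible with all the $\tau_i$, hence no admissible $v$ with $v^*=w$ exactly and $\mathrm{supp}\,v\supset\mathrm{supp}\,f$ for which $\|f_i/v\|_{E_v}\le C\|f_i/v_i\|_{E_{v_i}}$ for every $i$; your supporting heuristic --- that regularity makes ``all decreasing weights with rearrangement $w$ mutually comparable'' --- does not apply, because the $v_i$ are not decreasing, and two weights with the same rearrangement $w$ need not be pointwise comparable at all (Lemma \ref{compar-norms} requires a pointwise bound $w_1\le Cw_2$ between \emph{decreasing} weights).

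The paper resolves this differently: it does not seek a $v$ with $v^*=w$. Writing $\int_0^x h^{*,\nu}\,dm=\inf_{\lambda>0}[\int_I(|h|-\lambda v)_+\,dm+\lambda x]$ as in (\ref{eq:1}) and using subadditivity of $s\mapsto s_+$, the infimum over independent parameters $\lambda_1,\dots,\lambda_n$ is reorganized into an infimum over a single $\lambda$ and a \emph{convex combination} $v=\sum_i\alpha_i v_i$. Such a $v$ satisfies only $v^*\prec w$, whence $tv^*(t)\le V(t)\le W(t)\le Ctw(t)$, i.e.\ $v^*\le Cw$ pointwise; this is exactly where regularity enters, and Corollary \ref{approx} (for the weight $v^*$) together with Lemma \ref{compar-norms} (comparing $v^*$ with $w$) then yields the claim at the cost of the factor $C$. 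So the gap in your argument is not cosmetic: the object you posit does not exist in the form you require, and the correct substitute (a convex combination with $v^*\prec w$ rather than $v^*=w$, plus the submajorization machinery of Theorem \ref{H-L} and Corollary \ref{approx} applied to $v^*$) has to be built and justified; that construction is the actual content of the proposition's proof.
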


\begin{proof}
We will prove that for any finite family $f_1,\dots, f_n$ in $M_{E,w}$ we have 
\begin{equation}\label{eq:0}
\left\|\sum\limits_{i=1}^n f_i\right\|_{M_{E,w}}\le C\sum\limits_{i=1}^n \|f_i\|_{M_{E,w}},
\end{equation}
 where $C$ is the constant of regularity of $w$. Then
$\trnorm{\cdot}$ defined by (\ref{equiv})
is a vector lattice norm on $M_{E,w}$  equivalent to the gauge $\|f\|_{M_{E,w}}$. In fact we will verify  (\ref{compar-gauges}).

We claim that
\begin{equation}\label{claim}
\left(\frac 1w\left(\sum_{i=1}^n f_i\right)^*\right)\circ W^{-1} \prec C\sum_{i=1}^n \left(\frac{f_i}{v_i}\circ V_i^{-1}\right)^*
\end{equation}
for every non-negative functions $v_1,\dots, v_n$ with $\mathrm{supp}\, f_i \subset \mathrm{supp}\, v_i$,  $v_i^*= w$, $i=1,\dots, n$, where $V_i^{-1}$ are defined as in the proof of Corollary \ref{approx}, since
 $V_i(t) = \int_0^t v_i\, dm \le \int_0^t v_i^*\, dm = \int_0^t w\, dm = W(t) <\infty$ for all $t\in I$. 
The statement of the claim then implies the following
\[
\left\|\left(\frac 1w\left(\sum_{i=1}^n f_i\right)^*\right)\circ W^{-1}\right\|_E \leq  C\sum_{i=1}^n \left\|\frac{f_i}{v_i}\circ V_i^{-1}\right\|_E.
\]
Taking the infimum of every right term with respect to  $v_i$ with $v_i^*= w$ and $\mathrm{supp}\, f_i \subset \mathrm{supp}\, v_i$ for $i=1,\dots ,n$, we get by Proposition \ref{inf-formula},
\[
\left\|\left(\frac 1w\left(\sum_{i=1}^n f_i\right)^*\right)\circ W^{-1}\right\|_E \leq  C\sum_{i=1}^n \left\|\frac{f^*_i}{w}\circ W^{-1}\right\|_E,
\]
and consequently in view of Proposition \ref{properties class M}{(iv)} we obtain the desired inequality (\ref{eq:0}).

Now in order to finish it is enough to  prove claim (\ref{claim}), which is equivalent to the following inequality
\begin{equation}\label{ineq-weights}
\int_0^x \left(\frac{\left(\sum_{i=1}^n f_i\right)^*}{w}\circ W^{-1}\right)^*dm\le
C\sum_{i=1}^n \int_0^x\left(\frac{|f_i|}{v_i}\circ V_i^{-1}\right)^*dm, \ \ \  x\in J.
\end{equation}
For any measurable $v\ge 0$ with $V(t) = \int_0^t v\,dm <\infty$, $t\in I$, and $f\in L^0$ such that $\mathrm{supp}\,f \subset\mathrm{supp}\, v$, by equimeasurability of $f/v$ for $d\nu=vdm$ and $(f/v) \circ V^{-1}$ for $m$ we have  that 
$(f/v)^{*,v} = ((f/v)\circ V^{-1})^*$. Hence by  (\ref{eq:1})
for any $x\in J$,
 \begin{align*}
 \int_0^x  \left(\frac fv\circ V^{-1}\right)^*dm&= \int_0^x \left(\frac{f}{v}\right)^{*,v} \, dm =\inf_{\lambda>0}\left\{\int_I\left(\frac{|f|}v-\lambda\right)_+\, d\nu +\lambda x\right\}\\
 & = \inf_{\lambda>0}\left\{ \int_I (|f|-\lambda v)_+dm+\lambda x\right\}.
\end{align*}
Thus the righthand side of (\ref{ineq-weights}) has the following form 
 \begin{equation}\label{eq:2}
R(x):= \sum_{i=1}^n \int_0^x\left(\frac{|f_i|}{v_i}\circ V_i^{-1}\right)^*dm= \inf_{\substack{\lambda_i  > 0\\ i=1,\dots, n}} \left\{ \int_I
\sum_{i=1}^n (|f_i|- \lambda_i v_i)_+dm+ \sum_{i=1}^n\lambda_i x\right\}.
\end{equation}
The function 
$s\mapsto s_+$ is subadditive and non-decreasing on $\mathbb R$. Hence a.e. on $I$,
\begin{align*}
\left(\left|\sum_{i=1}^n f_i\right| - \sum_{i=1}^n \lambda_i v_i \right)_+ &\le \left(\sum_{i=1}^n |f_i| - \sum_{i=1}^n \lambda_i v_i\right)_+
\le \sum_{i=1}^n (|f_i|- \lambda_i v_i)_+.
\end{align*}
Thus by (\ref{eq:2}), in view of (\ref{eq:1}) we get for $x\in J$,
\begin{align*}
R(x)\ge &\inf_{\lambda_1,\dots,\lambda_n>0} \left[\int_I\ \bigg(\bigg|\sum_{i=1}^n  f_i\bigg|- \sum\limits_{i=1}^n\lambda_i v_i\bigg)_+\,dm+x\sum_{i=1}^n \lambda_i\right]\\
&= \inf_{\alpha_1,\dots,\alpha_n>0\atop \sum \alpha_i=1}\inf_{\lambda>0}\bigg[ \int_I \bigg(\bigg|\sum_{i=1}^n  f_i\bigg|
  - \lambda\sum\limits_{i=1}^n \alpha_i v_i\bigg)_+\,dm+\lambda x \bigg] \\
 &  = \inf_{\alpha_1,\dots,\alpha_n>0\atop \sum \alpha_i=1; {v = \sum\alpha_iv_i}}\inf_{\lambda>0}\bigg[ \int_I \bigg(\frac{\bigg|\sum_{i=1}^n  f_i\bigg|}{v} - \lambda\bigg)_+ v \, dm + \lambda x \bigg]  \\
 &= \inf_{v\in \mathrm{conv}(v_1,\dots, v_n)}
\int_0^x  \bigg(\frac {\sum_{i=1}^n f_i}{v}\bigg)^{*,v}\,dm\\
&=\inf_{v\in \mathrm{conv}(v_1,\dots, v_n)}\int_0^x\bigg(\frac {\big|\sum_{i=1}^n f_i\big|}v\circ V^{-1}\bigg)^*\, dm.
\end{align*}
If $v\in \mathrm{conv}(v_1,\dots, v_n)$ we have $v=\sum_{i=1}^n \alpha_i v_i$ for some $\alpha_i\ge 0$ with $\sum_{i=1}^n \alpha_i= 1$. Since by $v_i^*=w$ we have $V_i(t)\le W(t)$ for every $0\le t< a$,  with equality  $V_i(a) = \lim_{t\to a^-} V_i(t) = W(a)=\lim_{t\to a^-} W(t)$, we obtain $V(t)=\sum_{i=1}^n \alpha_i V_i(t)\le \sum_{i=1}^n \alpha_i W(t) = W(t)$ for $t\in I$  with $V(a) = W(a)$,
so that the continuous function $V$ maps $I$ onto $J$, and we may define $V^{-1}$ as in the proof of Corollary \ref{approx}.  We  also have $v^*\prec \sum_{i=1}^n \alpha_i v_i^*=w$, hence
\[
tv^*(t)\le\int_0^t v^*\le W(t)\le Ctw(t),\ \ \ \  t\in I,
\]
by regularity of $w$.
But then for every $v\in \mathrm{conv}(v_1,\dots, v_n)$, letting  $V_*(t): =\int_0^t v^*$, we get for $x\in J$,
 \begin{align*}
\int_0^x  \bigg(\frac {\big|\sum_{i=1}^n  f_i\big|}v\circ V^{-1}\bigg)^*\,dm &\ge \int_0^x  \bigg(\frac {\big(\sum_{i=1}^n  f_i\big)^* }{v^*}\circ V_*^{-1}\bigg)^*\,dm\\
&\ge \frac 1C \int_0^x { \bigg(\frac {\big(\sum_{i=1}^n f_i \big)^*}w \circ W^{-1}\bigg)^*}\,dm = :L(x),
\end{align*}
where the first inequality results from Corollary \ref{approx} with $v^*$, $V_*$ playing  the role of $w$, $W$ respectively,   and the second one by   Lemma \ref{compar-norms} applied to the weights $v^*$ and $w$. Thus $CR(x) \succ L(x)$, and this proves the claim and completes the proof.

\end{proof}

\section{K\"othe duality of $M_{E,w}$.}\label{sec:Koethe duality}
The K\"othe dual  of the class $M_{E,w}$ is defined as  for a Banach function space, as the set of elements $f\in L^0=L^0(I)$ such that
\[ 
 \|f\|_{(M_{E,w})'} := \sup\bigg\{\int_I |fg|\,dm: g\in M_{E,w}, \|g\|_{M_{E,w}}\le 1\bigg\}<\infty. 
\]
The set $(M_{E,w})'$ is an  ideal in $L^0$ on which $f\mapsto \|f\|_{(M_{E,w})'}$ defines a vector lattice norm. Equipped with this norm,  the space $(M_{E,w})'$ becomes a symmetric Banach function space, as it may be shown directly; but this will be also a consequence  of the next theorem.
\begin{theorem}\label{dual-M}
If $W<\infty$ on $I$,  then the K\"othe dual  of $M_{E,w}$ equals $\Lambda_{E',w}$ isometrically, that is $\|f\|_{(M_{E,w})'} = \|f\|_{\Lambda_{E',w}}$. 
\end{theorem}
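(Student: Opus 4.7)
The plan is to prove the two inequalities separately, both pivoting on the change of variables $x=W(t)$; since $W<\infty$ makes $W:I\to J$ a homeomorphism, this substitution converts the Köthe pairing of $\Lambda_{E',w}$ with $M_{E,w}$ on $I$ into the Köthe pairing of $E'$ with $E$ on $J$.

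For the inequality $\|f\|_{(M_{E,w})'}\leq\|f\|_{\Lambda_{E',w}}$, given $f\in \Lambda_{E',w}$ and $g\in M_{E,w}$, I would apply the Hardy--Littlewood inequality $\int_I |fg|\,dm \leq \int_0^a f^*g^*\,dm$, then write the integrand as $f^*\cdot (g^*/w)\cdot w$, and substitute $x=W(t)$ to obtain $\int_0^b (f^*\circ W^{-1})\cdot ((g^*/w)\circ W^{-1})\,dx$. By Proposition \ref{prop:2}(ii) the first factor has $E'$-norm equal to $\|f\|_{\Lambda_{E',w}}$, and by Proposition \ref{properties class M}(iv) the second factor has $E$-norm equal to $\|g\|_{M_{E,w}}$, so the Köthe duality bound between $E$ and $E'$ closes the inequality.

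For the reverse inequality, by symmetry we may assume $f\geq 0$. The goal is: for every decreasing $0\leq h\in E$ with $\|h\|_E\leq 1$, to produce $g\in M_{E,w}$ with $\|g\|_{M_{E,w}}\leq 1$ such that $\int_I fg\,dm=\int_0^b (f^*\circ W^{-1})\,h\,dm$. (The reduction to decreasing $h$ is free since $f^*\circ W^{-1}$ is decreasing, so replacing $h$ by $h^*$ only increases the integral.) Set $k:=w\cdot (h\circ W)$ on $I$; this is decreasing as the product of two non-negative decreasing functions, and $(k/w)\circ W^{-1}=h$, so any $g$ with $g^*=k$ satisfies $\|g\|_{M_{E,w}}=\|h\|_E\leq 1$ by Proposition \ref{properties class M}(iv). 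When either $\mathrm{supp}\,f$ has finite measure or $\lim_{t\to\infty}f^*(t)=0$, Ryff's theorem (Proposition \ref{prop:ryff}) furnishes a measure-preserving $\tau:I\to I$ with $f=f^*\circ\tau$, and setting $g:=k\circ\tau$ yields $g^*=k$ and $\int_I fg\,dm=\int_0^a (f^*k)\circ\tau\,dm=\int_0^a f^*\cdot w\cdot(h\circ W)\,dm$, which after the substitution $x=W(t)$ is exactly $\int_0^b (f^*\circ W^{-1})\,h\,dm$. Taking the supremum over admissible $h$ and invoking the representation $\|f^*\circ W^{-1}\|_{E'}=\sup\{\int (f^*\circ W^{-1})h\,dm:\|h\|_E\leq 1\}$ finishes the case.

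The main technical obstacle is the remaining Ryff case, when $I=(0,\infty)$ and $\lim_{t\to\infty}f^*(t)>0$: here no exact measure-preserving rearrangement of $f^*$ into $f$ exists. Instead, Lemma \ref{lem:Ryff-3} supplies, for each $\eps>0$, a measure-preserving $\tau_\eps$ with $f\leq (1+\eps)\,f^*\circ \tau_\eps$. Setting $g_\eps:=k\circ \tau_\eps$, the same computation yields $(1+\eps)\int_I fg_\eps\,dm\geq \int_0^b (f^*\circ W^{-1})h\,dm$, and letting $\eps\to 0^+$ recovers the desired bound. Combining the two inequalities gives the isometric identification $(M_{E,w})'=\Lambda_{E',w}$, which incidentally re-confirms that $(M_{E,w})'$ is a fully symmetric Banach function space.
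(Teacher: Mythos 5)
Your first inequality and your treatment of the reverse inequality in the two Ryff cases (support of finite measure, or infinite support with $f^*(t)\to 0$) are correct and essentially the paper's argument: compose the near-optimal dual element $k=w\cdot(h\circ W)$ with a measure-preserving rearrangement of $f$. Your formulation even has a small advantage: since you prove the lower bound $\|f\|_{(M_{E,w})'}\ge\|f\|_{\Lambda_{E',w}}$ for arbitrary $f\in L^0$ (allowing the value $+\infty$), the set inclusion $(M_{E,w})'\subset\Lambda_{E',w}$ comes for free, whereas the paper needs a separate order-density/Fatou step for it.

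However, your third case --- $I=(0,\infty)$ with $\lim_{t\to\infty}f^*(t)>0$ --- contains a genuine error: the inequality supplied by Lemma \ref{lem:Ryff-3} points the wrong way. That lemma gives $|f|\le(1+\eps)\,f^*\circ\tau_\eps$, so for $g_\eps=k\circ\tau_\eps$ one obtains
\[
\int_I |f|\,g_\eps\,dm\;\le\;(1+\eps)\int_I(f^*\circ\tau_\eps)(k\circ\tau_\eps)\,dm=(1+\eps)\int_0^b(f^*\circ W^{-1})\,h\,dm,
\]
which is an \emph{upper} bound on $\int_I|f|g_\eps\,dm$; the dual-norm computation needs a \emph{lower} bound of the form $\int_I|f|g_\eps\,dm\ge(1-\delta)\int_0^b(f^*\circ W^{-1})h\,dm$. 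Your claim $(1+\eps)\int_I fg_\eps\,dm\ge\int_0^b(f^*\circ W^{-1})h\,dm$ would require $f^*\circ\tau_\eps\le(1+\eps)|f|$, which is the reverse of what the lemma asserts, and no such $\tau_\eps$ exists in general because the set $\{|f|<\alpha\}$, $\alpha=\lim f^*$, may have infinite measure. The case is not vacuous: if $W(\infty)<\infty$ then constants belong to $\Lambda_{E',w}$. The repair is the paper's approximation argument: take $f_n=|f|\chi_{A_n}$ with $m(A_n)<\infty$ and $f_n\uparrow|f|$, apply the finite-support case to each $f_n$ to find $g_n$ with $\int_I|f|g_n\,dm\ge\int_I|f_n|g_n\,dm\ge\|f_n\|_{\Lambda_{E',w}}-\tfrac1n$, and let $n\to\infty$ using the Fatou property of $\Lambda_{E',w}$ (Proposition \ref{prop:2}) to get $\|f\|_{(M_{E,w})'}\ge\|f\|_{\Lambda_{E',w}}$. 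With that replacement your proof is complete.
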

\begin{proof}
The proof will be done in several steps.

\par a) $\Lambda_{E',w}\subset (M_{E,w})'$ and the inclusion is norm-decreasing i.e. $\|f\|_{(M_{E,w})'} \le \|f\|_{\Lambda_{E',w}}$. Indeed if $f\in \Lambda_{E',w}$ and $g\in M_{E,w}$ then in view of  the assumption $W<\infty$ and Proposition \ref{properties class M} (iv) we get
\begin{align}\label{ineq:11}
\int_I |fg| \,dm&\le \int _I f^*g^*\,dm=\int_I f^*\frac{g^*}{w}wdm=\int_J (f^*\circ W^{-1})\bigg(\frac{g^*}w\circ W^{-1}\bigg)\, dm\\
&\le \|f^*\circ W^{-1}\|_{E'} \bigg\|\frac{g^*}w\circ W^{-1}\bigg\|_E= \|f^*\|_{(E')_w} \bigg\|\frac{g^*}w\bigg\|_{E_w}=
\|f\|_{\Lambda_{E',w}}\|g\|_{M_{E,w}},\notag
\end{align}
which shows  that $\|f\|_{(M_{E,w})'} \le \|f\|_{\Lambda_{E',w}}$. 
\par b) Now we will show that for every $f\in\Lambda_{E',w}$ we get the equality of the norms $\|f\|_{(M_{E,w})'} = \|f\|_{\Lambda_{E',w}}$.  Assume first that $0\le f\in \Lambda_{E',w}$ is decreasing, and so $f\circ W^{-1}$ is also decreasing.  
Then for any $\eps> 0$ we can find a decreasing non-negative function $h\in E$ with  $\|h\|_E = 1$  and satisfying
 \[
 \|f\|_{\Lambda_{E',w}}-\eps = \|f\circ W^{-1}\|_{E'} - \eps \le \int_J (f\circ W^{-1})h\,dm= \int_I f\,( h\circ W) w\,dm.
 \]
Setting $g=( h\circ W)\, w$, we have 
\begin{equation}\label{eq:3}
\int_I fg\, dm\ge \|f\|_{\Lambda_{E',w}}-\eps,
\end{equation}
while $ g/w= h\circ W\in E_w$ with $\|g/w\|_{E_w}= \|h\|_E=1$ by Proposition \ref{prop:S=W}.  Now since $g$ is decreasing we have $g\in M_{E,w}$ and $\|g\|_{M_{E,w}}= \|g/w\|_{E_w} = 1$. Then by (\ref{ineq:11}) and (\ref{eq:3}) we get $\|f\|_{(M_{E,w})'} = \|f\|_{\Lambda_{E',w}}$.

Let us reduce now the general case when $f$ is not decreasing to the preceding one. 

First  assume that  $m(\mathrm{supp}\,f) < \infty$. 
 Then by Proposition \ref{prop:ryff}(i) there exists a measure preserving and onto transformation $\tau$ on $I$ such that $|f| = f^*\circ \tau$. Let $g$ be chosen to satisfy (\ref{eq:3}) for $f^*$   in place of $f$. Then
 \begin{equation}\label{ineq:12}
 \int_I |f| (g\circ \tau)\,dm = \int_I(f^*\circ \tau) (g\circ \tau)\,dm = \int_I f^*g\, dm \ge \|f^*\|_{\Lambda_{E',w}}-\eps= \|f\|_{\Lambda_{E',w}}-\eps,
 \end{equation}
and  $\|g\circ\tau\|_{M_{E,w}}= \|g\|_{M_{E,w}} = 1$.

Now let $m(\mathrm{supp}\,f) = \infty$. There exists a sequence of functions $f_n$ with $m(\mathrm{supp} (f_n)) < \infty$ and such that $|f_n|\uparrow |f|$ a.e.. Hence   $f_n^* \uparrow f^*$ a.e., and by the Fatou property of $\Lambda_{E',w}$ (see Proposition \ref{prop:2}) we get $\|f_n\|_{\Lambda_{E',w}} \uparrow \|f\|_{\Lambda_{E',w}}$. 

Now by (\ref{ineq:12}) for each $f_n$ we can find $g_n\ge 0$ with  $\|g_n\|_{M_{E_w}} = 1$ and such that
\[
\int_I |f_n| g_n \, dm \ge \|f_n\|_{\Lambda_{E', w}} - \frac 1n, \ \ \  \ n\in\mathbb{N}.
\]
Then 
\[ \|f\|_{(M_{E,w})'}\ge \limsup_{n\to \infty} \int_I |f_n| g_n \, dm \ge \lim_{n\to \infty}\left (\|f_n\|_{\Lambda_{E', w}} -\frac 1n\right) =  \|f\|_{\Lambda_{E',w}}.
\]
\par  
c) 
By a) and b) we have that $\Lambda_{E',w}\subset (M_{E,w})'$ and this inclusion is isometric, so $\Lambda_{E',w}$ is a closed ideal in $(M_{E,w})'$. This ideal is order dense, since it contains the bounded functions with finite measure supports, and moreover it has  the Fatou property. It follows that $\Lambda_{E',w}$ is  equal to $(M_{E,w})'$. In fact if $0\le f\in (M_{E,w})'$   there exists a sequence $(f_n)\subset  \Lambda_{E',w}$  with $0\le f_n\uparrow f$ a.e.. Moreover  $\|f_n\|_{\Lambda_{E',w}} = \|f_n\|_{(M_{E,w})'}\le \|f\|_{(M_{E,w})'}$. Then by the Fatou property of $\Lambda_{E',w}$, $f\in \Lambda_{E',w}$.
\end{proof}

The next result is a generalization of \cite[Theorem 2(i)]{HKM}.

\begin{corollary}
Let $W<\infty$ on $I$. If $E$ has the Fatou property and $w$ is regular, then  $(\Lambda_{E',w})'=M_{E,w}$ as sets  with the gauge $\|\cdot \|_{M_{E,w}}$ equivalent to the norm $\|\cdot \|_{(\Lambda_{E',w})'}$.
\end{corollary}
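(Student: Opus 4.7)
The plan is to apply K\"othe biduality twice and invoke the weak-Fatou form of the Lorentz--Luxemburg theorem. By Theorem \ref{dual-M}, $(M_{E,w})' = \Lambda_{E',w}$ isometrically, so taking the K\"othe dual again yields $(\Lambda_{E',w})' = (M_{E,w})''$. The task thus reduces to showing $(M_{E,w})'' = M_{E,w}$ as sets with equivalent norms; the claimed equivalence between the gauge and the dual norm then follows by chaining the equivalences.

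To carry this out I first replace the gauge on $M_{E,w}$ by the equivalent lattice norm $\trnorm{\cdot}$ furnished by Proposition \ref{prop:normability}, which is where the regularity of $w$ enters. Since the K\"othe dual only sees the unit ball up to scaling, passing between the gauge and $\trnorm{\cdot}$ changes $(M_{E,w})'$ and $(M_{E,w})''$ only up to the same equivalence constant $C$ from (\ref{compar-gauges}). Next I verify that $(M_{E,w}, \trnorm{\cdot})$ has the weak Fatou property: if $0 \le f_n \uparrow f$ a.e.\ with $\sup_n \trnorm{f_n} < \infty$, then (\ref{compar-gauges}) forces $\sup_n \|f_n\|_{M_{E,w}} \le C \sup_n \trnorm{f_n} < \infty$, whereupon Proposition \ref{properties class M}(v) (whose hypothesis is exactly the Fatou property of $E$) gives $f \in M_{E,w}$ with $\|f\|_{M_{E,w}} = \lim_n \|f_n\|_{M_{E,w}}$, and hence $\trnorm{f} \le \|f\|_{M_{E,w}} \le C \sup_n \trnorm{f_n}$. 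A standard truncation-and-absolute-summation argument then upgrades weak Fatou to completeness, so $(M_{E,w}, \trnorm{\cdot})$ is a genuine Banach function space.

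The weak-Fatou version of the Lorentz--Luxemburg theorem then applies: a Banach function space $X$ satisfies $X = X''$ as sets with equivalent norms whenever it has weak Fatou (see \cite{BS, Z}). Applied to $(M_{E,w}, \trnorm{\cdot})$, this yields $(M_{E,w})'' = M_{E,w}$ as sets with $\trnorm{\cdot}'' \le \trnorm{\cdot} \le C \trnorm{\cdot}''$. Combining this with the isometry $(M_{E,w})'' = (\Lambda_{E',w})'$ from the first step and the equivalence $\|\cdot\|_{M_{E,w}} \simeq \trnorm{\cdot}$, I conclude $M_{E,w} = (\Lambda_{E',w})'$ as sets and $\|\cdot\|_{M_{E,w}} \simeq \|\cdot\|_{(\Lambda_{E',w})'}$, as required.

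The main obstacle is the non-subadditivity of the gauge: although Proposition \ref{properties class M}(v) supplies precisely the Fatou property one needs, it is stated for the gauge and not for a norm, whereas Lorentz--Luxemburg is naturally formulated for norms. Routing the argument through $\trnorm{\cdot}$ via Proposition \ref{prop:normability} resolves this, and is exactly where the regularity hypothesis on $w$ becomes indispensable. A more self-contained direct strategy would approximate $f \in (\Lambda_{E',w})'$ by truncations $f_n = (|f| \wedge n)\chi_{(0,n)} \in M_{E,w}$ and try to bound $\sup_n \|f_n\|_{M_{E,w}}$ by a multiple of $\|f\|_{(\Lambda_{E',w})'}$ via duality against unit-ball elements of $\Lambda_{E',w}$; but this route again relies essentially on the weak-Fatou input provided by Proposition \ref{properties class M}(v) together with regularity, so nothing is really saved.
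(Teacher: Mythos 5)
Your proposal is correct and follows essentially the same route as the paper: both pass from the gauge to the equivalent lattice norm supplied by Proposition \ref{prop:normability} (where regularity of $w$ enters), use the Fatou property of $M_{E,w}$ from Proposition \ref{properties class M}(v) to get $(M_{E,w})''=M_{E,w}$ with equivalent norms via the Lorentz--Luxemburg biduality theorem, and then chain through the isometry $(M_{E,w})'=\Lambda_{E',w}$ of Theorem \ref{dual-M}. Your write-up merely makes explicit the weak-Fatou bookkeeping that the paper delegates to the cited arguments in Zaanen and Lindenstrauss--Tzafriri.
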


\begin{proof}
It is well known that a Banach function lattice $F$ has the Fatou property if and only if $F=F''$ isometrically \cite{LT2, Z}. The gauge $\|\cdot\|_{M_{E,w}}$ is not a norm, but it is equivalent to a lattice norm on $M_{E,w}$ by Proposition \ref{prop:normability}. Moreover  by Proposition \ref{properties class M} the class $(M_{E,w},\|\cdot\|_{M_{E,w}})$ has the Fatou property.  Now analogously as in the proof of Theorem 1, page 470 in \cite{Z}, or page 30 in \cite{LT2} one can show that
\[
(M_{E,w})''=M_{E,w}\ \ \ \text{as sets, \  and} \ \ \ \|\cdot\|_{(M_{E,w})''} \ \ \ \text{is equivalent to} \ \ \ \|\cdot\|_{M_{E,w}}.
\] 
Then by Theorem \ref{dual-M} we get the equality of sets
$ M_{E,w}=(M_{E,w})'' =(\Lambda_{E',w})'$ with equivalence of $\|\cdot\|_{M_{E,w}}$ and  $\|\cdot\|_{(\Lambda_{E',w})'}$.

\end{proof}

\section{Spaces $Q_{E,w}$}\label{sec:Q}
In this chapter we introduce  a new space related to the class
$M_{E,w}$.

\subsection{Definition and properties}

\begin{definition}\label{def:Qspace}
We denote by $Q_{E,w}$ the set of elements of $L^0=L^0(I)$ which are submajorized by  elements of $M_{E,w}$. For $f\in Q_{E,w}$ we set
\[
 \|f\|_{Q_{E,w}}=\inf\{ \|g\|_{M_{E,w}}: f\prec g \}.
\]
\end{definition}
Given a positive and decreasing weight  $w$ on $I$ and assuming that $W<\infty$, recall that the {\em Marcinkiewicz function space} $M_W$ is  defined as 
\[
 M_W = \left\{f\in L^0: \|f\|_{M_W} = \sup_{x\in I} \frac{\int_0^x f^*}{W(x)} < \infty \right\},  
 \]
and  the space $L_1 + M_W$ is the set of all functions $f\in L^0$ such that 
\[
\|f\|_{L_1 + M_W} = \inf\{\|h\|_1 + \|g\|_{M_W}:\ f= h+g, \ h\in L_1, \ g\in M_W\} < \infty.
\] 
 The spaces $(M_W, \|\cdot\|_{M_W})$ and $(L_1 + M_W, \|\cdot\|_{L_1 + M_W})$  are fully symmetric spaces  \cite{BS, KPS}.  

\begin{theorem}\label{class Q}
Let $w$ be a weight function such that $W<\infty$ on $I$.
\begin{itemize}
\item[(i)] The class $Q_{E,w}$ is a solid linear subspace of $L_1+M_W$ such that  
\begin{equation}\label{inclusion-Q}
\|f\|_{L_1 + M_W} \le C \|f\|_{Q_{E,w}}\ \ \ \text{with}\ \ \ C\le {(1\wedge b)/\phi_E(1\wedge b)}.
\end{equation}
\item[(ii)]  The functional $\|\cdot\|_{Q_{E,w}}$ is a norm on $Q_{E,w}$.
\item[(iii)]  $Q_{E,w}$ equipped with the norm $\|\cdot\|_{Q_{E,w}}$, is the smallest  fully symmetric Banach function space  containing the class $M_{E,w}$.
\item[(iv)] We have   $(Q_{E,w})'= \Lambda_{E',w}$ with equality of norms.
\end{itemize}
\end{theorem}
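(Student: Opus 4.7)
The plan is to handle (i)--(iv) in order; (iv) is the main payoff, obtained by combining Theorem~\ref{dual-M} with Hardy's lemma, while (i)--(iii) establish that $Q_{E,w}$ is a complete, fully symmetric Banach function space with the advertised minimality.

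For (i), solidity is immediate from $|f|\le|g|\Rightarrow f^*\le g^*$, so $f\prec h$ whenever $g\prec h\in M_{E,w}$. For linearity and the triangle inequality, given $f_i\prec g_i\in M_{E,w}$ I would replace each $g_i$ by $g_i^*$; then $(f_1+f_2)^*\prec f_1^*+f_2^*\prec g_1^*+g_2^*$, and the decreasing function $g_1^*+g_2^*$ lies in $M_{E,w}$ with gauge at most $\|g_1\|_{M_{E,w}}+\|g_2\|_{M_{E,w}}$ by linearity of $E_w$. For the embedding into $L_1+M_W$, full symmetry of the target reduces matters to nonnegative decreasing $f\in M_{E,w}$. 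Decompose $(f/w)\circ W^{-1}=u+v$ with $u,v\ge 0$ in $L_1+L_\infty(J)$, and pull back via $W$ to $f=(u\circ W)w+(v\circ W)w$; the change-of-variable identity gives $\|(u\circ W)w\|_1=\|u\|_1$, and $|(v\circ W)w|\le\|v\|_\infty w$ with $\|w\|_{M_W}=1$ gives $\|(v\circ W)w\|_{M_W}\le\|v\|_\infty$. Infimizing over the decomposition yields $\|f\|_{L_1+M_W}\le\|(f/w)\circ W^{-1}\|_{L_1+L_\infty(J)}\le [(1\wedge b)/\phi_E(1\wedge b)]\,\|f\|_{M_{E,w}}$ via the standard constant of the embedding $E\hookrightarrow L_1+L_\infty(J)$, and this propagates to $Q_{E,w}$ by taking the infimum over submajorants.

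The norm axioms of (ii) are then inherited from (i), with faithfulness coming from the fact that $L_1+M_W$ is a BFS. For (iii), minimality is structural: any fully symmetric Banach space $F\supseteq M_{E,w}$ must satisfy $\|f\|_F\le\|g\|_F\le\|g\|_{M_{E,w}}$ whenever $f\prec g\in M_{E,w}$, so $\|f\|_F\le\|f\|_{Q_{E,w}}$, while full symmetry of $Q_{E,w}$ itself is the transitivity of $\prec$. Completeness I would prove by an absolute-convergence argument: given $\sum\|f_n\|_{Q_{E,w}}<\infty$, pick decreasing $g_n\in M_{E,w}$ with $f_n\prec g_n$ and $\|g_n\|_{M_{E,w}}<\|f_n\|_{Q_{E,w}}+2^{-n}$; completeness of $E_w$ gives $\sum g_n/w\in E_w$, whence the pointwise sum $g=\sum g_n$ lies in $M_{E,w}$. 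The partial sums $S_N$ satisfy $S_N-S_M\prec\sum_{M<n\le N}g_n$ and are thus Cauchy in $Q_{E,w}$ and, by (i), in $L_1+M_W$; extracting a pointwise a.e.\ limit $S$ and applying Fatou to $\int_0^x(\cdot)^*$ yields $S-S_M\prec\sum_{n>M}g_n$, so $\|S-S_M\|_{Q_{E,w}}\to 0$.

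For (iv), the inclusion $M_{E,w}\subseteq Q_{E,w}$ with $\|g\|_{Q_{E,w}}\le\|g\|_{M_{E,w}}$ (valid because $g\prec g$) dualizes to $(Q_{E,w})'\subseteq(M_{E,w})'=\Lambda_{E',w}$ with $\|f\|_{\Lambda_{E',w}}\le\|f\|_{(Q_{E,w})'}$, by Theorem~\ref{dual-M}. For the reverse, given $f\in\Lambda_{E',w}$ and $g\in Q_{E,w}$ with $\|g\|_{Q_{E,w}}\le 1$, choose $h\in M_{E,w}$ with $g\prec h$ and $\|h\|_{M_{E,w}}\le 1+\eps$; then
\[
\int_I|fg|\,dm\le\int_0^b f^*g^*\,dm\le\int_0^b f^*h^*\,dm\le\|f\|_{\Lambda_{E',w}}\|h\|_{M_{E,w}}\le(1+\eps)\|f\|_{\Lambda_{E',w}},
\]
by Hardy--Littlewood, Hardy's lemma (since $f^*$ is decreasing and $g\prec h$), and the chain of inequalities~(\ref{ineq:11}) from the proof of Theorem~\ref{dual-M}. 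Sending $\eps\to 0$ yields $\|f\|_{(Q_{E,w})'}\le\|f\|_{\Lambda_{E',w}}$ and closes the identification. The two nontrivial technical points are thus the sharp embedding constant in (i), which is forced by the $(I,\omega)\leftrightarrow(J,m)$ change-of-variable and the standard bound $\|\cdot\|_{L_1+L_\infty(J)}\le[(1\wedge b)/\phi_E(1\wedge b)]\|\cdot\|_E$, and completeness in (iii), where the absolute-convergence argument above is self-contained but could alternatively be recovered from the Fatou property of $Q_{E,w}$ once the level-function identification $\|f\|_{Q_{E,w}}=\|f^0\|_{M_W}$ promised in the introduction is available.
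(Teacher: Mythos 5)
Your proposal is correct and follows essentially the same route as the paper: the same $L_1+L_\infty$ decomposition (you work on $(J,m)$ and pull back by $W$, the paper works directly on $(I,\omega)$, which is the same thing), the same $g_1^*+g_2^*$ triangle-inequality and Riesz-type completeness arguments, and the same duality argument via Theorem~\ref{dual-M}, Hardy--Littlewood and Hardy's lemma. The only quibble is notational: the rearrangements $f^*,g^*,h^*$ live on $I=(0,a)$, so your integrals $\int_0^b f^*g^*\,dm$ should be over $I$, not $(0,b)$.
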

\begin{proof}
(i) If $f\in M_{E,w}$ then ${f^*/ w}\in E_w$. The space $E_w$ is fully symmetric with respect to the measure $d\omega = wdm$ on $I$ by Proposition \ref{prop:S=W}, so  $E_w \hookrightarrow (L_1+L_\infty)(I,\omega)$ with the embedding constant $C\le {1\wedge b\over\phi_E(1\wedge b)}$ by \cite[Ch. II, Theorem 4.1]{KPS} and the fact that $E$ and $E_w$ have the same fundamental function. 
 Since $w$ is positive, the norms in $L_\infty(I,\omega)$ and $L_\infty(I)$ are equal. Thus  for any $\epsilon >0$ there exist $g\in L_1(I,\omega), h\in L_\infty(I,\omega)$ such that
 \[
 {f^*/ w}=g+h \hbox{ and }
\|g\|_{L_1(I,\omega)}+\|h\|_\infty\le C\|{f^*/ w}\|_{E_w} + \epsilon =C\|f\|_{M_{E,w}} + \epsilon.
\]
Then $f^*=gw+ hw$, $\|gw\|_{1}=\|g\|_{L_1(I,\omega)}$ and $\|hw\|_{M_W}\le \|h\|_\infty \|w\|_{M_W}=\|h\|_\infty$. Hence 
\[
\|f^*\|_{L_1+M_W}\le \|gw\|_{1} + \|hw\|_{M_W} \le \|g\|_{L_1(I,\omega)}+\|h\|_\infty\le C\|f\|_{M_{E,w}} + \epsilon,
\]
which gives $\|f\|_{L_1+M_W}\le C\|f\|_{M_{E,w}}$ for any $f\in M_{E,w}$.

Assume now that $f\in Q_{E,w}$ and choose $g\in M_{E,w}$ such that $f\prec g$ and $\|g\|_{M_{E,w}}\le (1+\eps) \|f\|_{Q_{E,w}}$. Since $L_1+M_W$ is fully symmetric and by the previous paragraph $g\in L_1 + M_W$, we have $f\in L_1+M_W$ and
\[
\|f\|_{L_1+M_W}\le \|g\|_{L_1+M_W}\le C \|g\|_{M_{E,w}}\le C(1+\eps)\|f\|_{Q_{E,w}}.
\]
  Letting then $\eps\to 0$, we obtain (\ref{inclusion-Q}). 
It is also clear  that $Q_{E,w}$ is a solid subset  in $L_1+M_W$.

(ii) By  (\ref{inclusion-Q})  we have that $\|\cdot\|_{Q_{E,w}}$ is faithful, that is $\|f\|_{Q_{E,w}}=0$ implies $f=0$ a.e.. Since the homogeneous property of $\|\cdot\|_{Q_{E,w}}$ is clear, we need only to show the triangle inequality. For any $\epsilon > 0$ and
 $f_1,f_2\in Q_{E,w}$,  choose $g_1,g_2\in M_{E,w}$ with
\[
f_i\prec g_i \hbox{ and } \|g_i\|_{M_{E,w}}\le (1+\eps) \|f_i\|_{Q_{E,w}}, \ \ \  i=1,2.
 \]
Then
\[
 (f_1+f_2)\prec f_1^*+f_2^*\prec g_1^*+g_2^*.
 \]
Since $g_i^*/w\in E_w$, $i=1,2$, and $E_w$ is a linear space, we have $(g_1^*+g^*_2)/w \in E_w$ and so $g_1^* + g_2^* \in M_{E,w}$. Thus $f_1+f_2\in Q_{E,w}$. Moreover, since $E_w$ is a normed space we get
\[
\|g_1^*+g_2^*\|_{M_{E,w}} = \|(g_1^*+g_2^*)/w\|_{E_w}\le \|g_1^*/w\|_{E_w}+\|g_2^*/w\|_{E_w} = \|g_1\|_{M_{E,w}} + \|g_2\|_{M_{E,w}}.
 \]
Thus
\[
 \|f_1+f_2\|_{Q_{E,w}} \le \|g_1^* + g_2^*\|_{M_{E,w}} \le\|g_1\|_{M_{E,w}}+\|g_2\|_{M_{E,w}} \le (1+\eps) (\|f_1\|_{Q_{E,w}}+\|f_2\|_{Q_{E,w}}).
  \]
Letting $\eps\to 0$ we obtain that the homogeneous functional $\|\cdot\|_{Q_{E,w}}$ is subadditive, and  thus it is a norm on $Q_{E,w}$.

(iii) By definition of  $\|\cdot\|_{Q_{E,w}}$,  if $f\prec g$, $f\in L^0$ and $g\in Q_{E,w}$ then $f\in Q_{E,w}$ and  $\|f\|_{Q_{E,w}}\le\|g\|_{Q_{E,w}}$. Clearly $\|f^*\|_{Q_{E,w}} = \|f\|_{Q_{E,w}}$. Hence $Q_{E,w}$ is fully symmetric. To prove that $Q_{E,w}$ is complete, by the Riesz criterion it is sufficient to show that if $(f_n)$ is a non-negative sequence in $Q_{E,w}$ with $\sum\limits_{n=1}^\infty\|f_n\|_{Q_{E,w}}<\infty$ then the series $\sum\limits_{n=1}^\infty f_n$ converges in $Q_{E,w}$. In view of completeness of $L_1 + M_W$ and  (\ref{inclusion-Q}), $\sum\limits_{n=1}^\infty f_n$ converges in $L_1+M_W$. 

For every $n$ choose $g_n\in M_{E,w}$  with   $\|g_n\|_{M_{E,w}}\le (1+\eps)
\|f_n\|_{Q_{E,w}}$ and $f_n\prec g_n$. Then $\sum\limits_{n=1}^\infty \|g_n\|_{M_{E,w}}\le (1+\eps) \sum\limits_{n=1}^\infty\|f_n\|_{Q_{E,w}}<\infty$, and since $\|g_n\|_{M_{E,w}}=\|g_n^*/w\|_{E_w}$ it follows that   $\frac 1w\sum\limits_{n=1}^\infty g_n^*$ converges in the Banach function space $E_w$. Therefore 
\[
\left\|\sum\limits_{n=1}^\infty g_n^*\right\|_{M_{E,w}} =\left\|\frac 1w\sum\limits_{n=1}^\infty g_n^* \right\|_{E_w} \le \sum_{n=1}^\infty \left\|\frac{g^*_n}{w}\right\|_{E_w}\le (1+\eps) \sum\limits_{n=1}^\infty\|f_n\|_{Q_{E,w}}.
\]
On the other hand  $\sum\limits_{n=1}^\infty f_n\prec \sum\limits_{n=1}^\infty g_n^*$, thus $\sum\limits_{n=1}^\infty f_n\in Q_{E,w}$ and by the above $\left\|\sum\limits_{n=1}^\infty f_n\right\|_{Q_{E,w}}\le \left\|\sum\limits_{n=1}^\infty g_n^*\right\|_{M_{E,w}}\le (1 + \varepsilon)\sum\limits_{n=1}^\infty\|f_n\|_{Q_{E,w}}$. Letting $\eps\to 0$ we obtain
$\left\|\sum\limits_{n=1}^\infty f_n\right\|_{Q_{E,w}}\le \sum\limits_{n=1}^\infty\|f_n\|_{Q_{E,w}}$.

Similarly for every $m\in \mathbb{N}$ we have $\Big\|\sum\limits_{n=m}^\infty f_n\Big\|_{Q_{E,w}}\le \sum\limits_{n=m}^\infty \|f_n\|_{Q_{E,w}}\to 0$ when $m\to\infty$ and thus $\sum\limits_{n=1}^\infty f_n$ converges in $Q_{E,w}$, which achieves the proof of the completness of $Q_{E,w}$.

Finally if $F$ is a fully symmetric Banach function space containing $M_{E,w}$, it contains also any function that is submajorized by a function of $M_{E,w}$, that is, it contains $Q_{E,w}$, which shows that $Q_{E,w}$ is the smallest  fully symmetric Banach function space  containing the class $M_{E,w}$.

(iv) In view of the assumption $W<\infty$, by Theorem  \ref{dual-M} it is enough to show that the K\"othe dual spaces $(Q_{E,w})'$ and $(M_{E,w})'$ are equal as sets with equal norms. Since $M_{E,w}\subset Q_{E,w}$, and the norm in $Q_{E,w}$ is clearly smaller than the gauge in $M_{E,w}$, the reverse inclusion $(Q_{E,w})'\subset (M_{E,w})'$ holds for their K\"othe duals and $\|h\|_{(M_{E,w})'} \le \|h\|_{(Q_{E,w})'}$.

Conversely if $h\in (M_{E,w})'$, $f\in Q_{E,w}$ and $\eps>0$, let us choose $g\in M_{E,w}$ with $f\prec g$ and
$\|g\|_{M_{E,w}}\le (1+\eps)\|f\|_{Q_{E,w}}$.  Then
\begin{align*}
\int_I |fh|\,dm&\le\int_I f^*h^*\,dm\quad \hbox{ (Hardy-Littlewood inequality \cite[Theorem 2.2]{BS}) }\\
& \le \int_I g^*h^*\,dm\quad  \hbox{ (Hardy's lemma \cite[Proposition 3.6]{BS}, $f^*\prec g^*$, $h^*$ is decreasing) }\\
& \le \|g^*\|_{M_{E,w}} \|h^*\|_{(M_{E,w})'} \le (1+\eps) \|f\|_{Q_{E,w}} \|h\|_{(M_{E,w})'}.
\end{align*}
Letting $\eps\to 0$ we obtain that $h\in (Q_{E,w})'$ with  $\|h\|_{(Q_{E,w})'} \le \|h\|_{(M_{E,w})'}$, and so $\|h\|_{(M_{E,w})'} = \|h\|_{(Q_{E,w})'}$. 
\end{proof}

\subsection{Link with Halperin's level functions}

In this section let $w$ be a positive decreasing  weight function on $I$ such that $W<\infty$ on $I$. 
For $f=f^{\ast }$ locally integrable on $I$, define after Halperin \cite{Ha53}
for $0\leq \alpha<\beta<\infty $, $\alpha,\beta\in I=(0,a)$, $a\le \infty$,
\begin{equation*}
W(\alpha,\beta) =\int_{\alpha}^{\beta}wdm ,\ \ \ F(\alpha,\beta) =\int_{\alpha}^{\beta}f \,dm ,\
\ \ R(\alpha,\beta) =\frac{F(\alpha,\beta) }{W(\alpha,\beta) } ,\ \
\end{equation*}%
and for $\beta =\infty $,
\begin{equation*}
R(\alpha,\beta)=R(\alpha,\infty )=\limsup_{t\rightarrow \infty }R(\alpha,t) .
\end{equation*}%
Then $(\alpha,\beta) \subset I$ is called a \textit{
level interval (resp. degenerate level interval) of $f$ with respect to $w$} if $\beta<\infty $ (resp. $\beta=\infty $%
) and for each $t\in (\alpha,\beta) $,
\begin{equation*}
R(\alpha,t) \leq R(\alpha,\beta) \text{ and }0<R(\alpha,\beta) .
\end{equation*}%
Level intervals can be equivalently assumed to be open, closed or half-closed. If a level interval is not contained in any larger level interval, then it is called \textit{maximal level interval of $f$ with respect to $w$}, or just maximal level interval and in short m.l.i..  In \cite{Ha53}, Halperin proved that maximal level
intervals of $f$ with respect to $w$ are pairwise disjoint and unique and therefore there is at most countable number of maximal level intervals.
\begin{definition}\label{def:level}
\cite{Ha53} Let $f\in L^{0}$ be non-negative, decreasing and locally integrable on $I$.
Then the \textit{level function} $f^{0}$ of $f$ with respect to $w$ is
defined as
\begin{equation*}
f^{0}\left( t\right) =\left\{
\begin{array}{cc}
R(\alpha,\beta)\,w\left( t\right) & \hbox{if $t$ belongs to some maximal level interval $(\alpha,\beta)$},
\\
f\left( t\right) & \text{otherwise}. \hfill
\end{array}%
\right.
\end{equation*}%
For a general $f\in L^0$, $0\le \alpha < \beta < \infty$, $\alpha, \beta \in I$, we define 
\[
f^0=(f^*)^0,\ \ \  F(\alpha, \beta) = \int_\alpha^\beta f^*\,dm, \ \ \ \text{and}\ \ \  F(t) =\int_0^t f^*\, dm, \ \ \ t\in I.
\]
\end{definition}

\begin{fact}[{\bf Properties of level functions}]\label{LF-prop}
Let $f\in L_1 + L_\infty$ and $w$ be a decreasing locally integrable  weight function on $I$.
\begin{itemize}
\item[(i)] \cite[Theorem 3.6]{Ha53} $f^0/w$ is decreasing. Consequently in view of  $w$ being decreasing,  $f^0$ is decreasing as well.
\item[(ii)] \cite[Theorem 3.2]{Ha53} $f\prec f^0$. Moreover if $x$ does not belong to a m.l.i., $\int_0^x f^0\,dm= \int_0^x f^*\,dm$, and so if $I$ is finite, $\int_I f^0\,dm= \int_I f^*\,dm$.
\item[(iii)]  \cite[Theorem 3.7]{Ha53} If $f\prec g$ then $f^0\prec g^0$.
\end{itemize}
\end{fact}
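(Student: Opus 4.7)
All three assertions are attributed to Halperin \cite{Ha53}, so the cleanest route is to invoke the cited theorems directly; however, since the level-interval machinery underlying the proofs is short, it is worth sketching self-contained arguments that parallel Halperin's. The common starting point is to fix a disjoint (at most countable) family $\{(\alpha_k,\beta_k)\}$ of maximal level intervals of $f^*$ with respect to $w$, and to recall that on each such interval $f^0=R(\alpha_k,\beta_k)\,w$ while $f^0=f^*$ elsewhere.

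For (i), I would argue that $f^0/w$ is a decreasing function on $I$. On every m.l.i.\ $(\alpha_k,\beta_k)$ the ratio $f^0/w$ is the constant $R(\alpha_k,\beta_k)$, so the only thing to check is the monotonicity between m.l.i. and between an m.l.i.\ and a point of the complementary set $N=I\setminus\bigcup_k(\alpha_k,\beta_k)$. At a point $t\in N$ the value is $f^*(t)/w(t)$, and maximality of a neighboring m.l.i.\ $(\alpha_k,\beta_k)$ prevents its extension to the right or left; writing out this impossibility as the failure of $R(\alpha_k,t)\le R(\alpha_k,\beta_k)$ for $t>\beta_k$ yields the comparison $f^*(\beta_k^+)/w(\beta_k^+)\le R(\alpha_k,\beta_k)$, and a symmetric argument on the left gives $R(\alpha_k,\beta_k)\le f^*(\alpha_k^-)/w(\alpha_k^-)$. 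Together with the obvious monotonicity of $f^*/w$ within $N$ (inherited from the monotonicity of $f^*$ and $1/w$ coupled with the absence of any averaging interval), this yields the decrease of $f^0/w$ on all of $I$; decrease of $f^0$ then follows since $w$ is decreasing.

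For (ii), the integral identity on m.l.i.\ is a tautology: $\int_\alpha^\beta f^0\,dm=R(\alpha,\beta)W(\alpha,\beta)=F(\alpha,\beta)=\int_\alpha^\beta f^*\,dm$, so for any $x\in N$ we have $\int_0^x f^0=\int_0^x f^*$ by summing over the m.l.i.\ contained in $(0,x)$ and over $N\cap(0,x)$ (where $f^0=f^*$). For $x$ inside an m.l.i.\ $(\alpha,\beta)$, the defining inequality $R(\alpha,x)\le R(\alpha,\beta)$ rewrites as $\int_\alpha^x f^*\,dm\le R(\alpha,\beta)W(\alpha,x)=\int_\alpha^x f^0\,dm$, and adding the identity $\int_0^\alpha f^*=\int_0^\alpha f^0$ gives $\int_0^x f^*\le\int_0^x f^0$; combined with $(f^0)^*=f^0$ from (i), this is exactly $f\prec f^0$.

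For (iii), the natural approach is to use the following variational description, which is already implicit in Halperin \cite[Theorem 3.7]{Ha53}: $f^0$ is the pointwise smallest non-negative function $h$ with $h/w$ decreasing satisfying $f^*\prec h$. Granted this characterization, the argument is short: from $f\prec g$ we get $f^*\prec g^*\prec g^0$; since $g^0/w$ is decreasing and $g^0$ majorizes $f^*$ in the H--L sense, minimality of $f^0$ yields $f^0\le g^0$ pointwise, which trivially implies $f^0\prec g^0$. The main obstacle is precisely establishing the minimality statement, and it is here that I would either cite Halperin or argue by a contradiction-plus-extremality argument on level intervals: any candidate $h$ with the stated properties must satisfy $\int_\alpha^\beta h\ge\int_\alpha^\beta f^*=R(\alpha,\beta)W(\alpha,\beta)$ on each m.l.i., while $h/w$ being decreasing and the level interval being maximal force $h\ge R(\alpha,\beta)w=f^0$ throughout $(\alpha,\beta)$, with pointwise equality $h\ge f^*=f^0$ on the complementary set~$N$ following from $f\prec h$ and the absence of averaging on $N$.
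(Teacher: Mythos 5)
The paper gives no proof of this Fact at all: each of (i)--(iii) is simply cited from Halperin \cite{Ha53} (Theorems 3.6, 3.2, 3.7), so your opening remark that the cleanest route is to invoke the cited theorems is exactly what the authors do. Of your self-contained sketches, only (ii) is essentially sound: the identity $\int_\alpha^\beta f^0\,dm=R(\alpha,\beta)W(\alpha,\beta)=F(\alpha,\beta)$ on each m.l.i., the inequality $R(\alpha,x)\le R(\alpha,\beta)$ inside an m.l.i., and $(f^0)^*=f^0$ (once (i) is known) do combine to give $f\prec f^0$ and the stated integral identities.

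The sketches for (i) and (iii), however, contain genuine errors. In (i), the monotonicity of $f^*/w$ on the complementary set $N$ is \emph{not} ``inherited from the monotonicity of $f^*$ and $1/w$'': since $w$ is decreasing, $1/w$ is increasing, and a quotient of two decreasing functions has no a priori monotonicity --- the whole reason the level function exists is that $f^*/w$ generally fails to be decreasing. That $f^0/w$ is decreasing, including across points of $N$, is precisely the nontrivial content of Halperin's Theorem 3.6 and requires a real argument using maximality. More seriously, the variational description underpinning your (iii) is false: $f^0$ is \emph{not} the pointwise smallest $h\ge 0$ with $h/w$ decreasing and $f^*\prec h$. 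Take $I=(0,\infty)$, $w\equiv 1$ and $f^*=\chi_{(0,1)}$; the unique m.l.i. is $(0,1)$ and $f^0=\chi_{(0,1)}$, yet $h(t)=2(1-t)_+$ has $h/w$ decreasing, satisfies $f^*\prec h$ (indeed $\int_0^x h=2x-x^2\ge x$ for $x\le 1$), and is strictly below $f^0$ on $(1/2,1)$. This also kills the step ``$\int_\alpha^\beta h\ge\int_\alpha^\beta f^*$ plus $h/w$ decreasing force $h\ge R(\alpha,\beta)w$ on $(\alpha,\beta)$.'' Likewise the intermediate conclusion you draw, $f^0\le g^0$ pointwise whenever $f\prec g$, is false: with $g^*=2\chi_{(0,1/2)}$ one has $f\prec g$, $g^0=2\chi_{(0,1/2)}$, but $f^0=1>0=g^0$ on $(1/2,1)$. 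The correct minimality is in the integrated sense --- $x\mapsto\int_0^x f^0\,dm$ is the least majorant of $\int_0^x f^*\,dm$ among functions $\int_0^x h\,dm$ with $h/w$ decreasing --- and with that statement your deduction of (iii) does go through; but that minimality is itself equivalent to Halperin's theorem, so as written your argument for (iii) has not replaced the citation, it has replaced it with a false lemma.
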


\begin{remark}\label{degenerate-level-function}
(1)  If $I=(0,a)$ with $a<\infty$ then  for every $f\in L_1$, $\|f\|_1 = \|f^0\|_1$ by (ii) in Fact \ref{LF-prop}. Therefore $f^0(t) < \infty$ for $t\in (0,a)$. 

(2)  If $I=(0,\infty)$ there exist functions $f\in L_1+L_\infty$ with a degenerate level function, that is $f^0\equiv \infty$ on $I$. Indeed, consider $f\equiv 1$ on $I$, then  $R(0,t)={t/ W(t)}$ is increasing. Hence $(0,\infty)$ is a m.l.i. of $f$, and if $\lim\limits_{t\to \infty} {t/W(t)}=\infty$ then $R(0,\infty)=\infty$, and so $f^0=R(0,\infty)\cdot w\equiv \infty$. 

 Note that if an interval $(a,\infty)$ with $a>0$ is  a m.l.i. of a function $f$ then $R(a,\infty)<\infty$ since $f^0(a)<\infty$ and $f^0$ is decreasing. Thus the only possible degenerate level function is identically equal to $\infty$ on $I=(0,\infty)$. For $f^0$ to be degenerate it is necessary and sufficient that $\limsup\limits_{t\to\infty}{F(t)/ W(t)} = \infty$.

(3)  When $I=(0,\infty)$ there are two simple cases where $f^0$ is non-degenerate.

\vspace{2mm}

(3a) Let $f\in L_1$.  Then $\lim\limits_{t\to\infty}{ F(t)/ W(t)} =\lim\limits_{t\to\infty} {(\int_0^t f^*) / W(t)} = {\|f\|_1/ W(\infty)} <\infty$.

If $W(\infty) = \infty$ and $(a,\infty)$, $a> 0$, is a m.l.i. of $f$, then $R(a,\infty) = 0$ and so $R(a,t) \le R(a,\infty)= 0$ for all $t>a$. Hence $f^*(t) = 0$ for $t>a$, and so 
$\|f\|_1 = \|f^0\|_1$ by (ii) in Fact \ref{LF-prop}, and consequently $f^0<\infty$ on $(0,\infty)$ and so  $f^0$ is non-degenerate.

If $W(\infty) < \infty$ and  
if $f$ has an infinite m.l.i. say $(a,\infty)$ with $a>0$, then for $t>a$ we have 
$f^0(t) = R(a,\infty) w(t) = \frac{F(a,\infty)}{W(a,\infty)}w(t)  < \infty$.  Clearly $f^0(t) < \infty$ for $t\in (0,a)$, and so $f^0$ is non-degenerate.   Moreover $\|f\|_1 = \|f^0\|_1$.

\vspace{2mm}

(3b) Let  $f\in M_W$. Then by definition we have $f\prec C w$ where $C=\|f\|_{M_W}$. Hence $f^0\prec C w^0$ by  (iii) of Fact \ref{LF-prop}. But $w^0=w$, and so $\int_0^t f^0 dm\le CW(t)$ for $t\in I$. Thus $f^0\in M_W$  with $\|f^0\|_{M_W}\le \|f\|_{M_W}$. Therefore $f^0$ is non-degenerate. In addition by $f\prec f^0$ we have  $\|f\|_{M_W}\le \|f^0\|_{M_W}$, and it follows the equality of norms $\|f\|_{M_W}= \|f^0\|_{M_W}$.
\end{remark}

\medskip
The above two simple cases (3a) and (3b) may be combined as follows.
\begin{lemma}\label{stability}
 $f\in L_1+M_W$ if and only if  $f^0\in L_1 + M_W$, and $\|f\|_{L_1 + M_W} = \|f^0\|_{L_1 + M_W}$.
\end{lemma}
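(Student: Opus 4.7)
The statement has two directions. The easier one, $f^0\in L_1+M_W\Rightarrow f\in L_1+M_W$ with $\|f\|_{L_1+M_W}\le \|f^0\|_{L_1+M_W}$, follows immediately from the submajorization $f\prec f^0$ (Fact \ref{LF-prop}(ii)) and the full symmetry of $L_1+M_W$.

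For the reverse inequality, my plan is to fix any decomposition $f=g+h$ with $g\in L_1$, $h\in M_W$, and to prove
\[
\|f^0\|_{L_1+M_W}\le \|g\|_1+\|h\|_{M_W};
\]
taking the infimum over such decompositions will then yield $f^0\in L_1+M_W$ with $\|f^0\|_{L_1+M_W}\le \|f\|_{L_1+M_W}$. The essential inputs are that by Remark \ref{degenerate-level-function}(3a)--(3b), the level-function operation preserves the $L_1$ and $M_W$ norms on $L_1$ and $M_W$ respectively, so $g^0\in L_1$ with $\|g^0\|_1=\|g\|_1$ and $h^0\in M_W$ with $\|h^0\|_{M_W}=\|h\|_{M_W}$; in particular $g^0+h^0\in L_1+M_W$ with norm at most $\|g\|_1+\|h\|_{M_W}$. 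It then suffices to establish the submajorization $f^0\prec g^0+h^0$, and I would do this by chaining: $f^*\prec g^*+h^*$ (since $f=g+h$); $g^*+h^*\prec g^0+h^0$ because $g^*\prec g^0$, $h^*\prec h^0$ by Fact \ref{LF-prop}(ii) and submajorization between decreasing functions is preserved by addition; hence $f^*\prec g^0+h^0$; and finally Fact \ref{LF-prop}(iii) delivers $f^0=(f^*)^0\prec (g^0+h^0)^0$.

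The one remaining ingredient, and the main technical step of the plan, is the identity $(g^0+h^0)^0=g^0+h^0$. Since $g^0/w$ and $h^0/w$ are decreasing by Fact \ref{LF-prop}(i), so is $(g^0+h^0)/w$, and the identity will follow from the general observation that any non-negative $u$ with $u/w$ decreasing satisfies $u^0=u$. I would verify this directly from Halperin's definition: on any $(\alpha,\beta)\subset I$ the ratio $R(\alpha,t)=\int_\alpha^t u\,dm / \int_\alpha^t w\,dm$ is a decreasing function of $t$, because its numerator's derivative with respect to $t$ yields a sign governed by $\int_\alpha^t[u(t)w(s)-w(t)u(s)]\,dm(s)\le 0$ when $u/w$ is decreasing. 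Hence $u$ has no non-degenerate maximal level interval, and on any degenerate one, $u^0=R(\alpha,\beta)w$ still coincides with $u$. Combining this identity with the submajorization above and the full symmetry of $L_1+M_W$ finishes the plan; an alternative, slightly slicker route would be to invoke Sinnamon's minimality characterization of $f^0$ as the smallest function $k$ with $k/w$ decreasing and $f\prec k$, from which $(g^0+h^0)^0=g^0+h^0$ is immediate.
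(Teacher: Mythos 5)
Your proof is correct and follows the same skeleton as the paper's: fix a decomposition $f=g+h$ with $g\in L_1$, $h\in M_W$, majorize $f^*$ by a sum of level-function-invariant pieces, apply Fact \ref{LF-prop}(iii), and conclude by the norm identities of Remark \ref{degenerate-level-function}. The difference lies in the key technical identity. The paper first replaces $h^*$ by the concrete majorant $Cw$ with $C=\|h\|_{M_W}$, so that it only needs the (asserted as ``easy to see'') identity $(g^*+Cw)^0=g^0+Cw$, which rests on $g^*+Cw$ and $g^*$ having the same maximal level intervals. You instead keep $h^0$ itself and reduce everything to the cleaner and more general fact that $u^0=u$ whenever $u/w$ is decreasing, applied to $u=g^0+h^0$; this is essentially Sinnamon's characterization of the level function as the least $w$-quasiconcave majorant, and arguably isolates the right structural reason why the argument works. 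Both routes cost about the same; yours proves its auxiliary fact rather than asserting it, which is a plus.

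One small imprecision in your verification of $u^0=u$: from $R(\alpha,\cdot)$ decreasing you conclude that $u$ has \emph{no} non-degenerate maximal level interval, but that does not follow --- the level-interval condition $R(\alpha,t)\le R(\alpha,\beta)$ combined with monotonicity forces $R(\alpha,\cdot)$ to be \emph{constant} on any level interval, and a constant positive $R$ is still admissible (take $u=w$, for which $(0,a)$ is a maximal level interval). The conclusion survives unchanged, because constancy of $R$ gives $u=R(\alpha,\beta)\,w$ a.e.\ on that interval and hence $u^0=R(\alpha,\beta)\,w=u$ there, exactly as in your treatment of the degenerate case; you should just phrase the non-degenerate case the same way rather than claiming such intervals do not exist.
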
 
\begin{proof}
 Assume $\|f\|_{L_1+M_W} <1$. We have $f=g+h$ with some $g\in L_1$, $h\in M_W$ such that $\|g\|_1+\|h\|_{M_W}<1$. Then $f^*\prec g^*+h^*\prec g^*+\|h\|_{M_W}w$. It follows that $f^0\prec (g^*+\|h\|_{M_W}w)^0$. It is easy to see that $g^*+Cw$ and $g^*$ have the same m.l.i. and that $(g^*+Cw)^0=g^0+Cw$, $C = \|h\|_{M_W}$. Then 
 \[
 \|f^0\|_{L_1+M_W}\le \|g^0 + Cw\|_{L_1 + M_W} \le \|g^0\|_1+\|h\|_{M_W}\|w\|_{M_W}=\|g\|_1+\|h\|_{M_W}<1.
 \]
  This shows that $\|f^0\|_{L_1+M_W}\le \|f\|_{L_1+M_W}$ for every $f\in L_1+M_W$. The converse inclusion and inequality follow from $f\prec f^0$.
\end{proof}

\begin{notamark}\label{notamark}
If $g, h\in L^0$ then we  write $g\prec_w h$ if $g^{*,w}\prec h^{*,w}$. Clearly if $h\in E_w$ and $g\prec_w h$ then $g\in E_w$ and $\|g\|_{E_w}\le \|h\|_{E_w}$. 
\end{notamark}
\begin{lemma}\label{M level-closed}
For $f\in M_{L_1+L_\infty,w}$ we have $\ds \frac{f^0}w\prec_w \frac {f^*}w$.
\end{lemma}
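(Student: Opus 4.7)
The plan is to unfold $\frac{f^0}{w} \prec_w \frac{f^*}{w}$ as the family of integral inequalities $\int_0^x (f^0/w)^{*,w}\, dm \le \int_0^x (f^*/w)^{*,w}\, dm$ for $x \in J$, and reduce each side to a manageable expression. By Fact \ref{LF-prop}(i), $f^0/w$ is decreasing, so $(f^0/w) \circ W^{-1}$ is decreasing on $J$, and Proposition \ref{prop:S=W}(i) lets us identify $(f^0/w)^{*,w}$ with $(f^0/w)\circ W^{-1}$. The change of variable $s = W(t)$ then turns the left-hand side, for $u := W^{-1}(x) \in I$, into $\int_0^u f^0\, dm$. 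The desired inequality thus becomes: for every $u \in I$,
\[
\int_0^u f^0\, dm \;\le\; \int_0^{W(u)} (f^*/w)^{*,w}\, dm.
\]

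Next, I would apply the $K$-functional formula \eqref{eq:1} to $f^*/w$ on the measure space $(I,\omega)$, together with the identity $(f^*/w - \lambda)_+ \, w = (f^* - \lambda w)_+$, to rewrite the right-hand side as $\inf_{\lambda > 0}\bigl[\int_I (f^* - \lambda w)_+\, dm + \lambda W(u)\bigr]$. It then suffices to prove that for every $u \in I$ and every $\lambda > 0$, $\int_0^u (f^0 - \lambda w)\, dm \le \int_I (f^* - \lambda w)_+\, dm$; bounding the left-hand side by $\int_I (f^0 - \lambda w)_+\, dm$ reduces the task to the clean comparison
\[
\int_I (f^0 - \lambda w)_+\, dm \;\le\; \int_I (f^* - \lambda w)_+\, dm \qquad (\ast)
\]
for every $\lambda > 0$. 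Note that $f^0$ is finite almost everywhere since $f \in M_{L_1+L_\infty,w} \subset Q_{L_1+L_\infty,w} \subset L_1 + M_W$ by Theorem \ref{class Q}(i), together with Lemma \ref{stability}.

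To prove $(\ast)$, I would decompose $I$ into the at most countable family of maximal level intervals $(\alpha_n, \beta_n)$ of $f^*$ relative to $w$, together with the complementary set $N$ on which $f^0 = f^*$ (so the two integrands agree there). On each $(\alpha_n, \beta_n)$, $f^0 = R_n w$ with $R_n = R(\alpha_n, \beta_n)$, so $\int_{\alpha_n}^{\beta_n} (f^0 - \lambda w)_+\, dm = (R_n - \lambda)_+\, W(\alpha_n, \beta_n)$; this vanishes when $R_n \le \lambda$, while when $R_n > \lambda$ it equals $F(\alpha_n, \beta_n) - \lambda W(\alpha_n, \beta_n) = \int_{\alpha_n}^{\beta_n} (f^* - \lambda w)\, dm \le \int_{\alpha_n}^{\beta_n} (f^* - \lambda w)_+\, dm$. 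Summing over all maximal level intervals and $N$ yields $(\ast)$; a possible degenerate infinite m.l.i.\ $(\alpha, \infty)$ with $W(\alpha, \infty) = \infty$ is handled by the same formula, with both sides being infinite for $\lambda < R(\alpha, \infty)$ and the left-hand contribution vanishing for $\lambda \ge R(\alpha, \infty)$. The main obstacle is really the opening reduction: noticing that $(f^0/w)^{*,w}$ admits the explicit expression $(f^0/w)\circ W^{-1}$ thanks to monotonicity of $f^0/w$, while $(f^*/w)^{*,w}$ must be handled through the $K$-functional formula \eqref{eq:1}; once this asymmetric treatment is accepted, the remaining comparison $(\ast)$ is almost immediate from Halperin's defining description of $f^0$ as a constant multiple of $w$ on each level interval.
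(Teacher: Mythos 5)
Your proof is correct and follows essentially the same route as the paper: both reduce the statement, via the formula (\ref{eq:1}), to the single inequality $\int_I(f^0-\lambda w)_+\,dm\le\int_I(f^*-\lambda w)_+\,dm$ for each $\lambda>0$, and both prove it by summing over the maximal level intervals, on which $f^0=R(\alpha,\beta)\,w$ (your explicit computation of $(f^0/w)^{*,w}$ via monotonicity, versus the paper's use of (\ref{eq:1}) on both sides, is only a cosmetic difference). The one place your write-up is loose is the degenerate m.l.i.\ $(\alpha,\infty)$ when $F(\alpha,\infty)=W(\alpha,\infty)=\infty$, where ``the same formula'' gives $\infty-\infty$; the paper repairs exactly this by choosing $t_n\uparrow\infty$ with $R(\alpha,t_n)\uparrow R(\alpha,\infty)$ and letting $n\to\infty$ in $\int_\alpha^{t_n}(f^*-\lambda w)_+\,dm\ge (R(\alpha,t_n)-\lambda)_+W(\alpha,t_n)$ --- a one-line fix of your argument.
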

\begin{proof}

Note that the hypothesis $f\in M_{L_1+L_\infty,w}$ is the right one for ensuring that $f^*/w$ is locally integrable in measure $\omega$, that is integrable on every set of finite measure $\omega$. It implies also that $f^*\in L_1+M_{L_\infty,w}\subset L_1+M_W$  (see Example \ref{ex:9.2}), thus by Lemma \ref{stability}, the level function  $f^0<\infty$ belongs to $L_1+M_W$ too.

By (\ref{eq:1}) we have to prove that for each $x\in J$,
\begin{align}\label{ineq-toprove}
 \inf_{\lambda>0}  \bigg[\int_I \big(f^0-\lambda w\big)_+ \,dm +\lambda x \bigg]
 &\le \inf_{\lambda>0}  \bigg[\int_I \big(f^*-\lambda w\big)_+ \,dm +\lambda x \bigg].  
 \end{align}
If $(\alpha,\beta)\subset I$ is a non-degenerate m.l.i. of $f^*$  we have for any $\lambda >0$,
\begin{align*}
\int_\alpha^\beta \big(f^*-\lambda w\big)_+ \,dm &\ge \bigg(\int_\alpha^\beta \big(f^*-\lambda w\big)\,dm\bigg)_+ \\
&= (F(\alpha,\beta)-\lambda W(\alpha,\beta)\big)_+ = \big(R(\alpha,\beta)-\lambda)_+W(\alpha,\beta)\\
&=\int_\alpha^\beta \big(R(\alpha,\beta)-\lambda)_+ wdm= \int_\alpha^\beta \big(f^0-\lambda w)_+ \,dm.
\end{align*}
Consider now a degenerate m.l.i. $(\alpha,\infty)$ of $f^*$. Since $R(\alpha, t) \le R(\alpha, \infty)$ for all $t \ge \alpha$ and $R(\alpha, \infty) = \limsup_{t\to\infty} R(\alpha,t)$, there exists a sequence $(t_n)$  such that $t_n\uparrow\infty$ with $R(\alpha,t_n)\uparrow R(\alpha,\infty)$. Then as above we have for each  $n\in\mathbb{N}$,
\[
\int_\alpha^{t_n} \big(f^*-\lambda w\big)_+ \,dm \ge \int_\alpha^{t_n} \big(R(\alpha,t_n)-\lambda)_+ wdm.
\]
Passing to the limit $n\to \infty$ we obtain
\begin{align*}
\int_\alpha^\infty \big(f^*-\lambda w\big)_+ \,dm &\ge \int_\alpha^\infty \big(R(\alpha,\infty)-\lambda)_+ wdm\\
&=  \int_\alpha^\infty \big(f^0-\lambda w)_+ \,dm.
\end{align*}
On the complementary set $C$ of the union of all the m.l.i. we have $f^0=f^*$, and thus
\[ \int_C  \big(f^0-\lambda w\big)_+ \,dm = \int_C \big(f^*-\lambda w\big)_+ \,dm. \]
Adding this equality with all the inequalities we obtained on each m.l.i. we get
\[ \int_I  \big(f^0-\lambda w\big)_+ \,dm \le \int_I \big(f^*-\lambda w\big)_+ \,dm, \]
which implies (\ref{ineq-toprove}).
\end{proof}
If $f\in M_{E,w}$ then $f\in M_{L_1 + L_\infty,w}$ and so by Lemma \ref{M level-closed},  $\frac{f^0}{w} \prec_w \frac{f^*}{w}$, and so by Notation \& Remark \ref{notamark},  $\|f^0\|_{M_{E,w}} = \|f^0/w\|_{E_w} \le \|f^*/w\|_{E_w} = \|f\|_{M_{E,w}}$. Thus we get the next result.
\begin{lemma}\label{immed-cor}
If $f\in M_{E,w}$ then $f^0\in M_{E,w}$ and $\|f^0\|_{M_{E,w}}\le \|f\|_{M_{E,w}}$.
\end{lemma}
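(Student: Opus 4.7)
The plan is to combine the preceding Lemma \ref{M level-closed} with the fact that $E_w$ is a fully symmetric space with respect to the measure $\omega$, as noted in Notation \& Remark \ref{notamark}. The argument is essentially already laid out in the paragraph preceding the statement, so my plan is simply to formalize it.

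First I would verify that the hypothesis of Lemma \ref{M level-closed} applies, i.e. that $M_{E,w}\subset M_{L_1+L_\infty,w}$. This follows from the embedding $E\hookrightarrow L_1+L_\infty(J)$ (cf.\ \cite[Ch.\ II, Theorem 4.1]{KPS}), which yields $E_w\hookrightarrow (L_1+L_\infty)_w = L_1(I,\omega)+L_\infty(I,\omega)$, and hence $M_{E,w}\subset M_{L_1+L_\infty,w}$. In particular $f^*/w$ is locally $\omega$-integrable, so $f^0$ is well defined.

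Next I would apply Lemma \ref{M level-closed} to obtain
\[
\frac{f^0}{w}\prec_w \frac{f^*}{w}.
\]
Since $E_w$ is a fully symmetric Banach function space on $(I,\omega)$ (by Proposition \ref{prop:S=W}), Notation \& Remark \ref{notamark} applies: from $f^*/w\in E_w$ (which is exactly the assumption $f\in M_{E,w}$) we conclude that $f^0/w\in E_w$ with
\[
\Bigl\|\frac{f^0}{w}\Bigr\|_{E_w}\le \Bigl\|\frac{f^*}{w}\Bigr\|_{E_w}.
\]

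Finally, since $f^0$ is itself decreasing (Fact \ref{LF-prop}(i)), we have $(f^0)^*=f^0$, so the displayed inequality reads
\[
\|f^0\|_{M_{E,w}} = \Bigl\|\frac{(f^0)^*}{w}\Bigr\|_{E_w} = \Bigl\|\frac{f^0}{w}\Bigr\|_{E_w}\le \Bigl\|\frac{f^*}{w}\Bigr\|_{E_w} = \|f\|_{M_{E,w}},
\]
which gives both membership $f^0\in M_{E,w}$ and the norm inequality. No step is delicate here; the entire substance of the argument was isolated in the submajorization inequality of Lemma \ref{M level-closed}, and this lemma is just its transcription into the $M_{E,w}$ setting.
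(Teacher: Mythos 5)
Your proposal is correct and follows exactly the paper's own argument, which appears in the paragraph immediately preceding the lemma: pass from $f\in M_{E,w}$ to $f\in M_{L_1+L_\infty,w}$, apply Lemma \ref{M level-closed} to get $\frac{f^0}{w}\prec_w\frac{f^*}{w}$, and conclude via Notation \& Remark \ref{notamark} together with the fact that $f^0$ is decreasing. Your write-up is in fact slightly more careful than the paper's, since it makes explicit the embedding $E\hookrightarrow L_1+L_\infty(J)$ and the identity $(f^0)^*=f^0$.
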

Now we prove that submajorization for level functions implies an inequality for their gauges in $M_{E,w}$.
\begin{lemma}\label{M submajo-closed}
For $f,g\in L_1+L_\infty$,   $f^0\prec g^0$ if and only if $\ds \frac {f^0}w\prec_w \frac {g^0}w$. Consequently,  if $f\prec g$ and $g^0\in M_{E,w}$ then  $f^0\in M_{E,w}$, and moreover $\|f^0\|_{M_{E,w}}\le \|g^0\|_{M_{E,w}}$.
\end{lemma}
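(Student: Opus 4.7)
The whole statement rests on a single change--of--variable identity, plus the fact that level functions are always decreasing. First observe that by Fact \ref{LF-prop}(i) both $f^0/w$ and $g^0/w$ are decreasing on $I$, and in particular $f^0,g^0$ themselves are decreasing, so $(f^0)^*=f^0$ and $(g^0)^*=g^0$. Since $W\colon I\to J$ is a strictly increasing homeomorphism and $f^0/w$ is decreasing, the composition $(f^0/w)\circ W^{-1}$ is decreasing on $J$, hence by Proposition \ref{prop:S=W}(i) it coincides with its own decreasing rearrangement and equals $(f^0/w)^{*,w}$; similarly for $g$.

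Next I would perform the substitution $t=W(s)$, $dt=w(s)\,ds$, to obtain, for each $x\in J$ with $y=W^{-1}(x)\in I$,
\[
\int_0^x \Bigl(\frac{f^0}{w}\circ W^{-1}\Bigr)(t)\,dt = \int_0^{y} f^0\,dm,
\]
and analogously for $g$. Thus $\frac{f^0}{w}\prec_w \frac{g^0}{w}$ is equivalent to $\int_0^y f^0\,dm \le \int_0^y g^0\,dm$ for every $y\in I$. Because $f^0$ and $g^0$ are already decreasing, this in turn is exactly the statement $f^0\prec g^0$. This settles the equivalence.

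For the ``consequently'' part, assume $f\prec g$ and $g^0\in M_{E,w}$. By Fact \ref{LF-prop}(iii) we have $f^0\prec g^0$, hence by what was just proved $\frac{f^0}{w}\prec_w \frac{g^0}{w}$. Since $g^0\in M_{E,w}$ means $g^0/w\in E_w$, and since $E_w$ is a fully symmetric space on $(I,\omega)$, the Notation \& Remark \ref{notamark} gives $f^0/w\in E_w$ with $\|f^0/w\|_{E_w}\le \|g^0/w\|_{E_w}$. As $f^0$ is decreasing, $(f^0)^*=f^0$, so this is precisely $f^0\in M_{E,w}$ together with $\|f^0\|_{M_{E,w}}\le \|g^0\|_{M_{E,w}}$.

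The only potential subtlety is making sure the substitution is valid when $W^{-1}$ has flat parts or when $f^0$ is allowed to be degenerate at $\infty$; but since $f,g\in L_1+L_\infty$ and $W<\infty$ on $I$ is already standing in this subsection, the change of variable is the standard one used throughout the paper (as in Remark \ref{rem:isom-E-E_v}), and the argument goes through without further fuss. So no step is genuinely hard --- the proof is essentially a transparent bookkeeping once one notices that level functions divided by $w$ remain decreasing.
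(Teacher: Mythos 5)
Your proof is correct and follows essentially the same route as the paper's: both rest on the observation that $f^0/w$ and $g^0/w$ are decreasing (Fact \ref{LF-prop}(i)), so that by Proposition \ref{prop:S=W}(i) and the change of variables $t=W(s)$ the partial integrals $\int_0^x (f^0/w)^{*,w}\,dm$ reduce to $\int_0^{W^{-1}(x)} f^0\,dm$, making the two submajorizations manifestly equivalent; the ``consequently'' part then follows from Fact \ref{LF-prop}(iii) and Notation \& Remark \ref{notamark} exactly as in the paper.
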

\begin{proof}
Let $f^0\prec g^0$. By Fact \ref{LF-prop}(i) the functions $f^0/w$, $g^0/w$ are both decreasing. Therefore by Proposition \ref{prop:S=W} (i), for $x\in J$,

\begin{align*}
\int_0^x \kern -2pt\left(\frac{f^0}w\right)^{*,w}\kern -3pt dm &= \int_0^x  \kern -2pt \bigg(\frac{f^0}w\circ W^{-1}\bigg)^* \kern -2pt dm\kern -1pt = \kern -2pt  \int_0^x  \kern -2pt\frac{f^0}w\circ W^{-1}  dm = \int_0^{W^{-1}(x)}  \frac{f^0}w wdm \\ & = \int_0^{W^{-1}(x)}  \kern-4pt f^0\,dm
\le  \int_0^{W^{-1}(x)}  \kern-4pt g^0 \,dm = \int_0^x \kern -2pt\left(\frac{g^0}w\right)^{*,w}\kern -3pt dm,
\end{align*}
and so  $\ds \frac {f^0}w\prec_w \frac {g^0}w$. The proof of the opposite implication is similar. 

Now by Fact \ref{LF-prop}(iii) if $f\prec g$ then  $f^0\prec g^0$,
and by the preceding $\ds \frac {f^0}w\prec_w \frac {g^0}w$, which implies that $\|f^0\|_{M_{E,w}} = \|f^0/w\|_{E_w} \le \|g^0/w\|_{E_w} = \|g^0\|_{M_{E,w}}$. 

\end{proof}

\begin{theorem}\label{th:Q-by-LF}
A function $f\in L_1+M_W$ belongs to $Q_{E,w}$ if and only if its level function $f^0$ relative to $w$ belongs to $M_{E,w}$, and then  $\|f\|_{Q_{E,w}}=\|f^0\|_{M_{E,w}}$.
\end{theorem}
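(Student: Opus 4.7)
The plan is to derive the theorem directly from the three lemmas already established (Fact \ref{LF-prop}, Lemma \ref{immed-cor}, and Lemma \ref{M submajo-closed}); the bulk of the work has been done there, so what remains is a short chain of inequalities in both directions together with the usual $\inf$ argument.

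First I would handle the easy direction. Suppose $f^0\in M_{E,w}$. By Fact \ref{LF-prop}(ii) we have $f\prec f^0$, so $f^0$ is admissible in the defining infimum of $\|f\|_{Q_{E,w}}$. Hence $f\in Q_{E,w}$ and
\[
\|f\|_{Q_{E,w}}\le \|f^0\|_{M_{E,w}}.
\]

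For the converse direction, I assume $f\in Q_{E,w}$. Given $\varepsilon>0$, pick $g\in M_{E,w}$ with $f\prec g$ and
\[
\|g\|_{M_{E,w}}\le (1+\varepsilon)\|f\|_{Q_{E,w}}.
\]
By Fact \ref{LF-prop}(iii), $f\prec g$ implies $f^0\prec g^0$, so Lemma \ref{M submajo-closed} gives $f^0\in M_{E,w}$ with $\|f^0\|_{M_{E,w}}\le \|g^0\|_{M_{E,w}}$. Lemma \ref{immed-cor} then yields $\|g^0\|_{M_{E,w}}\le \|g\|_{M_{E,w}}$. Chaining these,
\[
\|f^0\|_{M_{E,w}}\le \|g\|_{M_{E,w}}\le (1+\varepsilon)\|f\|_{Q_{E,w}},
\]
and letting $\varepsilon\downarrow 0$ gives $\|f^0\|_{M_{E,w}}\le \|f\|_{Q_{E,w}}$. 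Combined with the first direction, this yields $\|f\|_{Q_{E,w}}=\|f^0\|_{M_{E,w}}$ and also the set-theoretic equivalence. Note that the hypothesis $f\in L_1+M_W$ is used only to guarantee, via Lemma \ref{stability}, that $f^0$ is well-defined (finite a.e.) so that the statement $f^0\in M_{E,w}$ makes sense; in the direction $f\in Q_{E,w}\Rightarrow f^0\in M_{E,w}$ this hypothesis is automatic by Theorem \ref{class Q}(i).

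There is not really a main obstacle: once Lemmas \ref{immed-cor} and \ref{M submajo-closed} are in hand, the theorem is essentially a two-line argument. The only point requiring a word of care is that the infimum in the definition of $\|\cdot\|_{Q_{E,w}}$ is attained by $g=f^0$ itself, which is precisely what the combined inequalities exhibit.
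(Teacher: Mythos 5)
Your proposal is correct and follows essentially the same route as the paper: the forward direction uses $f\prec f^0$ from Fact \ref{LF-prop}(ii) to make $f^0$ admissible in the infimum, and the converse chains Lemmas \ref{immed-cor} and \ref{M submajo-closed} to get $\|f^0\|_{M_{E,w}}\le\|g^0\|_{M_{E,w}}\le\|g\|_{M_{E,w}}$ for any majorant $g$, then takes the infimum (your $\varepsilon$-argument is just a rephrasing of that). The only cosmetic remark is that $g^0\in M_{E,w}$, needed as a hypothesis of Lemma \ref{M submajo-closed}, is itself supplied by Lemma \ref{immed-cor}, so that lemma is doing double duty slightly earlier than where you cite it.
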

\begin{proof}
If $f\in L_1+M_W$ then $f^*, f^0<\infty$ on $I$ and $f\prec f^0$ by Fact \ref{LF-prop} (ii). Thus if $f^0\in M_{E,w}$, then $f\in Q_{E,w}$, and $\|f\|_{Q_{E,w}}\le \|f^0\|_{M_{E,w}}$. Conversely if $f\in Q_{E,w}$, there is $g\in M_{E,w}$ with $f\prec g$, and for any such $g$  we have by Lemmas  \ref{immed-cor} and \ref{M submajo-closed} that $g^0\in M_{E,w}$ and $f^0\in M_{E,w}$ and moreover
\[\|f^0\|_{M_{E,w}}\le \|g^0\|_{M_{E,w}}\le\|g\|_{M_{E,w}}.
\]
It follows that $\|f^0\|_{M_{E,w}}\le \|f\|_{Q_{E,w}}$.
\end{proof}

\begin{proposition}\label{Fatou for Q}
If $E$ has the Fatou property then so has $Q_{E,w}$, and moreover $(\Lambda_{E',w})'=Q_{E,w}$ with equal norms.
\end{proposition}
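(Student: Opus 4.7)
The plan is to first establish the Fatou property of $Q_{E,w}$; the duality $(\Lambda_{E',w})'=Q_{E,w}$ will then follow immediately from Theorem \ref{class Q}(iv) together with the standard fact that a Banach function space coincides isometrically with its second K\"othe dual if and only if it has the Fatou property. Indeed, once Fatou is known, $Q_{E,w}=(Q_{E,w})''=(\Lambda_{E',w})'$ isometrically.

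For the Fatou property, let $0\le f_n\uparrow f$ a.e.\ with $K:=\sup_n\|f_n\|_{Q_{E,w}}<\infty$. Theorem \ref{th:Q-by-LF} delivers $\|f_n^0\|_{M_{E,w}}\le K$, while Theorem \ref{class Q}(i) combined with the Fatou property of $L_1+M_W$ forces $f\in L_1+M_W$; Lemma \ref{stability} then ensures that the level function $f^0$ is finite. The heart of the argument is to prove that $f_n^0\to f^0$ a.e.\ on $I$. Set $F_n(t)=\int_0^t f_n^*\,dm$, $F(t)=\int_0^t f^*\,dm$, $G_n(t)=\int_0^t f_n^0\,dm$ and $G(t)=\int_0^t f^0\,dm$. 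Fact \ref{LF-prop}(i) makes $f_n^0/w$ and $f^0/w$ decreasing, so $\tilde G_n:=G_n\circ W^{-1}$ and $\tilde G:=G\circ W^{-1}$ are concave on $J$; a direct reading of Halperin's construction identifies $\tilde G$ (respectively $\tilde G_n$) as the \emph{least} concave majorant of $\tilde F:=F\circ W^{-1}$ (resp.\ $\tilde F_n$), since outside the reparametrized m.l.i.\ one has $\tilde G=\tilde F$, while on each m.l.i.\ $\tilde G$ is affine and matches $\tilde F$ at the endpoints, so no concave majorant can go below it. Monotone convergence gives $\tilde F_n\uparrow\tilde F$, and monotonicity of the least concave majorant operation forces $\tilde G_n$ to increase pointwise to some concave $H\le\tilde G$; but $H\ge\tilde F_n$ for all $n$ implies $H\ge\tilde F$, so minimality of $\tilde G$ gives $H=\tilde G$. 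Hence $\tilde G_n\uparrow\tilde G$ pointwise, and the classical theorem that a monotone pointwise limit of concave functions has a.e.\ convergent derivatives yields $(f_n^0/w)\circ W^{-1}\to(f^0/w)\circ W^{-1}$ a.e.\ on $J$.

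To conclude, observe that by Proposition \ref{properties class M}(iv), $\|(f_n^0/w)\circ W^{-1}\|_E=\|f_n^0\|_{M_{E,w}}\le K$, so the Fatou property of $E$ in its lower-semicontinuity form (obtained from the ordinary formulation via $\inf_{k\ge n}h_k\uparrow\liminf_n h_n$) places $(f^0/w)\circ W^{-1}$ in $E$ with norm at most $K$. Therefore $f^0\in M_{E,w}$ with $\|f^0\|_{M_{E,w}}\le K$, and Theorem \ref{th:Q-by-LF} yields $f\in Q_{E,w}$ with $\|f\|_{Q_{E,w}}=\|f^0\|_{M_{E,w}}\le K$. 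The reverse inequality $K\le\|f\|_{Q_{E,w}}$ is immediate from solidity, so $\|f_n\|_{Q_{E,w}}\uparrow\|f\|_{Q_{E,w}}$, proving the Fatou property.

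The principal obstacle is the nonmonotonicity of the sequence $f_n^0$ in $n$: the maximal level intervals depend on $n$ in an intricate way, so attempting to compare $f_n^0$ and $f_{n+1}^0$ pointwise is awkward. The trick is to ascend to the antiderivatives $G_n$, where the least concave majorant formalism makes monotonicity transparent, and only then to descend back to the derivatives via the classical differentiation theorem for monotone families of concave functions.
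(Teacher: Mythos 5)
Your proof is correct, but it takes a genuinely different route from the paper's. The paper does not attempt to show that $f_n^0$ converges to $f^0$: it sets $g_n=f_n^0$, notes $f_n\prec g_n\prec f^0$, extracts via Helly's selection theorem a subsequence converging a.e.\ to some decreasing $g$, places $g$ in $M_{E,w}$ with $\|g\|_{M_{E,w}}\le K$ by the Fatou property of $M_{E,w}$ (Proposition \ref{properties class M}(v)), and then proves $f\prec g$ by hand, splitting $\int_0^t g_n$ at a small $\alpha$ and using dominated convergence on $[\alpha,t]$ together with $\int_0^\alpha g_n\le\int_0^\alpha f^0\to 0$. You instead prove the stronger statement $f_n^0\to f^0$ a.e., by identifying $\tilde G=\bigl(\int_0^{\cdot}f^0\bigr)\circ W^{-1}$ as the least concave majorant of $\tilde F=\bigl(\int_0^{\cdot}f^*\bigr)\circ W^{-1}$ and exploiting monotonicity of that operation plus a.e.\ convergence of derivatives of pointwise-convergent concave functions. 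This is essentially the ``shorter proof'' that the authors explicitly acknowledge in the Remark following the proposition and deliberately avoid, precisely because it requires the $W$-concave-majorant characterization of level functions (Sinnamon, \cite{FLM}); your version has the merit of making that characterization self-contained rather than citing it. What each approach buys: the paper's argument uses only Helly's theorem and elementary estimates, at the cost of an opaque limit $g$ and a delicate $\eps(\alpha)$ computation; yours is more structural, identifies the limit explicitly, and the key convergence $f_n^0\to f^0$ is reusable.

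One step you should tighten: the claim that no concave majorant of $\tilde F$ can go below $\tilde G$ is justified by ``matching at the endpoints'' only for non-degenerate maximal level intervals $(\alpha,\beta)$ with $\beta<\infty$. For a degenerate m.l.i.\ $(\alpha,\infty)$ there is no right endpoint at which $\tilde G=\tilde F$, and the slope $R(\alpha,\infty)$ is defined as a $\limsup$; you need to pick $t_n\uparrow\infty$ with $R(\alpha,t_n)\to R(\alpha,\infty)$ and run the chord argument through the points $W(t_n)$ before passing to the limit. (The finiteness of the least concave majorant, needed for this to make sense, is exactly what $f\in L_1+M_W$ and Lemma \ref{stability} provide, as you note.) With that addition the argument is complete.
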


\begin{proof}
If $Q_{E,w}$ has the Fatou property, then since by Theorem \ref{class Q} we have $\Lambda_{E',w}=(Q_{E,w})'$ with equal norms, it follows that  $(\Lambda_{E',w})'=(Q_{E,w})^{''}=Q_{E,w}$ with equal norms.

It remains to  prove that $Q_{E,w}$ has the Fatou property when $E$ has the property.
Let $f_n\uparrow f$ a.e. with $f_n\in Q_{E,w}$ and $\sup_n \|f_n\|_{Q_{E,w}}=K<\infty$. Since by Theorem \ref{class Q} (i), $\|f_n\|_{L_1+M_W}\le C \|f_n\|_{Q_{E,w}}$  and $L_1+M_W$ has the Fatou property, we have that $f\in L_1+M_W$, and $f_n^*\uparrow f^*$ a.e..

Letting $g_n=f_n^0$,  by Theorem \ref{th:Q-by-LF} we have $g_n\in M_{E,w}$ with $\|g_n\|_{M_{E,w}}=\|f_n\|_{Q_{E,w}}$. Moreover $g_n$ and $f^0$ are decreasing and $f_n\prec g_n\prec f^0$ by Fact \ref{LF-prop}.  Now by Helly's Selection Theorem \cite[Chapter 8, Section 4]{N} we may find a subsequence $(g_{n_k})$ which converges a.e. to some $g$. By Proposition \ref{properties class M}, $M_{E,w}$ has the Fatou property and so
 $g\in M_{E,w}$ with $\|g\|_{M_{E,w}}\le \liminf \|g_{n_k}\|_{M_{E,w}}= \liminf \|f_{n_k}\|_{Q_{E,w}}\le K$. 
 If we show that $f\prec g$ then $f\in Q_{E,w}$, and 
 \[
 \limsup \|f_{n_k}\|_{Q_{E,w}} \le \|f\|_{Q_{E,w}} \le \|g\|_{M_{E,w}} \le \liminf \|f_{n_k}\|_{Q_{E,w}},
 \]
  which shows that $\|f_n\|_{Q_{E,w}} \uparrow \|f\|_{Q_{E,w}}$, the Fatou property of $Q_{E,w}$.
  
  Assume further 
without loss of generality that $g_n\to g$ a.e..  By the monotone convergence theorem we get for  $t\in I$,
\[ 
\lim_n\int_0^t f^*_n\,dm = \int_0^t f^*\,dm. 
\]
For every $\alpha\in I$ there is  $N$ such that for $n>N$ we have  $g_n(\alpha)\le g(\alpha^-)+ 1 <\infty$, and so for all $t\ge \alpha$,  $n>N$, $g_n(t) \le g(\alpha^-) + 1$.  Now by the Lebesgue Dominated Convergence Theorem, for $t\in I$,
\[ 
\lim_n \int_\alpha^t g_n\,dm = \int_\alpha^t g\,dm.
\]
On the other hand by $g_n \prec f^0$,
\[ 
\int_0^\alpha g_n\,dm\le  \int_0^\alpha f^0\,dm =: \eps(\alpha),
\]
where the function $\eps(\cdot)$ depends only on $f$  and $w$, and it is continuous because $f^0\in L_1+M_W$ by Lemma \ref{stability} and thus is not degenerate. Then since $f_n\prec g_n$, for every $t\in I$,  
\[ 
\int_0^t f^*\,dm=\lim_n\int_0^t f^*_n\,dm\le\limsup_n \int_0^t g_n\,dm\le \eps(\alpha)+ \int_\alpha^t g\,dm \le \eps(\alpha)+ \int_0^t g\,dm.
\]
Since $\eps(\alpha)\to 0$ when $\alpha\to 0^+$, we get for all $t\in I$,
\[
\int_0^t f^*\,dm\le \int_0^t g\,dm,
 \]
and we obtain that $f\prec g$ as desired. 
\end{proof}

\begin{remark}
 A shorter proof of Proposition \ref{Fatou for Q} can be given using the fact that the level functions of an increasing sequence of functions form themselves an increasing sequence, and if $f_n$ converge to $f$ a.e. then $f_n^0$ converge to $f^0$ a.e..  This result was given by G. Sinnamon for his version of level functions, in the special case of a uniformly bounded sequence on a right-finite interval \cite{S94}. It may be transferred to Halperin's level functions using the results of \cite{FLM}, in the corresponding special case of functions in $M_{L^\infty, w}$ on a finite interval while the correct frame for our study is that of functions having a  $W$-concave majorant  \cite{FLM}.  The proof presented above  avoids this problem and moreover it uses only Halperin's reference paper \cite{Ha53} for the sake of bibliographical simplicity. 
\end{remark}

Despite that the space $E_w$ is considered over $I$ with the measure $d\omega = wdm$, the space $(E_w)'$ will  always denote its K\"othe dual computed with respect to the Lebesgue measure $m$ on $I$ as it is done below.

\begin{lemma}\label{lem:associate}
For any $f\in (E_w)'$ we have $\|f\|_{(E_w)'} = \left\|\frac{f}{w}\right\|_{(E')_w}$. Moreover $(E_w)'' = (E'')_w$ with equality of norms. 
\end{lemma}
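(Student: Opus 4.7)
The plan is to exploit the order isometry between $E_w$, viewed as a symmetric space on $(I,\omega)$ with $\omega=w\,dm$, and $E$ on $(J,m)$ provided by the measure-preserving bijection $S:I\to J$ of Remark \ref{rem:21}. The statement $(E_w)'=(E')_w/w$ then becomes a bookkeeping exercise in switching between the two reference measures $m$ and $\omega$.

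First I compute the K\"othe dual norm $\|f\|_{(E_w)'}$. Since $w>0$ on $I$, I may write $\int_I fg\,dm=\int_I (f/w)g\,d\omega$, so the supremum defining the dual norm becomes a supremum with respect to $\omega$. Transport via $S$ to $J$:
\[
\int_I (f/w)g\,d\omega=\int_J ((f/w)\circ S^{-1})(g\circ S^{-1})\,dm.
\]
The map $T:g\mapsto g\circ S^{-1}$ is a bijective order isometry from $E_w$ onto $E$ by Remark \ref{rem:21}, so the supremum over $\{g\in E_w:\|g\|_{E_w}\le 1\}$ equals the supremum over the unit ball of $E$ in $L^0(J,m)$, which by definition evaluates to $\|(f/w)\circ S^{-1}\|_{E'}$. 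Finally, by the equimeasurability identity~(\ref{eq:distrib}) applied to $f/w$, the functions $(f/w)\circ S^{-1}$ (for $m$) and $f/w$ (for $\omega$) share the same decreasing rearrangement, so full symmetry of $E'$ gives
\[
\|(f/w)\circ S^{-1}\|_{E'}=\|((f/w)\circ S^{-1})^*\|_{E'}=\|(f/w)^{*,w}\|_{E'}=\|f/w\|_{(E')_w},
\]
which establishes the first equality.

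For the second assertion, I apply the first equality to $E'$ in place of $E$: since $E'$ is fully symmetric, Banach, with full support whenever $E$ is, the formula yields $\|h\|_{((E')_w)'}=\|h/w\|_{(E'')_w}$ for every $h\in L^0(I)$. Now, using the first equality to parametrize the unit ball of $(E_w)'$ as $\{g:\|g/w\|_{(E')_w}\le 1\}$, and substituting $h=g/w$ inside the defining supremum,
\[
\|f\|_{(E_w)''}=\sup\left\{\int_I fg\,dm:\|g/w\|_{(E')_w}\le 1\right\}=\sup\left\{\int_I (fw)h\,dm:\|h\|_{(E')_w}\le 1\right\}=\|fw\|_{((E')_w)'}.
\]
Applying the formula already obtained for $E'$ then gives $\|fw\|_{((E')_w)'}=\|fw/w\|_{(E'')_w}=\|f\|_{(E'')_w}$, finishing the proof.

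I do not anticipate a serious obstacle: everything is routine once the measure-preserving map $S$ is in hand. The one point requiring care is that the construction $F\mapsto F_w$ and the identification above are used twice (for $E$ and for $E'$), so I should make sure both halves of Remark \ref{rem:21} apply uniformly, both when $W<\infty$ on $I$ (where $S=W$) and in the degenerate regime (where $S$ is constructed via $U\circ W_\alpha$); the argument is measure-theoretic and blind to this distinction.
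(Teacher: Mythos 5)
Your proposal is correct and follows essentially the same route as the paper: rewrite $\int_I fg\,dm$ as an integral against $d\omega$, transport to $(J,m)$ by the measure-preserving identification (the paper uses $W^{-1}$ directly under the standing hypothesis $W<\infty$, you use the map $S$ of Remark \ref{rem:21}), read off the dual norm in $E'$, and then obtain the bidual statement by applying the first identity once more with $E'$ in place of $E$ after the substitution $h=g/w$. The only difference is cosmetic: your phrasing via $S$ also covers the degenerate case $W\equiv\infty$, which the paper does not need here.
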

\begin{proof} In view of Proposition \ref{prop:S=W} we get 
\begin{align*}
\|f\|_{(E_w)'} &= \sup \left\{\int_I |f| g \, dm: \|g\|_{E_w} \le 1\right\}
= \sup \left\{\int_I \frac{|f|}{w} g \, wdm: \|g\|_{E_w} \le 1 \right\}\\
&=\sup \left\{\int_J \left(\frac{|f|}{w}\circ W^{-1}\right) \cdot  (g\circ W^{-1}) \, dm: \|g\circ W^{-1}\|_{E} \le 1\right\}\\
&=\sup \left\{\int_J \left(\frac{|f|}{w}\circ W^{-1}\right) \cdot  h \, dm: \|h\|_{E} \le 1 \right\}\\
&=\left\|\frac{|f|}{w}\circ W^{-1}\right\|_{E'} = \left\|\frac{f}{w}\right\|_{(E')_w}.
\end{align*}
This proves the first part. Using this  once for $E$, then for $E'$ we get
\begin{align*}
\|f\|_{(E_w)''} &=  \sup \left\{\int_I |f| h \, dm: \|h\|_{(E_w)'} \le 1,\right\}
= \sup \left\{\int_I |f| \frac{h}{w} \, wdm: \left\|\frac{h}{w}\right\|_{(E')_w} \le 1 \right\}\\
&=\sup \left\{\int_I (|f|w) g \, dm: \left\|g\right\|_{(E')_w} \le 1 \right\} = \left\| {|f|w\over w}\right\|_{(E'')_w} = \|f\|_{(E'')_w},
\end{align*}
which proves the second part.
\end{proof}

\begin{lemma}\label{lem:bidual}
The equality $(\Lambda_{E,w})'' = \Lambda_{E'',w}$ holds with equal norms.
\end{lemma}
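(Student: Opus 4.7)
The plan is to reduce the bidual of $\Lambda_{E,w}$ to that of the simpler space $E_w$, which has already been identified with $(E'')_w$ isometrically in Lemma~\ref{lem:associate}. The pivot between the two sides is the classical fact that the decreasing rearrangement is monotone with respect to pointwise a.e.\ convergence of increasing sequences, combined with the standard characterization of the second Köthe dual.

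The characterization I will invoke is the following well-known fact: for every Banach function space $F$ with full support, the embedding $F\hookrightarrow F''$ is isometric, $F''$ has the Fatou property, and $f\in F''$ if and only if there exist $f_n\in F$ with $0\le f_n\uparrow |f|$ a.e.\ and $\sup_n\|f_n\|_F<\infty$; moreover $\|f\|_{F''}=\sup_n\|f_n\|_F$ for any such sequence realizing the supremum (existence being guaranteed by truncation $f_n=(|f|\wedge n)\chi_{A_n}$ on an exhausting family $A_n\uparrow I$ of finite-measure sets). I will apply this both to $F=\Lambda_{E,w}$ and to $F=E_w$, using that in both cases the truncations $f_n$ have bounded support of finite measure, so that their rearrangements lie in $L_1\cap L_\infty$, which is included in every symmetric space with full support.

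For the inclusion $\Lambda_{E'',w}\subset(\Lambda_{E,w})''$, I start with $f\in\Lambda_{E'',w}$, so that $f^*\in (E'')_w=(E_w)''$ by Lemma~\ref{lem:associate}. Taking truncations $f_n=(|f|\wedge n)\chi_{A_n}$, I have $f_n\uparrow|f|$ a.e., hence $f_n^*\uparrow f^*$ a.e.\ by the basic monotone convergence property of rearrangements. Each $f_n^*$ lies in $L_1\cap L_\infty$, and its image $f_n^*\circ W^{-1}$ lies in $L_1\cap L_\infty(J)\subset E$, so $f_n\in\Lambda_{E,w}$. The Fatou property of $(E_w)''$ then yields $\|f_n^*\|_{E_w}=\|f_n^*\|_{(E_w)''}\uparrow\|f^*\|_{(E_w)''}=\|f\|_{\Lambda_{E'',w}}$, and the characterization above gives $f\in(\Lambda_{E,w})''$ with $\|f\|_{(\Lambda_{E,w})''}\le\|f\|_{\Lambda_{E'',w}}$.

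For the converse inclusion, I take $f\in(\Lambda_{E,w})''$ and select $f_n\in\Lambda_{E,w}$ with $0\le f_n\uparrow|f|$ a.e.\ and $\|f_n\|_{\Lambda_{E,w}}\uparrow\|f\|_{(\Lambda_{E,w})''}$. Again $f_n^*\uparrow f^*$ a.e.\ with $\|f_n^*\|_{E_w}=\|f_n\|_{\Lambda_{E,w}}$ uniformly bounded, so the Fatou property of $(E_w)''$ places $f^*$ in $(E_w)''=(E'')_w$ with $\|f^*\|_{(E'')_w}=\|f\|_{(\Lambda_{E,w})''}$. This gives $f\in\Lambda_{E'',w}$ with the matching norm. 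I do not foresee a serious obstacle: the only subtlety is the a.e.\ monotone convergence $f_n^*\uparrow f^*$, which is standard and follows from the monotone convergence of the distribution functions $d_{f_n}\uparrow d_{|f|}$.
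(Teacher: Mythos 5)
Your direction $(\Lambda_{E,w})''\subset\Lambda_{E'',w}$ is essentially the paper's argument and is fine. The gap is in the converse inclusion, where you invoke as a ``well-known fact'' that the embedding $F\hookrightarrow F''$ is isometric for every Banach function space $F$, and then use it in the form $\|f_n^*\|_{E_w}=\|f_n^*\|_{(E_w)''}$ for the truncations $f_n$ of $f$. In this paper's framework a Banach function space is \emph{not} assumed to have the Fatou property, and without some such assumption the embedding $F\subset F''$ is only norm-decreasing; it is isometric precisely when the norm is order semicontinuous ($0\le u_n\uparrow u\in F$ forces $\|u_n\|_F\uparrow\|u\|_F$). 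Full symmetry of $E$ does not rescue you: for instance $\|f\|_E=\|f\|_{\infty}+\lim_{t\to\infty}f^*(t)$ on $L_\infty(0,\infty)$ is a fully symmetric Banach function norm with $E''=L_\infty$ and $\|\chi_J\|_{E''}=1<2=\|\chi_J\|_E$. In your argument this is not a cosmetic loss: you need $\sup_n\|f_n\|_{\Lambda_{E,w}}=\sup_n\|f_n^*\|_{E_w}\le\|f^*\|_{(E_w)''}$ in order to place $f$ in $(\Lambda_{E,w})''$ with the right norm, but the only relation between $\|f_n^*\|_{E_w}$ and the hypothesis $f^*\in(E_w)''$ goes through the (possibly strict) inequality $\|f_n^*\|_{(E_w)''}\le\|f_n^*\|_{E_w}$, which points the wrong way; a priori the truncations could even have unbounded $E_w$-norms when $f^*\notin E_w$.

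The paper circumvents this by not truncating $f$ at all: from $\|f^*\|_{(E'')_w}=\|f^*\|_{(E_w)''}\le 1$ it extracts, via the cited characterization of the unit ball of a second K\"othe dual, a sequence $0\le g_n\uparrow f^*$ with $g_n\in E_w$ and $\|g_n\|_{E_w}\le 1$ \emph{already}, and then replaces each $g_n$ by the decreasing function $h_n=g_n^{*,w}\circ W$, which still increases to $f^*$, satisfies $h_n^{*,w}=g_n^{*,w}$, and hence lies in $\Lambda_{E,w}$ with $\|h_n\|_{\Lambda_{E,w}}=\|g_n\|_{E_w}\le 1$. If you add the hypothesis that $\|\cdot\|_E$ is order semicontinuous (in particular, if $E$ has the Fatou property) your argument goes through, but as written it proves less than the lemma claims.
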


\begin{proof}  

 We use the fact that if $F$ is a Banach function space, then $f\ge 0$ belongs to  $F''$ with $\|f\|_{F''}\le 1$ if and only if  there exists a sequence $0\le f_n\uparrow f$ a.e., with $f_n\in F$, $\|f_n\|_F\le 1$ for all $n\in \mathbb{N}$  \cite[Ch. 15, \S 66, Theorem 1]{Z}. 
 
 Assume first that $f\in (\Lambda_{E,w})''$ with norm $\le 1$, and let $0\le f_n\uparrow f$ a.e. with $\|f_n\|_{\Lambda_{E,w}}\le 1$. Then $f_n^*\uparrow f^*$ a.e., and $f_n^*\in E_w$, $\|f_n^*\|_{E_w}\le 1$. Hence $f^*\in (E_w)''$ with
 $\|f^*\|_{(E_w)''} \le 1$. However by Lemma \ref{lem:associate}, $(E_w)''=(E'')_w$  and so  $f\in \Lambda_{E'',w}$ with $\|f\|_{\Lambda_{E'',w}} \le 1$. 
 
  Conversely, let $f\in \Lambda_{E'',w}$ with $\|f\|_{\Lambda_{E'',w}}\le 1$,  then $f^*\in (E'')_w$ with $\|f^*\|_{(E'')_w}\le 1$. Since $(E'')_w=(E_w)''$, there exists $0\le g_n\uparrow f^*\in E_w$, with $\|g_n\|_{E_w}\le 1$. Then 
  \[
  g_n^{*,w}=(g_n\circ W^{-1})^*\uparrow f^*\circ W^{-1}.
  \]
   Setting $h_n=g_n^{*,w}\circ W$, we have  $h_n$ are non-negative and decreasing on $I$. Clearly  $h_n\uparrow f^*$ and $h_n^{*,w}=g_n^{*,w}$, so $\|h_n\|_{\Lambda_{E,w}}=\|h_n\|_{E_w}= \|h_n^{*,w}\|_E = \|g_n^{*,w}\|_E =\|g_n\|_{E_w}\le 1$. Therefore $f^*\in (\Lambda_{E,w})''$ with $\|f\|_{ (\Lambda_{E,w})''} = \|f^*\|_{ (\Lambda_{E,w})''}\le 1$, which shows the desired equality of spaces and norms.
\end{proof}

The next corollary states an important result on K\"othe duality of generalized Lorentz spaces $\Lambda_{E,w}$. As a corollary we obtain a new description of the K\"othe dual space of the Orlicz-Lorentz space (see section \ref{sec:OL} for details).

\begin{corollary}\label{cor:dual-lambda} Let $w$ be a decreasing positive  weight on $I$ and $W<\infty$.  We have  $(\Lambda_{E,w})'=Q_{E',w}$ with equal norms.
\end{corollary}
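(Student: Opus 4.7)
My plan is to obtain $(\Lambda_{E,w})' = Q_{E',w}$ by chaining the duality identities already established earlier in the section, rather than arguing directly. The key observation is that even without any Fatou hypothesis on $E$ itself, the space $E'$ automatically has the Fatou property (being a K\"othe dual of a Banach function space), so Proposition \ref{Fatou for Q} is available for $Q_{E',w}$.

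First I would apply Theorem \ref{class Q}(iv) with the fully symmetric space $E'$ in place of $E$, which gives
\[
(Q_{E',w})' = \Lambda_{(E')',w} = \Lambda_{E'',w}
\]
isometrically. Taking K\"othe duals of both sides and using Proposition \ref{Fatou for Q} (applied to $E'$) to identify the left-hand side with $Q_{E',w}$ itself (since $Q_{E',w}$ has the Fatou property and thus equals its second K\"othe dual), I obtain
\[
Q_{E',w} \;=\; (Q_{E',w})'' \;=\; (\Lambda_{E'',w})'
\]
with equal norms.

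Next I invoke Lemma \ref{lem:bidual}, which states $(\Lambda_{E,w})'' = \Lambda_{E'',w}$ isometrically, to rewrite the right-hand side as
\[
(\Lambda_{E'',w})' \;=\; \bigl((\Lambda_{E,w})''\bigr)' \;=\; (\Lambda_{E,w})'''.
\]
Finally, for any Banach function space $F$, the K\"othe dual $F'$ has the Fatou property and hence satisfies $F' = (F')'' = F'''$ isometrically; applied to $F=\Lambda_{E,w}$ this yields $(\Lambda_{E,w})''' = (\Lambda_{E,w})'$. Concatenating the three isometric identities delivers $(\Lambda_{E,w})' = Q_{E',w}$ with equal norms.

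There is no real obstacle in this step: all the analytic content — namely that $Q_{E,w}$ is the smallest fully symmetric Banach function space containing $M_{E,w}$ with K\"othe predual $\Lambda_{E',w}$, the inheritance of the Fatou property from $E$ to $Q_{E,w}$, and the bidual formula for Lorentz spaces — has already been done. The proof of the corollary is a purely formal manipulation of K\"othe duals, the only subtlety being to exploit the \emph{automatic} Fatou property of $E'$ so that no hypothesis is required on $E$.
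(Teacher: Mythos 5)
Your proof is correct and follows essentially the same route as the paper: both arguments combine the identity $(\Lambda_{E,w})'=((\Lambda_{E,w})'')'$, Lemma \ref{lem:bidual}, and Proposition \ref{Fatou for Q} applied to $E'$ (using its automatic Fatou property). The only cosmetic difference is that you re-derive the identity $(\Lambda_{E'',w})'=Q_{E',w}$ from Theorem \ref{class Q}(iv) together with the Fatou property of $Q_{E',w}$, which is exactly how the paper proves the second assertion of Proposition \ref{Fatou for Q} in the first place.
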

\begin{proof}
By general theory of Banach function lattices \cite[Theorem 2, p.457]{Z}, $\Lambda_{E,w}$ and its K\"othe bidual $(\Lambda_{E,w})''$ have the same K\"othe duals. The result follows then by applying  Proposition \ref{Fatou for Q} to $E'$ since   $E'$ has the Fatou property, and then Lemma \ref{lem:bidual}.
\end{proof}

As an immediate corollary of Theorem \ref{th:Q-by-LF} and Corollary \ref{cor:dual-lambda} we obtain a generalization of the H\"older-Halperin inequality \cite[Theorem 4.2]{Ha53}.

\begin{corollary}\label{cor:HH-ineq}
Let $w$ be a decreasing positive weight on $I$ and $W<\infty$. For $f\in L^0$ we have
\[\sup\left\{\int_I |fg|:  g\in \Lambda_{E,w}, \|g\|_{\Lambda_{E,w}}\le 1\right\}=
\begin{cases}
\|f^0\|_{M_{E',w}}& \hbox{ if $f^0\in M_{E',w}$},\\
\infty &\hbox{ otherwise}.
\end{cases}
\]
Consequently $\|f\|_{(\Lambda_{E,w})'} = \|f^0\|_{M_{E',w}} = \|f\|_{Q_{E',w}}$ for every $f\in (\Lambda_{E,w})'$.
\end{corollary}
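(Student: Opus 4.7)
The plan is to combine the two previously obtained results, namely Corollary \ref{cor:dual-lambda} (which identifies $(\Lambda_{E,w})'$ with $Q_{E',w}$ isometrically) and Theorem \ref{th:Q-by-LF} (which computes the $Q_{E',w}$-norm by means of Halperin's level function). The supremum on the left-hand side is nothing else than $\|f\|_{(\Lambda_{E,w})'}$, with the usual convention that it equals $+\infty$ when $f\notin (\Lambda_{E,w})'$. Hence the task reduces to showing that, for $f\in L^0(I)$,
\[
\|f\|_{(\Lambda_{E,w})'} = \|f^0\|_{M_{E',w}}
\]
whenever $f^0\in M_{E',w}$, and that otherwise both quantities are infinite.

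First, I would settle the finite case. If $f^0\in M_{E',w}$, then in particular $f^0\in L_1+M_W$ (since $M_{E',w}\subset Q_{E',w}\subset L_1+M_W$ by Theorem \ref{class Q}(i)), and Lemma \ref{stability} gives $f\in L_1+M_W$. Then Theorem \ref{th:Q-by-LF} applies and yields $f\in Q_{E',w}$ with $\|f\|_{Q_{E',w}}=\|f^0\|_{M_{E',w}}$. Invoking Corollary \ref{cor:dual-lambda} converts this identity into $\|f\|_{(\Lambda_{E,w})'}=\|f^0\|_{M_{E',w}}$, which is the desired formula.

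Next, I would handle the divergent case. Suppose the supremum is finite; equivalently $f\in (\Lambda_{E,w})'=Q_{E',w}$ (Corollary \ref{cor:dual-lambda}). Then $f\in L_1+M_W$ (Theorem \ref{class Q}(i)), so its level function $f^0$ is nondegenerate, and Theorem \ref{th:Q-by-LF} forces $f^0\in M_{E',w}$ with the same norm value. Contrapositively, if $f^0\notin M_{E',w}$, then the supremum must be $+\infty$, completing the statement of the corollary. The last assertion, $\|f\|_{(\Lambda_{E,w})'}=\|f^0\|_{M_{E',w}}=\|f\|_{Q_{E',w}}$ for $f\in (\Lambda_{E,w})'$, is just a restatement of the two chained identities.

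I expect no real obstacle here: the whole machinery has already been assembled in the earlier sections (in particular, the delicate points---submajorization by level functions, stability of $L_1+M_W$ under the map $f\mapsto f^0$, and the $Q$--$M$ norm identification---have been proved in Theorem \ref{th:Q-by-LF}, Lemma \ref{stability} and Corollary \ref{cor:dual-lambda}). The only small care needed is to handle consistently the case where $f^0$ is degenerate, which is excluded precisely by the membership $f^0\in M_{E',w}$ that indexes the non-trivial branch of the formula.
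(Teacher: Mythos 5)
Your proposal is correct and follows essentially the same route as the paper: the supremum is recognized as $\|f\|_{(\Lambda_{E,w})'}$, and the result is obtained by chaining Corollary \ref{cor:dual-lambda} with Theorem \ref{th:Q-by-LF}, checking in each direction that the hypothesis $f\in L_1+M_W$ of the latter is available. Your slightly more explicit handling of the degenerate case via Lemma \ref{stability} and Theorem \ref{class Q}(i) fills in details the paper leaves implicit, but the argument is the same.
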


\begin{proof}
 The left member is finite if and only if $f\in (\Lambda_{E,w})'=Q_{E',w}$. In this case  $\|f\|_{(\Lambda_{E,w})'}=\|f\|_{Q_{E',w}} = \|f^0\|_{M_{E',w}}$. Conversely if the right side is finite then $f^0$ is non-degenerate and belongs to $M_{E',w}$.  Thus $f\prec f^0$ implies that $f\in Q_{E',w}= (\Lambda_{E,w})'$.

\end{proof}

\section{Spaces $P_{E,w}$}\label{sec:P}

We assume in this chapter that $W<\infty$ on $I$. 

\begin{definition}\label{def:Pnorm}
We denote by $P_{E,w}$ the union of the classes  $M_{E,v}$, where $v$ is a positive 
decreasing weight submajorized by $w$ on $I$. The symbol $v\downarrow$ means that $v$ is decreasing. This set is equipped with the gauge
\[ 
\|f\|_{P_{E,w}}=\inf\left\{\|f\|_{M_{E,v}}: v>0, v\downarrow\,, v\prec w\right\}.
 \]
\end{definition}

Our goal is to show that $\|\cdot\|_{P_{E,w}}$ is a symmetric norm, and in fact  $P_{E,w}=Q_{E,w}$ as sets and $\|\cdot\|_{P_{E,w}} = \|\cdot\|_{Q_{E,w}}$.
From the next lemma it follows that the gauge on $P_{E,w}$ is faithful.
\begin{lemma}\label{lem:M-P-M}
We have $M_{E,w}\subset P_{E,w}\subset M_{E,\tilde w}$, where $\tilde w(t):={W(t)\over t}$, $t\in I$, and these inclusions are gauge-decreasing.
\end{lemma}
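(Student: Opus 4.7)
The first inclusion $M_{E,w}\subset P_{E,w}$ with gauge-decreasing inclusion is immediate: the weight $v=w$ is admissible in the infimum defining $\|\cdot\|_{P_{E,w}}$ (it is positive, decreasing, and trivially submajorized by itself), so $\|f\|_{P_{E,w}}\le\|f\|_{M_{E,w}}$ for every $f\in M_{E,w}$.

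The second inclusion rests on the pointwise estimate $v\le\tilde w$, valid for every admissible weight $v$ in the definition of $P_{E,w}$. Indeed, since $v$ is decreasing, $v(t)\le V(t)/t$; and $v\prec w$ means $V(t)\le W(t)$ for all $t\in I$, so $V(t)/t\le W(t)/t=\tilde w(t)$, giving $v\le\tilde w$ pointwise. (The same averaging argument also shows that $\tilde w$ itself is decreasing, so $\tilde w$ is a legitimate positive decreasing weight on $I$ and $M_{E,\tilde w}$ is defined by the construction of Section~\ref{subsec:def-class-M}.)

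From $\lambda v\le\lambda\tilde w$ we get $(f^*-\lambda\tilde w)_+\le(f^*-\lambda v)_+$, hence $\int_I(f^*-\lambda\tilde w)_+\,dm\le\int_I(f^*-\lambda v)_+\,dm$ for every $\lambda>0$. Applying formula (\ref{eq:1}) in the form
\[
\int_0^x\left(\frac{f^*}{u}\right)^{*,u}dm=\inf_{\lambda>0}\left[\int_I(f^*-\lambda u)_+\,dm+\lambda x\right],
\]
once with $u=\tilde w$ and once with $u=v$, and taking the infimum over $\lambda$, I obtain $(f^*/\tilde w)^{*,\tilde w}\prec(f^*/v)^{*,v}$. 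Full symmetry of $E$ then gives
\[
\|f\|_{M_{E,\tilde w}}=\left\|\left(\frac{f^*}{\tilde w}\right)^{*,\tilde w}\right\|_E\le\left\|\left(\frac{f^*}{v}\right)^{*,v}\right\|_E=\|f\|_{M_{E,v}},
\]
and taking the infimum over admissible $v$ yields $\|f\|_{M_{E,\tilde w}}\le\|f\|_{P_{E,w}}$, delivering both the inclusion $P_{E,w}\subset M_{E,\tilde w}$ and the gauge-decreasing property.

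The heart of the argument is the pointwise estimate $v\le\tilde w$, which couples cleanly the monotonicity of $v$ with the submajorization $v\prec w$; after that, formula (\ref{eq:1}) converts the pointwise estimate into the submajorization of rearrangements with respect to two different measures, and full symmetry of $E$ closes the loop. No obstacle of substance is anticipated.
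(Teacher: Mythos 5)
Your proof is correct and follows essentially the same route as the paper: both arguments hinge on the pointwise estimate $tv(t)\le V(t)\le W(t)$, hence $v\le\tilde w$, for every admissible $v$. The only difference is that where the paper then simply invokes Lemma \ref{compar-norms} (with constant $C=1$), you re-derive that special case directly from formula (\ref{eq:1}) via the monotonicity of $\lambda\mapsto(f^*-\lambda u)_+$ in $u$, which is a perfectly valid self-contained substitute.
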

\begin{proof}
The first inclusion and the corresponding gauge inequality are clear. Conversely for each $v\prec w$ we have $tv(t)\le V(t)\le W(t)$, where $V(t) = \int_0^t v\,dm$, $t\in I$. Hence $v(t)\le \tilde w(t)$, $t\in I$,  and in view of Lemma \ref{compar-norms}, $M_{E,v}\subset M_{E,\tilde w}$, with $\|f\|_{M_{E,\tilde w}}\le \|f\|_{M_{E,v}}$. Taking the infimum with respect to $v\prec w$ we obtain $P_{E,w}\subset M_{E,\tilde w}$ with  $\|f\|_{M_{E,\tilde w}}\le \|f\|_{P_{E,w}}$ for $f\in P_{E,w}$.
\end{proof}

\begin{lemma}\label{lem:monot}
If $v$ is a positive decreasing weight such that $v\prec w$ and $h\in E_w$ is decreasing then $h\in E_v$ and $\|h\|_{E_v}\le \|h\|_{E_w}$.
\end{lemma}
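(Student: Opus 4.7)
The plan is to reduce the claim to a pointwise comparison of functions through the isometric identification $\|g\|_{E_v}=\|g\circ V^{-1}\|_E$ valid for decreasing $g$, which is the analog of Proposition~\ref{prop:S=W}(ii) applied to the weight $v$ in place of $w$. Indeed, since $v$ is positive and $v\prec w$ with $w$ locally integrable, $v$ is locally integrable as well and $V(t)=\int_0^t v\,dm$ defines a strictly increasing continuous bijection from $I$ onto its range. If $g$ is decreasing on $I$, then $g\circ V^{-1}$ is decreasing on that range, so $(g\circ V^{-1})^*=g\circ V^{-1}=g^{*,v}$ and hence $\|g\|_{E_v}=\|g\circ V^{-1}\|_E$. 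The same identity holds for $w$ in place of $v$.

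Applied to $h$, this reduces the lemma to the estimate $\|h\circ V^{-1}\|_E\le\|h\circ W^{-1}\|_E$, which I would obtain from the pointwise inequality
\[
h\circ V^{-1}(x)\ \le\ h\circ W^{-1}(x)
\]
on the common domain. This inequality rests on two elementary observations. First, since $v$ and $w$ are both decreasing they coincide with their rearrangements, so the submajorization $v\prec w$ is just the statement $V(u)\le W(u)$ for every $u\in I$. Second, as $V$ and $W$ are both strictly increasing, this reverses to $W^{-1}(x)\le V^{-1}(x)$ for every $x$ in the common domain; since $h$ is decreasing this gives $h(V^{-1}(x))\le h(W^{-1}(x))$.

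With the pointwise inequality in hand, the conclusion is immediate from $E$ being a Banach function space with a monotone norm: $h\circ W^{-1}\in E$ because $\|h\|_{E_w}<\infty$, the ideal property places $h\circ V^{-1}$ in $E$, and monotonicity of the norm yields $\|h\circ V^{-1}\|_E\le\|h\circ W^{-1}\|_E$, i.e.\ $\|h\|_{E_v}\le\|h\|_{E_w}$. The only point requiring some care is the case where $\int_I v<\int_I w$, in which the range of $V$ is a proper subinterval of $J$; one then reads $h\circ V^{-1}$ as extended by zero on the complementary subinterval, which preserves both the pointwise inequality (as $h\circ W^{-1}\ge 0$) and the norm estimate, so the argument goes through unchanged.
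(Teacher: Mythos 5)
Your argument is correct, but it takes a genuinely different route from the paper's. The paper establishes only the submajorization $h^{*,v}\prec h^{*,w}$: it applies Hardy's lemma to the decreasing function $(h-\lambda)_+$ together with $\int_0^x v\,dm\le\int_0^x w\,dm$, and converts the resulting family of integral inequalities into the submajorization via the identity (\ref{eq:1}) expressing $\int_0^x h^{*,\mu}\,dm$ as an infimum over $\lambda$; it must then invoke the \emph{full} symmetry of $E$ to conclude. You instead prove the stronger pointwise inequality $h^{*,v}=h\circ V^{-1}\le h\circ W^{-1}=h^{*,w}$, obtained from $W^{-1}\le V^{-1}$ (an immediate consequence of $V\le W$ and strict monotonicity of both primitives) combined with the monotonicity of $h$; as a result you need only the ideal property and the monotone norm of $E$, not its full symmetry. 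Your version is more elementary and yields a sharper intermediate statement, while the paper's version reuses the same $K$-functional computation that appears elsewhere (e.g.\ in Theorem \ref{H-L} and Lemma \ref{M level-closed}), where a pointwise comparison is not available. Your treatment of the case $\int_I v<\int_I w$ --- extending $h\circ V^{-1}$ by zero on $(V(a),b)$, consistent with the identification of $E_v$ with the band $\chi_{J_v}E$ used in Proposition \ref{prop:order-continuity} --- is also correct; the only implicit assumption shared with the paper is that $h\ge 0$, which is the case in the lemma's sole application (to $h=f^*$).
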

\begin{proof}
 By  Hardy's Lemma \cite[Proposition 3.6, p. 56]{BS} since $(h-\lambda)_+$ is decreasing and $v\prec w$, for every $\lambda>0$ we have
\[
\int_I (h-\lambda)_+ \,vdm\le \int_I (h-\lambda)_+ \,wdm.
\]
 Then in view of identity (\ref{eq:1}) for any $x\in J$, 
\begin{align*}
\int_0^x h^{*,v} \,dm &=\inf_{\lambda>0}\left[\int_I (h-\lambda)_+\,vdm+\lambda x\right]\\
&\le \inf_{\lambda>0}\left[\int_I (h-\lambda)_+\,wdm+\lambda x\right]=\int_0^x h^{*,w} \,dm,
\end{align*}
and so $h^{*,v}\prec h^{*,w}$. Thus since $E$ is fully symmetric and $h^{*,w} \in E$ we have that $h^{*,v}\in E$ and so $h\in E_v$. Moreover $\|h\|_{E_v} = \|h^{*,v}\|_E \le \|h^{*,w}\|_E = \|h\|_{E_w}$. 
\end{proof}
\begin{proposition}\label{prop:equal}
We have $(P_{E,w})'=\Lambda_{E',w}$ with equal norms.
\end{proposition}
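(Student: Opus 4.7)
The plan is to prove the equality $(P_{E,w})' = \Lambda_{E',w}$ by establishing two reverse inclusions, each accompanied by the corresponding norm inequality. The main tools at hand are the isometric duality $(M_{E,w})' = \Lambda_{E',w}$ from Theorem \ref{dual-M}, Lemma \ref{lem:monot} which controls how the $(\cdot)_v$ norm of a decreasing function changes when $v \prec w$, the gauge-decreasing inclusion $M_{E,w} \subset P_{E,w}$ of Lemma \ref{lem:M-P-M}, and the order-isometry $E_v \cong E$ via composition with $V$ (Remark \ref{rem:isom-E-E_v}), which transports K\"othe duality to the weighted setting.

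For the inclusion $\Lambda_{E',w} \hookrightarrow (P_{E,w})'$ with norm-decreasing property, I fix $f \in \Lambda_{E',w}$, $g \in P_{E,w}$ and $\varepsilon>0$, and choose a positive decreasing weight $v$ with $v\prec w$ such that $g \in M_{E,v}$ and $\|g\|_{M_{E,v}}\le(1+\varepsilon)\|g\|_{P_{E,w}}$. Applying the Hardy–Littlewood inequality and then the K\"othe–H\"older duality on the measure space $(I,\nu)$, $\nu=v\,dm$ (which by Remark \ref{rem:isom-E-E_v} is transported isometrically from the K\"othe duality between $E$ and $E'$ on $(J,m)$ through the map $h\mapsto h\circ V$), I write
\[
\int_I |fg|\,dm \le \int_I f^*g^*\,dm = \int_I f^*\,\frac{g^*}{v}\,v\,dm \le \|f^*\|_{(E')_v}\,\left\|\frac{g^*}{v}\right\|_{E_v} = \|f^*\|_{(E')_v}\,\|g\|_{M_{E,v}}.
\]
Since $f^*$ is decreasing and $v\prec w$, Lemma \ref{lem:monot} applied with $E$ replaced by the fully symmetric space $E'$ yields $\|f^*\|_{(E')_v}\le \|f^*\|_{(E')_w}=\|f\|_{\Lambda_{E',w}}$. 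Taking first the infimum over admissible $v$ and then letting $\varepsilon\to 0$ produces $\int_I |fg|\,dm \le \|f\|_{\Lambda_{E',w}}\|g\|_{P_{E,w}}$, hence $\|f\|_{(P_{E,w})'}\le \|f\|_{\Lambda_{E',w}}$.

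The reverse inclusion is essentially formal. From Lemma \ref{lem:M-P-M} we have $M_{E,w}\subset P_{E,w}$ with $\|\cdot\|_{P_{E,w}}\le \|\cdot\|_{M_{E,w}}$, so the gauge unit ball of $M_{E,w}$ sits inside that of $P_{E,w}$, giving $(P_{E,w})'\subset (M_{E,w})'$ with $\|f\|_{(M_{E,w})'}\le \|f\|_{(P_{E,w})'}$. Invoking Theorem \ref{dual-M}, which identifies $(M_{E,w})'=\Lambda_{E',w}$ isometrically, produces the inclusion $(P_{E,w})'\subset \Lambda_{E',w}$ with $\|f\|_{\Lambda_{E',w}}\le \|f\|_{(P_{E,w})'}$. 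Combining the two inclusions delivers the claim with equal norms.

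The main obstacle is the correct handling of the K\"othe duality in step one: one must be careful that the pairing $\int_I F G \,v\,dm$ is indeed controlled by $\|F\|_{(E')_v}\|G\|_{E_v}$ as \emph{defined} in the paper, which relies on pushing the whole duality through the order isometry $h\mapsto h\circ V$ associated with $v$. Once this is taken as a consequence of Remark \ref{rem:isom-E-E_v} (together with Proposition \ref{prop:S=W}), the only nontrivial input is Lemma \ref{lem:monot}, whose applicability to decreasing rearrangements $f^*$ fits perfectly into the argument.
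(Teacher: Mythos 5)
Your proof is correct and follows essentially the same route as the paper: the formal inclusion $(P_{E,w})'\subset(M_{E,w})'=\Lambda_{E',w}$ via Lemma \ref{lem:M-P-M} and Theorem \ref{dual-M}, and the converse via choosing a near-optimal $v\prec w$ and applying Lemma \ref{lem:monot} to $E'$. The only cosmetic difference is that you write out the H\"older estimate $\int_I|fg|\,dm\le\|f\|_{\Lambda_{E',v}}\|g\|_{M_{E,v}}$ explicitly, where the paper simply invokes Theorem \ref{dual-M} with $v$ in place of $w$.
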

\begin{proof}
Since $M_{E,w}\subset P_{E,w}$ with gauge decreasing inclusion, we have $(P_{E,w})'\subset (M_{E,w})'=\Lambda_{E',w}$ by Theorem \ref{dual-M}, and the inclusion is  norm decreasing.

Conversely if $g\in P_{E,w}$ and $\eps>0$, there is $v\prec w$ such that 
\[
g\in M_{E,v} \ \ \ \text{and} \ \ \  \|g\|_{M_{E,v}}\le (1+\eps) \|g\|_{P_{E,w}}.
\]
 Let $f\in \Lambda_{E',w}$.  Then $f^*\in (E')_w$ and by Lemma \ref{lem:monot}, $f^* \in (E')_v$, hence $f\in \Lambda_{E',v}$ with $\|f\|_{\Lambda_{E',v}}= \|f^*\|_{(E')_v} \le \|f^*\|_{(E')_w} = \|f\|_{\Lambda_{E',w}}$.
 Then by Theorem \ref{dual-M},  $fg\in L_1$ with 
 \[ 
 \int_I |fg|\,dm\le \|f\|_{\Lambda_{E',v}}\|g\|_{M_{E,v}}\le (1+\eps)\|f\|_{\Lambda_{E',w}}\|g\|_{P_{E,w}}.
 \]
  Thus $f\in (P_{E,w})'$ with $\|f\|_{(P_{E,w})'}\le (1+\eps)\|f\|_{\Lambda_{E',w}}$. Since $\eps > 0$ is arbitrary we obtain that the inclusion $\Lambda_{E',w}\subset (P_{E,w})'$ is norm decreasing.
\end{proof}

 Consider the inverse level function $w^f$  of $w$ with respect to a non-negative decreasing and locally integrable function $f$, that was introduced in \cite[Remark 4.4]{KLR}.  It is defined as 
 \begin{equation*}
w^f\left( t\right) =\left\{
\begin{array}{cc}
f(t)\over R(\alpha,\beta) & \hbox{if $t$ belongs to some maximal level interval $(\alpha,\beta)$},\\
w\left( t\right) & \text{otherwise}. \hfill
\end{array}%
\right.
\end{equation*}%
Comparing this with  Definition \ref{def:level} of $f^0$ we have that $f^0(t) = R(\alpha,\beta) w(t)$ if $t\in (\alpha,\beta)$, and thus 
\begin{align}\label{eq:inverselevel}
w^f(t)=
\begin{cases}\ds
\frac {f(t)}{f^0(t)}w(t) & \hbox{ if } t\in (\alpha,\beta),\\
w(t) & \hbox{ otherwise. }
\end{cases}
\end{align}
By definition of the level function we can show directly that $f(t) > 0$ for $t\in (\alpha,\beta)$. Hence  $f^0 > 0$ on $(\alpha, \beta)$ and since $w$ is  positive on $I$, so $w^f$ is also positive on $I$. Moreover $w^f$ is decreasing and  $w^f \prec w$ \cite[Remark 4.4]{KLR}. For arbitrary $f\in L_1 + M_W$ we define $w^f = w^{f^*}$. Now we are ready to compare the classes $Q_{E,w}$ with $P_{E,w}$. 

\begin{proposition}\label{prop:incl}
$Q_{E,w}\subset P_{E,w}$ and the inclusion is gauge decreasing.
\end{proposition}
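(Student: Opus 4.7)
The plan is to exhibit the inverse level function $w^f$ as a witness weight for the infimum defining $\|f\|_{P_{E,w}}$. Given $f\in Q_{E,w}$, I will aim to show that $f\in M_{E,w^f}$ with $\|f\|_{M_{E,w^f}}\le \|f\|_{Q_{E,w}}$; since $w^f$ is a positive, decreasing weight on $I$ satisfying $w^f\prec w$ (as recalled just before the statement), this will immediately give $f\in P_{E,w}$ and the desired gauge inequality $\|f\|_{P_{E,w}}\le \|f\|_{Q_{E,w}}$.

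First I would invoke Theorem \ref{th:Q-by-LF} to see that $f^0\in M_{E,w}$ with $\|f\|_{Q_{E,w}}=\|f^0\|_{M_{E,w}}=\|f^0/w\|_{E_w}$; in particular $f^0$ is finite a.e., so every maximal level interval of $f^*$ has finite ratio $R(\alpha,\beta)$, and $w^f$ is well-defined and strictly positive on $I$ by the observation recalled in the excerpt. The central computational step will be to verify the pointwise identity
\[
\frac{f^*}{w^f}=\frac{f^0}{w}\quad\text{a.e.\ on }I,
\]
which follows directly from \eqref{eq:inverselevel}: on any maximal level interval $(\alpha,\beta)$ both sides equal the constant $R(\alpha,\beta)$, while outside every such interval one has $w^f=w$ and $f^0=f^*$ by definition.

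Setting $h:=f^0/w$, which is decreasing by Fact \ref{LF-prop}(i) and lies in $E_w$, I would then apply Lemma \ref{lem:monot} with the weight $v=w^f\prec w$ to obtain $\|h\|_{E_{w^f}}\le \|h\|_{E_w}$. Combining this with the identity above produces the chain
\[
\|f\|_{M_{E,w^f}}=\Bigl\|\frac{f^*}{w^f}\Bigr\|_{E_{w^f}}=\|h\|_{E_{w^f}}\le \|h\|_{E_w}=\|f^0\|_{M_{E,w}}=\|f\|_{Q_{E,w}},
\]
which completes the argument. The main conceptual step is recognizing that the inverse level function $w^f$ is engineered precisely so that $f^*/w^f$ coincides with the level-function ratio $f^0/w$; once this identity is in hand, Lemma \ref{lem:monot} together with the level-function characterization of the $Q_{E,w}$-norm do all the remaining work, and no delicate submajorization argument is needed here.
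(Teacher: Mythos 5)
Your proof is correct and follows essentially the same route as the paper's: both arguments rest on the identity $f^*/w^f=f^0/w$, the monotonicity of $f^0/w$ from Fact \ref{LF-prop}(i), Lemma \ref{lem:monot} applied with $v=w^f\prec w$, and the level-function characterization $\|f\|_{Q_{E,w}}=\|f^0\|_{M_{E,w}}$ from Theorem \ref{th:Q-by-LF}. The only difference is that you spell out the verification of the pointwise identity and the finiteness of $f^0$, which the paper leaves implicit.
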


\begin{proof}
By Theorem \ref{th:Q-by-LF} we have $\|f\|_{Q_{E,w}}=\|f^0\|_{M_{E,w}}$. 
Clearly $\ds \frac {f^*}{w^f}=\frac {f^0}w$. By Fact \ref{LF-prop}(i) the latter function  is decreasing. Hence by Lemma \ref{lem:monot} and Theorem \ref{th:Q-by-LF} we get
\[
\|f\|_{M_{E,w^f}}= \left\|\frac {f^*}{w^f}\right\|_{E_{w^f}}\le\left\|\frac {f^*}{w^f}\right\|_{E_w}= \left\|\frac {f^0}{w}\right\|_{E_w}=\|f^0\|_{M_{E,w}}=\|f\|_{Q_{E,w}},
\]
and a fortiori $\|f\|_{P_{E,w}}\le \|f\|_{Q_{E,w}}$.
\end{proof}

\begin{remark}
By Lemma \ref{lem:M-P-M} and Proposition \ref{prop:incl} we have $M_{E,w}\subset Q_{E,w}\subset P_{E,w}\subset M_{E,\tilde w}$, with gauge-decreasing inclusions. In particular if $w$ is regular the four classes coincide as sets, and the gauges are equivalent, and we recover that in this case the class $M_{E,w}$ is normable (Proposition \ref{prop:normability}).
\end{remark}
\begin{corollary}\label{cor:77}
If $E$ has the Fatou property then $P_{E,w}=Q_{E,w}$ isometrically, that is $\|f\|_{P_{E,w}}= \|f\|_{Q_{E,w}}$ for every $f\in P_{E,w}$.  Consequently the class $P_{E,w}$ is a fully symmetric Banach function space having all properties discussed in Section \ref{sec:Q}.
\end{corollary}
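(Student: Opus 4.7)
The plan is to establish the two opposite inequalities between the gauges, using the already-developed K\"othe duality. One direction, $\|f\|_{P_{E,w}}\le \|f\|_{Q_{E,w}}$ for $f\in Q_{E,w}$, is immediate from Proposition \ref{prop:incl}, so the real task is to prove $Q_{E,w}\supset P_{E,w}$ with $\|f\|_{Q_{E,w}}\le \|f\|_{P_{E,w}}$. The crucial observation is that when $E$ has the Fatou property, Proposition \ref{Fatou for Q} identifies $Q_{E,w}$ isometrically with the second dual $(\Lambda_{E',w})'$, so it suffices to bound the dual norm of $f\in P_{E,w}$ against elements of $\Lambda_{E',w}$.

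More precisely, fix $f\in P_{E,w}$ and $\eps>0$. By the definition of $P_{E,w}$, pick a positive decreasing weight $v\prec w$ such that $f\in M_{E,v}$ and
\[
\|f\|_{M_{E,v}}\le (1+\eps)\|f\|_{P_{E,w}}.
\]
For any $g\in \Lambda_{E',w}$, the function $g^*$ is decreasing and lies in $(E')_w$, so Lemma \ref{lem:monot} (applied with $E$ replaced by $E'$) gives $g^*\in (E')_v$ with $\|g^*\|_{(E')_v}\le \|g^*\|_{(E')_w}$, that is $g\in \Lambda_{E',v}$ and $\|g\|_{\Lambda_{E',v}}\le \|g\|_{\Lambda_{E',w}}$. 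The duality Theorem \ref{dual-M}, applied to the weight $v$, then yields
\[
\int_I |fg|\,dm \le \|f\|_{M_{E,v}}\|g\|_{\Lambda_{E',v}}\le (1+\eps)\|f\|_{P_{E,w}}\|g\|_{\Lambda_{E',w}}.
\]
Letting $\eps\to 0$ shows $f\in (\Lambda_{E',w})'$ with $\|f\|_{(\Lambda_{E',w})'}\le \|f\|_{P_{E,w}}$.

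Now invoke Proposition \ref{Fatou for Q}: the Fatou property of $E$ guarantees $(\Lambda_{E',w})'=Q_{E,w}$ with equal norms, so $f\in Q_{E,w}$ and $\|f\|_{Q_{E,w}}\le \|f\|_{P_{E,w}}$. Combining this with the inequality from Proposition \ref{prop:incl} yields $P_{E,w}=Q_{E,w}$ with identical gauges. The consequence that $P_{E,w}$ inherits all the structural properties of $Q_{E,w}$ from Section \ref{sec:Q} (in particular the fully symmetric Banach function space structure and the Fatou property) is then automatic.

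I do not expect a serious obstacle: every ingredient is already in place, and the only subtlety is making sure that when passing from $w$ to a smaller weight $v$ the dual space $\Lambda_{E',\cdot}$ behaves monotonically in the right direction, which is precisely the content of Lemma \ref{lem:monot}. The Fatou-property hypothesis is used exactly once, to turn the $(M_{E,w})'\hbox{-}(Q_{E,w})$ duality into a predual identification $(\Lambda_{E',w})'=Q_{E,w}$.
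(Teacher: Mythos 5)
Your proposal is correct and follows essentially the same route as the paper: both directions hinge on Proposition \ref{prop:incl} for $\|\cdot\|_{P_{E,w}}\le\|\cdot\|_{Q_{E,w}}$ and on the identification $(\Lambda_{E',w})'=Q_{E,w}$ from Proposition \ref{Fatou for Q} for the converse. The only difference is presentational: the paper cites Proposition \ref{prop:equal} together with the norm-decreasing embedding $P_{E,w}\subset (P_{E,w})''$, whereas you re-derive the relevant half of Proposition \ref{prop:equal} inline (via Lemma \ref{lem:monot} and the H\"older inequality from Theorem \ref{dual-M}), which amounts to the same chain of ideas.
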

\begin{proof}
By the Fatou property $E''=E$, and Propositions \ref{prop:equal}, \ref{prop:incl} and Theorem \ref{cor:dual-lambda} we have $Q_{E,w}\subset P_{E,w}\subset (P_{E,w})'' = (\Lambda_{E',w})'=Q_{E'',w}= Q_{E,w}$, and these inclusions are gauge decreasing. Hence $P_{E,w} = Q_{E,w}$ with equality of norms.
\end{proof}
Since $E'$  has the Fatou property we have $P_{E',w} = Q_{E',w}$ by Corollary \ref{cor:77}, and  $Q_{E',w} = (\Lambda_{E,w})'$ by Corollary \ref{cor:dual-lambda}, thus we get the following result which generalizes  \cite[Theorem 2.2]{KLR}, \cite[Corollary 4.12]{KR} from Orlicz-Lorentz to abstract Lorentz  spaces:

\begin{corollary}\label{dual-Pspace}
For any fully symmetric Banach function space $E$, we have  $(\Lambda_{E,w})'=P_{E',w}$ isometrically.
\end{corollary}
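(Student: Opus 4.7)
The proof plan is essentially a chain of two previously established identifications, so the main work has already been done and the argument reduces to a short chaining step.

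The plan is as follows. First, I would invoke the standard fact (recalled implicitly in the text) that the K\"othe dual $E'$ of any Banach function space automatically has the Fatou property. This is the key hypothesis that unlocks Corollary \ref{cor:77}. Applying that corollary with $E$ replaced by $E'$ yields the isometric identification $P_{E',w} = Q_{E',w}$, with equality of the two gauges on the common underlying set.

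Second, I would apply Corollary \ref{cor:dual-lambda} directly, which states $(\Lambda_{E,w})' = Q_{E',w}$ with equal norms. (This requires $W < \infty$ on $I$, the standing hypothesis for the whole $P$/$Q$ theory; it is worth flagging this assumption explicitly, though it is inherited from the setup of Section \ref{sec:P}.) Chaining this identification with the preceding one yields
\[
(\Lambda_{E,w})' \;=\; Q_{E',w} \;=\; P_{E',w},
\]
all three with equal norms, which is exactly the claimed isometric equality.

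There is no real obstacle: all the heavy lifting, namely the level-function representation of the $Q$-norm (Theorem \ref{th:Q-by-LF}), the Fatou property for $Q_{E,w}$ (Proposition \ref{Fatou for Q}), the biduality computation for $\Lambda_{E,w}$ (Lemma \ref{lem:bidual}), and the comparison $Q_{E,w} \subset P_{E,w} \subset (P_{E,w})''$ that drives Corollary \ref{cor:77}, has already been carried out. The only thing to double-check when writing the final proof is that one is allowed to apply Corollary \ref{cor:77} to $E'$ rather than $E$, which is fine because that corollary only assumes the Fatou property on the symmetric space in question, and $E'$ always has it.
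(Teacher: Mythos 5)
Your proposal is correct and follows exactly the paper's own argument: the text immediately preceding Corollary \ref{dual-Pspace} derives it by noting that $E'$ has the Fatou property, applying Corollary \ref{cor:77} to $E'$ to get $P_{E',w}=Q_{E',w}$, and combining this with $(\Lambda_{E,w})'=Q_{E',w}$ from Corollary \ref{cor:dual-lambda}. Your remark about the standing hypothesis $W<\infty$ is a fair point of care, but there is nothing to add beyond what the paper does.
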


Now we investigate the order continuity of spaces $M_{E,w}$ and $P_{E,w}$. 
\begin{proposition}\label{prop:order-continuity}
 If $E$ is an order continuous symmetric space then $M_{E,w}$ and $P_{E,w}$ are order continuous. 
 \end{proposition}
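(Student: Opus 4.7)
The plan is to handle the two classes in sequence using the optimization formula of Proposition \ref{inf-formula} as the key tool. This approach bypasses the delicate question of whether $f_n\downarrow 0$ a.e.\ implies $f_n^*\downarrow 0$ a.e., which may fail on infinite-measure intervals.

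First I would establish order continuity of $M_{E,w}$. Let $(f_n)\subset M_{E,w}$ with $f_n\downarrow 0$ a.e., and fix $\eps>0$. By Proposition \ref{properties class M}(i) the sequence $(\|f_n\|_{M_{E,w}})$ is decreasing, so it is enough to bound it above by a null sequence. Applying Proposition \ref{inf-formula} to $f_1$, choose $v\ge 0$ with $v^*=w$, $\mathrm{supp}\,v\supset \mathrm{supp}\,f_1$, and $\|f_1/v\|_{E_v}\le \|f_1\|_{M_{E,w}}+\eps$. Then $(f_n/v)$ (with the convention $f_n/v=0$ off $\mathrm{supp}\,f_n$) is a decreasing sequence in $E_v$ converging pointwise to $0$ on $\mathrm{supp}\,v$, hence $\nu$-a.e.\ on $I$. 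By Remark \ref{rem:isom-E-E_v} the map $h\mapsto h\circ V$ is a surjective order isometry from $E|_{J_v}$ onto $E_v$; since $E|_{J_v}$ is an ideal of $E$ and hence inherits order continuity, $E_v$ is itself order continuous. Therefore $\|f_n/v\|_{E_v}\downarrow 0$, and the inf-formula gives
\[
\|f_n\|_{M_{E,w}}\le \|f_n/v\|_{E_v}\longrightarrow 0.
\]

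Next, the order continuity of $P_{E,w}$ follows by the same pattern one level up. Let $(f_n)\subset P_{E,w}$ with $f_n\downarrow 0$ a.e.; the gauge $\|\cdot\|_{P_{E,w}}$ is solid (inherited from solidity of each $M_{E,u}$ via Definition \ref{def:Pnorm}), so $(\|f_n\|_{P_{E,w}})$ is decreasing. Given $\eps>0$, pick a positive decreasing weight $u\prec w$ with $f_1\in M_{E,u}$ and $\|f_1\|_{M_{E,u}}\le \|f_1\|_{P_{E,w}}+\eps$. From $u\prec w$ one has $U(t):=\int_0^t u\,dm\le W(t)<\infty$ for $t\in I$, so the argument of the previous paragraph applies verbatim with $u$ in place of $w$, yielding order continuity of $M_{E,u}$. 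By solidity of $M_{E,u}$, each $f_n\in M_{E,u}$ and $\|f_n\|_{M_{E,u}}\downarrow 0$, which by Definition \ref{def:Pnorm} gives $\|f_n\|_{P_{E,w}}\le \|f_n\|_{M_{E,u}}\to 0$.

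The main technical step is the transfer of order continuity from $E$ to the auxiliary spaces $E_v$, supplied by Remark \ref{rem:isom-E-E_v}. A small subtlety is that this remark identifies $E_v$ with $E|_{J_v}$ where $J_v=(0,V(a))$ may be strictly smaller than $J=(0,b)$; but $E|_{J_v}$ is an ideal of $E$, and order continuity descends to ideals, so no real difficulty arises.
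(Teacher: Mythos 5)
Your proof is correct and follows essentially the same route as the paper: both rest on the optimization formula of Proposition \ref{inf-formula}, the order isometry of Remark \ref{rem:isom-E-E_v}, and the observation that $E_v$ inherits order continuity from $E$ (or from the band $\chi_{J_v}E$ when $J_v\subsetneq J$). Your reduction of the $P_{E,w}$ case to order continuity of a single $M_{E,u}$ with $u\prec w$ is only a cosmetic reorganization of the paper's use of the class $\mathcal V_P$.
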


\begin{proof} By Proposition \ref{inf-formula} and the definition of $P_{E,w}$, for each $f\in M_{E,w}$, resp.  $f\in P_{E,w}$, we have
\begin{equation}\label{eq:79}
\|f\|_{M_{E,w}} = \inf\{\|f/v\|_{E_v}: v\in \mathcal V_M\}, \hbox{ resp. } \|f\|_{P_{E,w}} = \inf\{\|f/v\|_{E_v}: v\in \mathcal V_P\},
\end{equation}
where 
\[\mathcal V_M=\{v\in L^0_+, v^*= w, \mathrm{supp}\,v\supset \mathrm{supp}\,f\},\]
\[
\mathcal V_P=\{v\in L^0_+, 0<v^*\prec w, \mathrm{supp}\,v\supset \mathrm{supp}\,f\}.
\]

Since for each $v\in \mathcal V_M$, $v^* = w$, we have  $V(a) = \int_I v\,dm = \int_I w\, dm = b$. Thus $J_v = J$, and 
by Remark \ref{rem:isom-E-E_v} each $E_v$, $v\in \mathcal V_M$, is order isometric with $E$,  so by order continuity of $E$,  $E_v$ is also order continuous. 

 As for $E_v$, $v\in  \mathcal V_P$, we note that  $V(a)\le W(a)=b$, so that  $J_v = (0,V(a))\subset J=(0,b)$. By Remark \ref{rem:isom-E-E_v},  the space $E_v$ is order isometric to the space $\chi_{J_v}E$, a band in $E$, and thus it is  also order continuous.

If $(f_n)$ is a non-negative decreasing sequence in $M_{E,w}$,  respectively $P_{E,w}$, with $f_n\downarrow 0$ a.e., choose $v$  in $\mathcal V_M$, respectively  $\mathcal V_P$, such that  $f_1/v\in E_v$. Then $f_n/v\downarrow 0$ a.e. and thus $\|f_n\|_{E_v} \downarrow 0$. Since $\|f_n\|_{M_{E,w}}\le \|f_n/v\|_{E_v}$, respectively $\|f_n\|_{P_{E,w}}\le \|f_n/v\|_{E_v}$,  we get $f_n\to 0$ in $M_{E,w}$, respectively $P_{E,w}$.
\end{proof}

Recall that the norm of $E$ is $p$-concave for some $1\le p< \infty$, if for some $C>0$, for every $f_i\in E$, $i=1,\dots,n$, $n\in \mathbb{N}$, it holds
 \[
 \left\|\left(\sum_{i=1}^n |f_i|^p\right)^{1/p}\right\|_E 
 \ge C \left(\sum_{i=1}^n\|f_i\|_E^p\right)^{1/p}. 
 \] 
The largest such constant $C$  is called concavity constant of $E$.

Applying the approach as in the proof   of Proposition \ref{prop:order-continuity}, we can show the following statement  about the $p$-concavity of $M_{E,w}$ or $P_{E,w}$, which generalizes \cite[Corollary 3.6]{KR}.

\begin{proposition}
If $E$ is $p$-concave, $1\le p < \infty$, then 
 so are the gauge of $M_{E,w}$ and the norm of $P_{E,w}$, with $p$-concavity constants not exceeding that of $E$. 
\end{proposition}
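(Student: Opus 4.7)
The plan is to exploit the infimum representations of the gauges in $M_{E,w}$ and $P_{E,w}$ together with the fact that for every admissible weight $v$, the space $E_v$ is order-isometric (or at least order-isomorphic to a band) of $E$, hence inherits the same $p$-concavity constant. Denote by $C$ the $p$-concavity constant of $E$.

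First I would handle the $M_{E,w}$ case. Fix $f_1,\dots,f_n\in M_{E,w}$ and set $g=\left(\sum_{i=1}^n|f_i|^p\right)^{1/p}$. By Proposition~\ref{inf-formula},
\[
\|g\|_{M_{E,w}}=\inf\left\{\left\|g/v\right\|_{E_v}\colon v\ge 0,\ v^*=w,\ \mathrm{supp}\,v\supset\mathrm{supp}\,g\right\}.
\]
For any such $v$, we have $\mathrm{supp}\,v\supset\mathrm{supp}\,f_i$, so $\|f_i\|_{M_{E,w}}\le \|f_i/v\|_{E_v}$ for each $i$. Pointwise $g/v=\left(\sum_i|f_i/v|^p\right)^{1/p}$, and by Remark~\ref{rem:21} $E_v$ is order-isometric to $E$, hence $p$-concave with the same constant $C$. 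Therefore
\[
\|g/v\|_{E_v}=\left\|\left(\sum_{i=1}^n|f_i/v|^p\right)^{1/p}\right\|_{E_v}\ge C\left(\sum_{i=1}^n\|f_i/v\|_{E_v}^p\right)^{1/p}\ge C\left(\sum_{i=1}^n\|f_i\|_{M_{E,w}}^p\right)^{1/p}.
\]
Taking the infimum over all admissible $v$ yields the $p$-concavity of $\|\cdot\|_{M_{E,w}}$ with constant at least $C$.

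For $P_{E,w}$ I would bootstrap from the previous step. Let $f_1,\dots,f_n\in P_{E,w}$ and $g$ as above. By Definition~\ref{def:Pnorm} and applying the preceding step to the weight $v$ in place of $w$,
\[
\|g\|_{M_{E,v}}\ge C\left(\sum_{i=1}^n\|f_i\|_{M_{E,v}}^p\right)^{1/p}\ge C\left(\sum_{i=1}^n\|f_i\|_{P_{E,w}}^p\right)^{1/p}
\]
for every positive decreasing $v$ with $v\prec w$, the last inequality because $v$ is itself an admissible weight in the infimum defining each $\|f_i\|_{P_{E,w}}$. Taking the infimum over such $v$ on the left gives the $p$-concavity of $\|\cdot\|_{P_{E,w}}$ with constant at least $C$.

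No step is a real obstacle: the key ingredient is the infimum formula of Proposition~\ref{inf-formula}, and the only point meriting care is verifying that $p$-concavity transfers from $E$ to $E_v$ with the same constant; this is a direct consequence of the order isometry $T:h\mapsto h\circ V$ from $E$ onto $E_v$ (or onto the band $\chi_{J_v}E$ in the $P_{E,w}$ case) pointed out in Remark~\ref{rem:isom-E-E_v}, since such an isometry preserves both pointwise $p$-th power operations and the norm.
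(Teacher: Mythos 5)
Your proof is correct and follows essentially the same route as the paper's: both arguments use the infimum formula of Proposition~\ref{inf-formula} (resp.\ its $P_{E,w}$ analogue), transfer the $p$-concavity of $E$ to $E_v$ via the order isometry of Remark~\ref{rem:isom-E-E_v} (onto $E$ or onto the band $\chi_{J_v}E$), and then bound each $\|f_i/v\|_{E_v}$ from below by the corresponding gauge. The only cosmetic difference is that you take the infimum over $v$ at the end rather than choosing an $\varepsilon$-near-minimizer at the outset, and you spell out the $P_{E,w}$ case (which the paper dispatches with ``analogously'') as a bootstrap from the $M$-case; both variants are sound.
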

\begin{proof}

Let $f_i\in M_{E,w}$, $i=1,\dots,n$, and $\epsilon>0$. Then by (\ref{eq:79}) there exists $v\in \mathcal V_M$ such that
\[
L:= \left\|\left(\sum_{i=1}^n |f_i|^p\right)^{1/p}\right\|_{M_{E,w}} + \epsilon \ge  \left\|\left(\frac{1}{v}\sum_{i=1}^n |f_i|^p\right)^{1/p}\right\|_{E_v}.
\]
Since $E_v$ is order isometric to $E$, so the norm $\|\cdot\|_{E_v}$ is also $p$-concave with the same constant, and thus
\begin{align*}
L  &\ge  C\left(\sum_{i=1}^n \left\|\frac{f_i}{v}\right\|^p_{E_v}\right)^{1/p}
\ge C\left(\sum_{i=1}^n \left\|f_i\right\|^p_{M_{E,w}}\right)^{1/p}.
 \end{align*}  
 
 The part on the space $P_{E,w}$ we do analogously applying that $E_v$ is order isometric to the space $\chi_{J_v}E$.

\end{proof}

Recall the definition of the Banach envelope of a quasi-normed linear space $(X, \|\cdot\|_X)$ \cite[pp. 27-28]{KPR}.  Denote by $(X^*, \|\cdot\|_{X^*})$ the dual space to $X$, that is the  space of linear functionals which are bounded with respect to the quasinorm $\|\cdot\|_X$. It is a Banach space equipped with the usual norm $\|\cdot\|_{X^*}$. Let us define a functional on $X$ by
\[ \|x\|_{\widehat X}=\sup\{ |f(x)|: f\in X^*,\hbox{ and } \|f\|_{X^*}\le 1\}. \]
If $X^*$ separates the points of $X$ then $\|\cdot\|_{\widehat X}$ is a norm on $X$. Then the {\it Banach envelope} $\widehat X$ of $X$ is simply the completion of the normed linear space $(X, \|\cdot\|_{\widehat X})$. 
One can show that the Banach envelop of $X$ is the smallest Banach space $(\widehat X, \|\cdot\|_{\widehat X})$ such that 
$\|x\|_{\widehat X} \le \|x\|_X$ for $x\in X$ and $(\widehat X)^* = X^*$.

The following result is a generalization of \cite[Corollary 4.13]{KR}. We refer to section \ref{sec:OL} where the spaces $M_{E,w}$ and $P_{E,w}$ are interpreted in the case of $E$ being an Orlicz space $L_\varphi$. 

\begin{corollary}\label{cor:envelope}
Let $E$ be a fully symmetric order continuous Banach function space with Fatou property. If moreover  $M_{E,w}$ is a linear space and its gauge is a quasinorm, and if $E$ is order continuous, then $P_{E,w}$ is the Banach envelope of $M_{E,w}$. Consequently if $w$ is regular and $E$ is order continuous then $P_{E,w} = M_{E,w}$ with equivalent norm and gauge.
\end{corollary}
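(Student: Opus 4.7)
\smallskip

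\noindent\textbf{Proof plan.} The strategy is to identify the topological dual of the quasi-normed space $(M_{E,w},\|\cdot\|_{M_{E,w}})$ with $\Lambda_{E',w}$ isometrically, from which the formula for the Banach envelope norm will drop out via K\"othe biduality. First, by Proposition \ref{prop:order-continuity}, the hypothesis that $E$ is order continuous makes $(M_{E,w},\|\cdot\|_{M_{E,w}})$ an order continuous solid quasi-Banach lattice. Every bounded linear functional $\phi\in(M_{E,w})^*$ is then automatically order continuous: if $0\le f_n\downarrow 0$ in $M_{E,w}$ then $\|f_n\|_{M_{E,w}}\to 0$, hence $\phi(f_n)\to 0$. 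By the standard representation of order continuous functionals on a solid function space (applied to simple functions with bounded support and extended via order continuity), any such $\phi$ has the form $\phi(f)=\int_I fg\,dm$ for some $g\in L^0$. Necessarily $g\in(M_{E,w})'=\Lambda_{E',w}$ by Theorem \ref{dual-M}, with $\|g\|_{\Lambda_{E',w}}=\|\phi\|_{(M_{E,w})^*}$. The reverse inclusion $\Lambda_{E',w}\hookrightarrow(M_{E,w})^*$ with the same norm is immediate from Theorem \ref{dual-M}. Thus $(M_{E,w})^*=\Lambda_{E',w}$ isometrically, and in particular $(M_{E,w})^*$ separates points, so the envelope gauge is a genuine norm.

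Second, for $f\in M_{E,w}$ the envelope norm is then
\[
\|f\|_{\widehat{M_{E,w}}}=\sup\left\{\Big|\int_I fg\,dm\Big|: g\in\Lambda_{E',w},\ \|g\|_{\Lambda_{E',w}}\le 1\right\}=\|f\|_{(\Lambda_{E',w})'}.
\]
Since $E$ has the Fatou property, Corollary \ref{dual-Pspace} applied to $E'$ (using $E''=E$) gives $(\Lambda_{E',w})'=P_{E'',w}=P_{E,w}$ isometrically. Hence $\|f\|_{\widehat{M_{E,w}}}=\|f\|_{P_{E,w}}$ on $M_{E,w}$.

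Third, I would show that $M_{E,w}$ is dense in $P_{E,w}$ with respect to $\|\cdot\|_{P_{E,w}}$. For $0\le f\in P_{E,w}$ set $f_n=(f\wedge n)\chi_{(0,n)}$. Each $f_n$ is bounded with support of finite measure, hence $f_n^*/w$ is dominated by a multiple of $\chi_{(0,N)}/w$, which belongs to $L_1\cap L_\infty(I,\omega)\subset E_w$ as in the proof of Proposition \ref{properties class M}(ii); thus $f_n\in M_{E,w}$. Since $|f-f_n|\downarrow 0$ a.e.\ and $P_{E,w}$ is order continuous (Proposition \ref{prop:order-continuity}, applied under the Fatou assumption via $P_{E,w}=Q_{E,w}$; alternatively directly from Proposition \ref{prop:order-continuity}), we get $\|f-f_n\|_{P_{E,w}}\to 0$. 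Consequently, the completion of $(M_{E,w},\|\cdot\|_{P_{E,w}})$ inside the Banach space $P_{E,w}$ is all of $P_{E,w}$, so $\widehat{M_{E,w}}=P_{E,w}$.

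For the consequence, suppose $w$ is regular. By Proposition \ref{prop:normability}, the gauge on $M_{E,w}$ is equivalent to a lattice norm $\trnorm{\cdot}$. The Fatou property of the gauge (Proposition \ref{properties class M}(v)) transfers to $\trnorm{\cdot}$ by equivalence, and Fatou property for a solid normed function space forces norm-completeness (the standard Riesz--Fischer argument applied to absolutely summable series). Thus $(M_{E,w},\trnorm{\cdot})$ is already a Banach space, so by Hahn--Banach its envelope norm coincides with $\trnorm{\cdot}$ and the envelope equals $M_{E,w}$ itself. Combining with the first part, $P_{E,w}=M_{E,w}$ with the norm of $P_{E,w}$ equivalent to the gauge of $M_{E,w}$. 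The main technical hurdle is the first step, namely the order-continuous-functional representation theorem in the quasi-normed setting; once that is in place, the rest is K\"othe biduality combined with a truncation-and-density argument.
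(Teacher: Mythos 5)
Your proposal is correct and follows essentially the same route as the paper: order continuity (Proposition \ref{prop:order-continuity}) identifies the topological duals of $M_{E,w}$ and $P_{E,w}$ with their K\"othe duals, Theorem \ref{dual-M} and Proposition \ref{prop:equal} give $(M_{E,w})^*=\Lambda_{E',w}=(P_{E,w})^*$, and completeness of $P_{E,w}$ (Corollary \ref{cor:77}) finishes the argument, with the regular-weight case reduced to Proposition \ref{prop:normability}. Your explicit verification that $M_{E,w}$ is $\|\cdot\|_{P_{E,w}}$-dense in $P_{E,w}$ via truncation is a worthwhile addition that the paper leaves implicit in its appeal to $P_{E,w}$ being the \emph{smallest} Banach space with the given dual.
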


\begin{proof}
Clearly $M_{E,w} \subset P_{E,w}$. Since by Proposition \ref{prop:order-continuity}, $M_{E,w}$ and $P_{E,w}$ are both order continuous their topological dual spaces coincide isometrically with their K\"othe duals. By Theorem \ref{dual-M} and Proposition \ref{prop:equal} we have that $(M_{E,w})' = \Lambda_{E',w} = (P_{E,w})'$.  Since  moreover $P_{E,w}$ is a Banach space by Corollary \ref{cor:77}, it must be the Banach envelope of $M_{E,w}$. The second part results from Proposition \ref{prop:normability}.
\end{proof}

\section{Applications to modular and Orlicz-Lorentz spaces}\label{sec:OL}

Here we apply the results obtained in the previous sections to 
 Orlicz spaces $E=L_\ph$. A special feature of these spaces, as well as of Orlicz-Lorentz spaces, is that their Banach space structure is induced by a modular space structure. 
In the present section we introduce modular structures on the spaces $P_{L_\ph,w}$ and $Q_{L_\ph,w}$ by defining two convex modulars P, Q, which have the same domain $\mathcal M_{L_\ph,w}: P_{L_\ph,w}=Q_{L_\ph,w}$. These modulars have the same Luxemburg, resp. Orlicz norms, which are also the norms on $P_{L_\ph,w}$ and $Q_{L_\ph,w}$  when $L_\ph$ is equipped with its Luxemburg, resp. Orlicz norms. The modular P has been already defined in \cite{KR, KLR}. This allows to compare the present work for $L_\ph$ spaces with the results in those papers. The introduction of the modular Q  seems however to be new.

\subsection{Modular spaces}We start with an introduction to modular spaces \cite{M, Nak}. 

\begin{definition}\label{def:modular}
Let $X$ be a real  vector space. For an extended real valued functional    $\rho: X \to [0,\infty]$ consider the following conditions. 

\begin{itemize}
\item[(i)] $\rho(0)=0$ and  $\rho(-x)=\rho (x)$ for every $x\in X$.  
\item[(ii)] If $x\in X$ and $\rho(tx)=0$ for every $t\ge 0$ then $x=0$.
\item[(iii)] $\rho$ is convex.
 \item[(iii')] For every $x\in X$, the extended real valued function $t\to \rho(tx)$ is convex.
\end{itemize}
 If $\rho$ satisfies conditions (i), (iii) then $\rho$ is called a {\it pseudo-modular}, and a {\it modular} if it satisfies also (ii). If $\rho$ fulfills $\rm{(i), (ii), (iii')}$ then $\rho$ will be called a {\it convex along rays-modular} (in short, CAR-modular).  There is also a notion of CAR-pseudo-modular for which (ii) has not to be satisfied. In all preceding cases, the {\it modular domain} $X_\rho$ consists of all $x\in X$ such that  $\rho(tx)<\infty$  for some $t>0$.

Note that in Musielak's  classical terminology  \cite{M}, our `modular' functionals would be called  `convex semi-modular'.
\end{definition}

It is easy to check that for $\rho$ a (pseudo-) modular, $X_\rho$ is a vector space, and
for $\rho$ a CAR-modular it may be only shown to be a symmetric cone.

If $\rho$ is a modular  (resp. a pseudo-modular) then two norms  (resp. semi-norms) on $X_\rho$  are classically associated with $\rho$, which are defined as follows.
\begin{itemize}
 \item[--] the {\it Luxemburg}  (or second Nakano  \cite{Nak}) norm is the Minkowski functional of the convex set
 $ U=\{x\in E: \rho(x)\le 1\}$, thus 
 \begin{equation}\label{eq:8.1}
  \|x\|_\rho=\inf\{\lambda>0: \rho(x/\lambda)\le 1\},
  \end{equation}
 \item[--] the {\it  Orlicz} (or first Nakano  \cite{Nak}) norm is given by Amemiya's formula \cite{M}
\begin{equation}\label{eq:8.2}
  \|x\|_\rho^0=\inf_{\lambda>0} {1+\rho(\lambda x)\over \lambda} =\inf_{t>0} \left(t+t\rho\left(\frac xt\right)\right).
\end{equation}
 \end{itemize}
 There is another expression of the Luxemburg norm, similar to Amemiya's formula. In fact we have 
 \begin{equation}\label{eq:8.3} 
 \|x\|_\rho=\inf_{\lambda>0} {1\vee\rho(\lambda x)\over \lambda} =\inf_{t>0} \left(t\vee t\rho\left(\frac xt\right)\right).
 \end{equation}
Indeed, 
 \[ 
 \inf_{t>0} (t\vee t\rho(t^{-1}x))\ge \inf_{t\ge \|x\|_\rho} t \wedge \inf_{t<\|x\|_\rho} t\rho(t^{-1}x)=\|x\|_\rho\wedge \lim_{t\uparrow\|x\|_\rho}  t\rho(t^{-1}x)=\|x\|_\rho,
 \]
since   by convexity of $\rho$, the map $t\mapsto t\rho(t^{-1} x)$ is decreasing on $(0,\infty)$. On the other hand   
\[
\inf_{t>0} (t\vee t\rho(t^{-1}x))\le \inf_{t > \|x\|_\rho} (t\vee t\rho(t^{-1}x))= \|x\|_\rho
\]
since $\rho(t^{-1}x) \le 1$ for $t> \|x\|_\rho$.

It is clear that a pseudo-modular is a modular if and only if the associated Luxemburg or Orlicz semi-norms are norms.
 
 If we replace  the modular $\rho$ by a CAR-modular then all formulas (\ref{eq:8.1}), (\ref{eq:8.2}) and (\ref{eq:8.3}) remain valid although the functionals $\|\cdot\|_\rho$ and $\|\cdot\|_\rho^0$ are not norms on $X_\rho$ since the triangle inequality may be not satisfied. They are however {\it gauges} that is positively homogeneous functionals.
 
 By (\ref{eq:8.2}) and (\ref{eq:8.3}), the equivalence of $\|\cdot\|_\rho$ and $\|\cdot\|_\rho^0$ is immediate.
 
  \begin{lemma}\label{lem:infmod}
Let $X$ be a vector space and $\rho: X \to [0,\infty]$ be a  (pseudo-, CAR-) modular on $X$. Let $\rho_v: X \to [0,\infty]$, $v\in \mathcal{V}$, be a family of CAR-modulars on $X$.
 If
\[
\rho(x) = \inf_{v\in \mathcal{V}}\rho_v(x),
\]
then   the modular domain of $\rho$ is 
\[ X_\rho=\bigcup_{v\in\mathcal{V}} X_{\rho_v}
\]
and its associated norms are
\[
\|x\|_\rho = \inf\{\|x\|_{\rho_v}: \ v\in \mathcal{V}\} \ \ \text{and} \ \  \|x\|^0_\rho = \inf\{\|x\|^0_{\rho_v}: \ v\in \mathcal{V}\}.
\]
\end{lemma}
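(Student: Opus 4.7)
The argument is essentially a manipulation of Amemiya-type formulas, once one observes that the operation of taking infima commutes suitably with the operations used to build the norms from the modular.

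First I would verify the identification of the modular domain. By definition, $x\in X_\rho$ means $\rho(tx)<\infty$ for some $t>0$, i.e.\ $\inf_v \rho_v(tx)<\infty$; this forces the existence of some $v\in\mathcal V$ with $\rho_v(tx)<\infty$, so $x\in X_{\rho_v}$. Conversely if $x\in X_{\rho_v}$ for some $v$, pick $t>0$ with $\rho_v(tx)<\infty$; then $\rho(tx)=\inf_{u\in\mathcal V}\rho_u(tx)\le\rho_v(tx)<\infty$, so $x\in X_\rho$.

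Next I would handle the Orlicz norm, which is the easier of the two. Starting from Amemiya's formula \eqref{eq:8.2}, since $t>0$ we have $t\rho(x/t)=t\inf_{v}\rho_v(x/t)=\inf_v t\rho_v(x/t)$, hence
\[
\|x\|_\rho^0=\inf_{t>0}\bigl(t+t\rho(x/t)\bigr)=\inf_{t>0}\inf_{v\in\mathcal V}\bigl(t+t\rho_v(x/t)\bigr)=\inf_{v\in\mathcal V}\|x\|^0_{\rho_v},
\]
where the last step uses Fubini for infima.

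For the Luxemburg norm, the same idea applies but via formula \eqref{eq:8.3}. The only additional ingredient is the identity
\[
t\vee \inf_{v\in\mathcal V} a_v = \inf_{v\in\mathcal V}(t\vee a_v),\qquad t\ge 0,\ a_v\in[0,\infty],
\]
which holds because $\max(t,\cdot)$ is monotone and continuous from the right in the second argument. Applying this with $a_v=t\rho_v(x/t)$ gives
\[
\|x\|_\rho=\inf_{t>0}\bigl(t\vee t\rho(x/t)\bigr)=\inf_{t>0}\inf_{v\in\mathcal V}\bigl(t\vee t\rho_v(x/t)\bigr)=\inf_{v\in\mathcal V}\|x\|_{\rho_v}.
\]

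The only step requiring any care is the commutation $t\vee \inf_v a_v=\inf_v(t\vee a_v)$; the $+\infty$ convention for $\rho_v(x/t)$ when $x/t\notin X_{\rho_v}$ causes no trouble since all arithmetic is monotone. Everything else reduces to interchanging two infima, which is unconditional. No further structural property of the individual $\rho_v$ (convexity, faithfulness, etc.) enters the computation, which is why the conclusion is stated uniformly for the pseudo-, modular, and CAR-modular cases.
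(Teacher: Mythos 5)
Your proof is correct and follows essentially the same route as the paper: both arguments rewrite the Luxemburg norm via the Amemiya-type formula (\ref{eq:8.3}), commute $t\vee(\cdot)$ with the infimum over $v$, and then interchange the two infima, with the Orlicz norm handled analogously from (\ref{eq:8.2}). Your explicit justification of the identity $t\vee\inf_v a_v=\inf_v(t\vee a_v)$ and of the domain identification is slightly more detailed than the paper's, but the substance is identical.
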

\begin{proof}  For $x\in X$ we have 
\[\|x\|_\rho = \inf_{t>0}(t \vee t\rho(t^{-1})) = \inf_{t>0} (t \vee t \inf_{v\in \mathcal{V} }\rho_v(t^{-1}x)) = \inf_{v\in\mathcal{V}} \inf_{t>0} t( 1 \vee \rho_v(t^{-1}x)) = \inf_{v\in\mathcal{V}} \|x\|_{\rho_v}. \]
The formula for Amemiya norm follows analogously.
\end{proof}

\begin{lemma} \label{lem:convexifmod} Let $X\subset L^0(\Omega)$ be a vector space which is closed under rearrangements, i.e. $f^*\in X$ whenever $f\in X$. Assume $\rho: X \to [0,\infty]$  satisfies conditions $\rm{(i), (ii)}$ of Definition \ref{def:modular}, $\rho$ is convex on the cone of decreasing non-negative functions in $X$,  $\rho$ is symmetric that is  $\rho(f^*) = \rho(f)$, and $\rho$ is monotone that is $\rho(f) \le \rho(g)$ if $|f|\le |g|$, $f,g\in X$. Then for $f\in X$,
\[
\bar\rho(f) = \inf\{\rho(g^*): f\prec g, g\in X\}
\]
is a symmetric   pseudo-modular on $X$, monotone with respect to the relation $\prec$, with    associated  Luxemburg and Amemiya  semi-norms given respectively by
\[
\|f\|_{\bar\rho} = \inf\{\|g\|_\rho: f\prec g, g\in X\}\ \ \ \text{and}\ \ \  \|f\|^0_{\bar\rho} = \inf\{\|g\|^0_\rho: f\prec g, g\in X\}.
\]
\end{lemma}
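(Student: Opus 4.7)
The plan is to verify the pseudo-modular properties of $\bar\rho$, together with its $\prec$-monotonicity and symmetry, and then to extract the two formulas for $\|\cdot\|_{\bar\rho}$ and $\|\cdot\|_{\bar\rho}^0$ directly from the ray-convex expressions (\ref{eq:8.2}) and (\ref{eq:8.3}).

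First I would dispose of the easy axioms. Taking $g=0$ in the defining infimum yields $\bar\rho(0)=0$. Since $(-f)^*=f^*$ and the relation $f\prec g$ depends only on the rearrangements $f^*$ and $g^*$, the admissible sets defining $\bar\rho(f)$, $\bar\rho(-f)$ and $\bar\rho(f^*)$ all coincide, giving both the evenness in axiom~(i) and symmetry. Monotonicity in $\prec$ is immediate from transitivity: if $f\prec h$ and $h\prec g$ then $f\prec g$, so every competitor for $\bar\rho(h)$ is also a competitor for $\bar\rho(f)$.

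The key step is convexity. For $f_1,f_2\in X$ and $\alpha\in[0,1]$, assuming both $\bar\rho(f_i)<\infty$ (otherwise the inequality is trivial), given $\eps>0$ I would pick $g_i\in X$ with $f_i\prec g_i$ and $\rho(g_i^*)\le\bar\rho(f_i)+\eps$, and set $g:=\alpha g_1^*+(1-\alpha)g_2^*$. Since $X$ is a vector space closed under rearrangements, $g\in X$; since $g_1^*,g_2^*$ are decreasing and non-negative, so is $g$, hence $g=g^*$. The chain
\[
(\alpha f_1+(1-\alpha)f_2)^* \prec \alpha f_1^*+(1-\alpha)f_2^* \prec \alpha g_1^*+(1-\alpha)g_2^* = g^*
\]
combines $(h_1+h_2)^*\prec h_1^*+h_2^*$ with the weighted sum of $f_i^*\prec g_i^*$, whence $\alpha f_1+(1-\alpha)f_2\prec g$. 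Invoking the hypothesis that $\rho$ is convex on the cone of decreasing non-negative functions, where both $g_1^*$ and $g_2^*$ lie, gives
\[
\bar\rho(\alpha f_1+(1-\alpha)f_2) \le \rho(g^*) \le \alpha\rho(g_1^*)+(1-\alpha)\rho(g_2^*) \le \alpha\bar\rho(f_1)+(1-\alpha)\bar\rho(f_2)+\eps,
\]
and letting $\eps\to 0$ concludes. The essential feature is that $g$ has been built inside the cone on which $\rho$ is actually assumed convex; this is where the partial convexity hypothesis becomes usable.

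For the two semi-norms, I would first observe that $\rho$ is itself a CAR-modular: along every ray $\{tf:t\in\mathbb{R}\}$, the map $t\mapsto\rho(tf)=\rho(|t|f^*)$ is convex in $t$ because $\rho$ is convex on the decreasing non-negative cone, so formulas (\ref{eq:8.2}) and (\ref{eq:8.3}) apply to $\rho$. Substituting $h=g/t$ in the infimum defining $\bar\rho(f/t)$ and using the symmetry of $\rho$ gives $\bar\rho(f/t)=\inf\{\rho(g/t):f\prec g,\,g\in X\}$. Plugging this into (\ref{eq:8.3}) for $\bar\rho$, exchanging infima, and applying the identity $t\vee\inf_g a_g=\inf_g(t\vee a_g)$ produces
\[
\|f\|_{\bar\rho}=\inf_{f\prec g}\inf_{t>0}\bigl(t\vee t\rho(g/t)\bigr)=\inf\{\|g\|_\rho: f\prec g\},
\]
and the analogous computation with (\ref{eq:8.2}) yields the corresponding formula for $\|\cdot\|_{\bar\rho}^0$. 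The main obstacle is genuinely the convexity step; everything else is a routine manipulation of infima and rearrangements.
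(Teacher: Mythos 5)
Your proof is correct and follows essentially the same route as the paper's: the convexity step via $g_i$ chosen within $\eps$ of the infimum and the chain $\alpha f_1+(1-\alpha)f_2\prec \alpha f_1^*+(1-\alpha)f_2^*\prec \alpha g_1^*+(1-\alpha)g_2^*$, followed by exchanging the two infima in formulas (\ref{eq:8.2}) and (\ref{eq:8.3}), is exactly the paper's argument. Your extra remark justifying that $\rho$ is a CAR-modular (so that those formulas apply) is a small point the paper takes for granted, but it is not a different approach.
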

\begin{proof}
It is clear that the   functional $\bar\rho$  satisfies (i) of Definition  \ref{def:modular}, and  is  symmetric and monotone with respect to $\prec$.  Now let  $f_1,f_2\in X_{\bar\rho}$ with $\bar\rho(f_i) < \infty$, $i=1,2$, and $t_1,t_2\ge 0$ with $t_1+t_2=1$. Given $\eps>0$ choose $g_1,g_2\in X$ such that $f_i \prec g_i$ and  $\rho(g_i)\le \bar\rho(f_i)+\eps$, $i=1,2$. Then in view of 
\[t_1f_1+t_2f_2\prec t_1f_1^*+t_2f_2^*\prec t_1g_1^*+t_2g_2^*,\]
we have, by symmetry and convexity of $\rho$ on the cone of decreasing functions  
\begin{align*}
 \bar\rho(t_1f_1+t_2f_2)& 
 \le \rho(t_1g_1^*+t_2g_2^*) 
 \le t_1 \rho(g_1)+t_2 \rho(g_2) 
 \le t_1\bar\rho(f_1)+t_2\bar\rho(f_2)+\eps,
\end{align*}
which shows that $\bar\rho$ is convex.  Since $\rho$ is a CAR-modular, formulas (\ref{eq:8.2}) and (\ref{eq:8.3}) are satisfied. 
Moreover, 
\begin{align*}
\|f\|_{\bar\rho}&=\inf_{t>0} t(1\vee \bar\rho(t^{-1}f))
= \inf_{t>0} t(1\vee \inf_{f\prec g}\rho(t^{-1}g^*))
= \inf_{f\prec g}\inf_{t>0} t(1\vee \rho(t^{-1}g) = \inf_{f\prec g} \|g\|_\rho,
\end{align*}
where $g\in X$. Similarly we get the second formula associated with  Amemiya functional.
\end{proof}

\subsection{Orlicz-Lorentz spaces and their K\"othe duals}\label{sec:OL}

Assume in this section that $W<\infty$ on $I$.
Let $E= L_\varphi$ be an Orlicz space on $J$. As  was mentioned in section \ref{subsec:21}, $L_\varphi$ is a modular space generated by the modular ${I_\varphi(f)}=\int_J\varphi(|f|)\,dm$. Then $E_w = (L_\varphi)_w$ is the set of $f\in L^0(J)$ such that for some $\lambda>0$,  $\int_J\varphi(\lambda |f|) w \, dm < \infty$, so it is a modular space defined by the modular $ \int_J\varphi(|f|)w\,dm$. Hence the generalized Lorentz space $\Lambda_{L_\varphi,w}$ consists of all $f\in L^0(I)$ such that $f^*\in (L_\varphi)_w$, so it is a modular space corresponding to the modular 
\begin{equation}\label{eq:modularOL}
 \Phi(f) :  = \int_I \varphi(f^*)w\,dm.
\end{equation}
This space is usually called an Orlicz-Lorentz space and is denoted by $\Lambda_{\varphi,w}$ \cite{K, KR09, KR}.  Setting now for $f\in L^0=L^0(I)$,
\[
 \mathrm M(f) : =  \int_I \ph\left({f^*\over w}\right)w\,dm,
\]
then the functional M is a CAR-modular on $L^0$. By definition, the space $M_{L_\varphi,w}$ consists of all $f\in L^0$ such that $f^*/w \in (L_\varphi)_w$. It follows that this space is the modular space induced by the CAR-modular $\mathrm{M}$.  Moreover the Luxemburg and Amemiya gauges associated with the modular $M$ on $M_{L_{\ph,w}}$  coincide with those defined  in section \ref{subsec:def-class-M} on $M_{E,w}$ when $E=L_\ph$ is equipped with its Luxemburg and Amemiya norms respectively.

Now we will characterize the spaces $Q_{L_\ph,w}$ and $P_{L_\ph,w}$.

\begin{lemma}\label{lem:P}
Let for $f\in
 L^0$, 
\begin{equation}
\mathrm P(f): =\inf\left\{\mathrm M_v(f):  v\prec w,\ v>0,\ v\downarrow \right\}\ \ \ \text{where} \ \ \  \mathrm M_v(f) = \int_I \varphi\left(\frac{f^*}{v}\right) v\, dm. 
\end{equation}\label{eq:P}
Then $\mathrm P$ is a convex modular  with domain $P_{L_\ph,w}$ and the Luxemburg and Orlicz norms associated with this modular coincide with the norms on $P_{L_\ph,w}$ given by Definition \ref{def:Pnorm}, associated with the Luxemburg and Orlicz  norms respectively on $L_\ph$.
\end{lemma}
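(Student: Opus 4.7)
The plan is to identify each $\mathrm{M}_v$ as a CAR-modular whose modular domain coincides with the class $M_{L_\varphi,v}$, then to prove convexity of the lower envelope $\mathrm P=\inf_v \mathrm{M}_v$ directly (this being the crux), and finally to read off the two norm formulas from Lemma~\ref{lem:infmod}.

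That each $\mathrm{M}_v$ is a CAR-modular with modular domain $M_{L_\varphi,v}$ is immediate from convexity of $\varphi$ and the invariance $(-f)^*=f^*$, and its Luxemburg (resp.\ Amemiya) norm coincides with the gauge $\|\cdot\|_{M_{L_\varphi,v}}$ introduced in Section~\ref{sec:classes-M} when $L_\varphi$ carries its Luxemburg (resp.\ Orlicz) norm. The conditions $\mathrm P(-f)=\mathrm P(f)$ and $\mathrm P(0)=0$ are then obvious, and the identification $\mathcal D(\mathrm P)=\bigcup_v M_{L_\varphi,v}=P_{L_\varphi,w}$ is built into the definitions. Once convexity of $\mathrm P$ is established, Lemma~\ref{lem:infmod} applied to the family $\{\mathrm{M}_v\}$ yields $\|f\|_{\mathrm P}=\inf_v \|f\|_{M_{L_\varphi,v}}$ and the analogous identity for the Orlicz norm, matching Definition~\ref{def:Pnorm}. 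Faithfulness then follows from Lemma~\ref{lem:M-P-M}: every admissible $v$ is decreasing and satisfies $v(t)\le V(t)/t\le W(t)/t=\tilde w(t)$ on $I$, so Lemma~\ref{compar-norms} gives $\|f\|_{M_{L_\varphi,\tilde w}}\le \|f\|_{M_{L_\varphi,v}}$ and thus $\|f\|_{M_{L_\varphi,\tilde w}}\le \|f\|_{\mathrm P}$; since $\mathrm{M}_{\tilde w}$ is a faithful CAR-modular ($\tilde w>0$ and $\varphi$ vanishes only at $0$), any $f$ with $\|f\|_{\mathrm P}=0$ must be zero.

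Turning to convexity, fix $f_1,f_2\in L^0$, $\lambda_1,\lambda_2\ge 0$ with $\lambda_1+\lambda_2=1$, and $\varepsilon>0$, and first pick admissible weights $v_i$ with $\mathrm{M}_{v_i}(f_i)\le \mathrm P(f_i)+\varepsilon$. The key step is to replace the rearranged quotient $f_i^*/v_i$ by a non-rearranged quotient $|f_i|/u_i$: by Proposition~\ref{lem:equivalence} (or its $(1+\varepsilon)$-perturbed version when $f_i^*$ has a positive limit at infinity) there exist $u_i\ge 0$ with $u_i^*=v_i$, $\mathrm{supp}\,u_i\supset \mathrm{supp}\,f_i$, and
\[
\int_I \varphi(f_i^*/v_i)\,v_i\,dm=\int_I \varphi(|f_i|/u_i)\,u_i\,dm.
\]
Setting $u=\lambda_1 u_1+\lambda_2 u_2$, the functions $\alpha_i:=\lambda_i u_i/u$ are non-negative with $\alpha_1+\alpha_2=1$ on $\{u>0\}$, so convexity of $\varphi$ gives the pointwise estimate
\[
u\,\varphi\!\left(\frac{|\lambda_1 f_1+\lambda_2 f_2|}{u}\right)\le \lambda_1 u_1\,\varphi(|f_1|/u_1)+\lambda_2 u_2\,\varphi(|f_2|/u_2).
\]
Integrating yields $\int_I \varphi(|\lambda_1 f_1+\lambda_2 f_2|/u)\,u\,dm\le \lambda_1 \mathrm{M}_{v_1}(f_1)+\lambda_2 \mathrm{M}_{v_2}(f_2)$. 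Finally, Theorem~\ref{H-L} (applied with $u^*$ in the role of $w$ and $u$ in the role of $v$) bounds $\mathrm{M}_{u^*}(\lambda_1 f_1+\lambda_2 f_2)$ from above by the same integral, and $u^*$ is positive, decreasing and submajorized by $\lambda_1 v_1+\lambda_2 v_2\prec w$, hence admissible. Letting $\varepsilon\downarrow 0$ delivers $\mathrm P(\lambda_1 f_1+\lambda_2 f_2)\le \lambda_1 \mathrm P(f_1)+\lambda_2 \mathrm P(f_2)$.

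The main obstacle is the interplay between the rearrangement $f\mapsto f^*$ hidden in $\mathrm{M}_v(f)=\int \varphi(f^*/v)\,v\,dm$ and the linear convex combination: since $f\mapsto f^*$ is non-linear, the convexity of $\varphi$ alone does not yield convexity of $\mathrm{M}_v$ (let alone of $\mathrm P$). The detour through the ``unsymmetric'' representation furnished by Proposition~\ref{lem:equivalence}, coupled with the submajorization of Theorem~\ref{H-L}, is exactly what permits the convex combination to be taken at the level of the weights $u_i$, thereby transferring the convexity of $\varphi$ to $\mathrm P$.
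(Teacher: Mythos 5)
Your proof is correct, but it takes a genuinely different route on the one point where the paper does not argue at all: the paper simply cites \cite[Theorem 4.7]{KR} for the convexity of $\mathrm P$ and then, exactly as you do, observes that each $\mathrm M_v$ is a CAR-modular and invokes Lemma \ref{lem:infmod} for the two norm formulas. Your self-contained convexity argument --- desymmetrize each $\mathrm M_{v_i}(f_i)$ into $\int_I\varphi(|f_i|/u_i)u_i\,dm$ via Proposition \ref{lem:equivalence}, take the convex combination at the level of the weights $u_i$ where the convexity of $\varphi$ applies pointwise, and resymmetrize with Theorem \ref{H-L} --- is essentially the strategy the paper uses to prove Proposition \ref{prop:normability}, transplanted to the modular setting; it buys independence from the external reference at the cost of a few lines. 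Three small points deserve attention. First, the positivity of $u^*$ (needed for admissibility) is not automatic from ``$u_i\ge 0$'' but does hold: since $u_1^*=v_1>0$ on $I$ one has $m(\mathrm{supp}\,u)\ge m(\mathrm{supp}\,u_1)=m(I)$, hence $u^*>0$ on $I$. Second, in the exceptional case $I=(0,\infty)$ with $\lim_{t\to\infty}f_i^*(t)=\alpha_i>0$, the $(1+\varepsilon)$-perturbed version of Proposition \ref{lem:equivalence} gives only an inequality against $\mathrm M_{v_i}((1+\varepsilon)f_i)$, which need not tend to $\mathrm M_{v_i}(f_i)$; but this case is vacuous, because $\varphi(s)\ge\varphi(1)s$ for $s\ge1$ and $v_i\downarrow$ force $\mathrm M_{v_i}(f_i)=\infty$ for every admissible $v_i$, so convexity is trivial there. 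Third, the last step uses the modular (not norm) consequence of Theorem \ref{H-L}, so one must pass from the submajorization $\bigl(h^*/u^*\bigr)^{*,u^*}\prec\bigl(h/u\bigr)^{*,u}$ to $\mathrm M_{u^*}(h)\le\int_I\varphi(|h|/u)\,u\,dm$ via Fact \ref{lem:submaj-phi}; this should be said explicitly. With these clarifications the argument is complete.
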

\begin{proof}  The modular $\mathrm P$ is convex by  \cite[Theorem 4.7]{KR} and its proof. By convexity of $\varphi$ it is clear that the function $t \mapsto \mathrm M_v(tf)$  is convex for every $f\in L^0$. Therefore $ \mathrm M_v$ is a CAR-modular for every $v>0$.   The last part of the lemma   is a consequence of Lemma \ref{lem:infmod} by letting $\rho(f) = \mathrm P(f)$, $\mathcal{V}=\{v\prec w,\ v>0,\ v\downarrow \}$ and $\rho_v(f) = \mathrm M_v(f)$. 
\end{proof}

\begin{lemma}\label{lem:Q}
Let for $f\in L^0$  
\begin{equation}\label{eq:Q}
\mathrm Q(f): =\inf\left\{\mathrm M(g):  f\prec g, \ g\in M_{L_\ph,w}\right\}.
\end{equation}
Then $\mathrm Q$ is a convex modular  with modular domain $Q_{L_\ph,w}$ and the Luxemburg and Orlicz norms associated with this modular coincide with the norms on $Q_{L_\ph,w}$ given by Definition \ref{def:Qspace}, associated with the Luxemburg and Orlicz norms respectively on $L_\ph$.
\end{lemma}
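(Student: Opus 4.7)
The plan is to apply Lemma~\ref{lem:convexifmod} to the functional $\rho = \mathrm M$, viewed as a map $L^0(I) \to [0,+\infty]$ by extending $\mathrm M$ to take value $+\infty$ outside $M_{L_\ph,w}$. First I would verify the hypotheses of that lemma for $X = L^0(I)$: $\mathrm M$ is symmetric ($\mathrm M(f^*) = \mathrm M(f)$ by definition), monotone ($|f|\le|g|$ implies $f^*\le g^*$ pointwise), and trivially satisfies $\mathrm M(0) = 0$ and $\mathrm M(-f) = \mathrm M(f)$. Condition (ii) of Definition~\ref{def:modular} holds since $\int_I\ph(tf^*/w)\,w\,dm = 0$ for every $t > 0$ forces $f^* = 0$ a.e., using that $\ph > 0$ on $(0,\infty)$ and $w > 0$. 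The only nontrivial hypothesis is convexity of $\mathrm M$ on the cone of non-negative decreasing functions: for such $f,g$ and $t\in[0,1]$ the combination $tf+(1-t)g$ is again decreasing, so $(tf+(1-t)g)^* = tf+(1-t)g$, and convexity of $\ph$ directly yields $\mathrm M(tf+(1-t)g)\le t\mathrm M(f)+(1-t)\mathrm M(g)$.

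Lemma~\ref{lem:convexifmod} then produces a symmetric convex pseudo-modular $\bar{\mathrm M}(f) = \inf\{\mathrm M(g^*): f\prec g,\ g\in L^0(I)\}$. Because $\mathrm M$ is symmetric and equals $+\infty$ outside $M_{L_\ph,w}$, this infimum coincides with the one in (\ref{eq:Q}), so $\mathrm Q = \bar{\mathrm M}$. The same lemma delivers the formulas
\[ \|f\|_{\mathrm Q} = \inf\{\|g\|_{\mathrm M}: f\prec g\}, \qquad \|f\|^0_{\mathrm Q} = \inf\{\|g\|^0_{\mathrm M}: f\prec g\}. \]

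Next I would identify the modular domain and the norms concretely. For $g\in M_{L_\ph,w}$ the Luxemburg norm attached to the modular $\mathrm M$ satisfies
\[ \|g\|_{\mathrm M} = \inf\bigl\{\lambda>0:\ \textstyle\int_I\ph\bigl(g^*/(\lambda w)\bigr)w\,dm \le 1\bigr\} = \|g^*/w\|_{(L_\ph)_w} = \|g\|_{M_{L_\ph,w}} \]
when $L_\ph$ carries its Luxemburg norm, and the analogous identity holds for the Amemiya/Orlicz norm. Substituting into the formulas from the previous step and comparing with Definition~\ref{def:Qspace} yields exactly $\|f\|_{\mathrm Q} = \|f\|_{Q_{L_\ph,w}}$ and $\|f\|^0_{\mathrm Q} = \|f\|^0_{Q_{L_\ph,w}}$. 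For the modular domain, $f\in X_{\mathrm Q}$ means $\mathrm Q(tf)<\infty$ for some $t>0$, i.e.\ the existence of $g\in M_{L_\ph,w}$ with $tf\prec g$; rescaling $g$, this is equivalent to $f\in Q_{L_\ph,w}$ by Definition~\ref{def:Qspace}.

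Finally, $\mathrm Q$ is genuinely a modular (not just a pseudo-modular), because its Luxemburg semi-norm has just been identified with the faithful norm $\|\cdot\|_{Q_{L_\ph,w}}$ provided by Theorem~\ref{class Q}(ii). The only delicate point in the whole argument is the convexity of $\mathrm M$ on the decreasing cone, which is handled once one notices that rearrangement acts trivially on convex combinations of already decreasing non-negative functions; everything else reduces to repackaging Lemma~\ref{lem:convexifmod} and bookkeeping the identification between the Luxemburg/Orlicz norm of a modular and the abstract norm on $M_{L_\ph,w}$ inherited from $L_\ph$.
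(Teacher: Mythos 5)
Your proposal is correct and follows essentially the same route as the paper: apply Lemma~\ref{lem:convexifmod} with $\rho=\mathrm M$ to get that $\mathrm Q$ is a symmetric pseudo-modular whose Luxemburg and Amemiya semi-norms are the infima $\inf\{\|g\|_{\mathrm M}:f\prec g\}$, identify these with the norms of Definition~\ref{def:Qspace} via $\|g\|_{\mathrm M}=\|g\|_{M_{L_\ph,w}}$, and deduce that $\mathrm Q$ is a genuine modular from the faithfulness of $\|\cdot\|_{Q_{L_\ph,w}}$ established in Theorem~\ref{class Q}. Your write-up merely makes explicit the verification of the hypotheses of Lemma~\ref{lem:convexifmod} (in particular convexity of $\mathrm M$ on the decreasing cone) and the identification of the modular domain, which the paper leaves implicit.
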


\begin{proof}  Applying Lemma \ref{lem:convexifmod}, with $\rho(f) = \mathrm M(f)$ and $\bar\rho(f) = \mathrm Q(f)$ gives that Q is a symmetric pseudo-modular, and by Lemma \ref{lem:infmod} its Luxemburg and Orlicz semi-norms coincide with the norms on $Q_{L_\ph,w}$ given by Definition \ref{def:Qspace}, when $L_\ph$ is equipped with its Luxemburg and Orlicz norms, respectively. In particular those semi-norms are in fact norms and Q is a modular.
\end{proof}

The next  fact is well known and can be easily deduced from \cite[Theorem 7.4.1]{G}.
We provide in Appendix a completely different and self-contained proof of it for the convenience of the reader. 

\begin{fact}\label{lem:submaj-phi}
Let $\psi:[0,\infty)\to [0,\infty)$ be a convex increasing function. If $f,g\in L_1+L_\infty\subset L^0(\Omega, \mathcal{A}, \mu)$ with $f\prec_\mu g$ then $\psi(f)\prec_\mu \psi(g)$.
\end{fact}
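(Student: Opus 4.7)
The plan is to prove the stronger pointwise inequality
\[
\int_0^x \psi(f^{*,\mu})\,dm \le \int_0^x \psi(g^{*,\mu})\,dm \quad \text{for all } x\in (0,\mu(\Omega)),
\]
from which $\psi(f)\prec_\mu \psi(g)$ follows at once, because $\psi$ is increasing, so $\psi(f^{*,\mu})=\psi(|f|)^{*,\mu}$ (and similarly for $g$) is already a decreasing function coinciding with its own rearrangement. After this reduction we may assume, without loss of generality, that $f=f^{*,\mu}$ and $g=g^{*,\mu}$ are non-negative decreasing on $I=(0,\mu(\Omega))$ with $F(x):=\int_0^x f\,dm\le \int_0^x g\,dm=:G(x)$ for every $x\in I$.

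The backbone of the argument is the classical integral representation of a convex increasing function. Since $\psi:[0,\infty)\to[0,\infty)$ is convex increasing, its right-derivative $\psi'_+$ is finite and non-decreasing on $[0,\infty)$; setting $\beta:=\psi'_+(0)\ge 0$ and $\nu:=d\psi'_+$ (the Lebesgue–Stieltjes measure of $\psi'_+$, a positive Borel measure on $(0,\infty)$), Fubini on the defining identity $\psi(t)-\psi(0)=\int_0^t \psi'_+(s)\,ds$ yields
\[
\psi(t)=\psi(0)+\beta t+\int_{(0,\infty)}(t-\lambda)_+\,d\nu(\lambda),\qquad t\ge 0.
\]
Substituting this into $\int_0^x \psi(f(s))\,ds$ and applying Tonelli (everything non-negative), one gets
\[
\int_0^x \psi(f(s))\,ds=\psi(0)x+\beta F(x)+\int_{(0,\infty)}\left(\int_0^x (f(s)-\lambda)_+\,ds\right)d\nu(\lambda),
\]
and the analogous expression holds for $g$.

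The proof now reduces to showing the \emph{level inequality} $\int_0^x(f-\lambda)_+\,ds\le \int_0^x(g-\lambda)_+\,ds$ for every $\lambda\ge 0$. Here the key elementary lemma, valid for any decreasing non-negative $h$ on $I$ with $H(y):=\int_0^y h$, is
\[
\int_0^x (h(s)-\lambda)_+\,ds=\sup_{0\le y\le x}\bigl(H(y)-\lambda y\bigr),
\]
because the concave function $y\mapsto H(y)-\lambda y$ achieves its maximum on $[0,x]$ at $y=\min(x,y_\lambda)$, where $y_\lambda=m\{h>\lambda\}$ is the point at which $h$ crosses the level $\lambda$. Applied to $f$ and $g$ (both decreasing) and combined with $F\le G$ pointwise on $I$, this gives
\[
\int_0^x (f-\lambda)_+\,ds=\sup_{y\le x}(F(y)-\lambda y)\le \sup_{y\le x}(G(y)-\lambda y)=\int_0^x (g-\lambda)_+\,ds.
\]

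Assembling the three pieces yields the desired inequality term-by-term: the $\psi(0)x$ part is identical, the $\beta F(x)\le \beta G(x)$ part uses $F\le G$, and the $\nu$-integral inherits the level-by-level estimate. The main obstacle is not an estimate itself but the verification that Fubini/Tonelli applies and that the integral representation of $\psi$ is well-defined on all of $[0,\infty)$; both are routine once one observes that $\psi'_+$ is locally bounded because $\psi$ is real-valued. Note that the argument uses only the representation of convex increasing functions and Hardy's-type comparison of integrals of decreasing functions, avoiding any appeal to Calderón's characterization of submajorization or Lorentz–Shimogaki-type rearrangement inequalities.
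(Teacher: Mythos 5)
Your proof is correct, but it takes a genuinely different route from the paper's. The paper works with the divided difference $D(x,y)=(\psi(x)-\psi(y))/(x-y)$ (with $D(x,x)=\psi'_+(x)$), observes that $D(g,f)$ is a non-negative decreasing function when $f,g$ are decreasing, and applies Hardy's Lemma to the pair $(g-f)_-\,,(g-f)_+$ with weight $D(g,f)$, using the identity $\psi(g)-\psi(f)=(g-f)D(g,f)$; it then needs a separate truncation step $f\wedge n$ to cover the case where $\psi(f)$ is not locally integrable. You instead decompose $\psi$ over the extremal convex increasing functions $t\mapsto(t-\lambda)_+$ via the Lebesgue--Stieltjes representation of $\psi'_+$, and reduce the claim to the level inequality $\int_0^x(f-\lambda)_+\le\int_0^x(g-\lambda)_+$, which you prove through the identity $\int_0^x(h-\lambda)_+\,dm=\sup_{0\le y\le x}(H(y)-\lambda y)$ for decreasing $h$ --- essentially the dual form of the K-functional identity the paper records as equation (1.1). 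Your route is slightly longer but more structural: the level inequality isolates exactly where submajorization enters, and because every term in your decomposition is non-negative, Tonelli handles possibly infinite integrals automatically, so no truncation argument is needed. The paper's route is shorter and needs only Hardy's Lemma plus an elementary monotonicity of chord slopes. Both correctly avoid Calder\'on's characterization of submajorization. The only points worth making explicit in a final write-up are the two routine facts you use at the outset: that $\psi$ is continuous at $0$ (so that $(\psi(|f|))^{*,\mu}=\psi(f^{*,\mu})$), and that $\psi'_+$ is right-continuous (so that its Lebesgue--Stieltjes measure satisfies $\psi'_+(s)=\psi'_+(0)+\nu((0,s])$).
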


\begin{proposition}\label{prop:M}
The modular $\mathrm Q(f)$ for $f\in Q_{L_\ph,w}$ is expressed in terms of the level function $f^0$ by
\[
\mathrm Q(f)= \mathrm M(f^0)= \int_I \ph\left(\frac{f^0}w\right)w\,dm.
\]
\end{proposition}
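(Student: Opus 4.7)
The identity will be shown by two inequalities. First I would establish the easy direction $\mathrm Q(f)\le \mathrm M(f^0)$: by Fact~\ref{LF-prop}(ii) one has $f\prec f^0$, and since $f^0$ is decreasing (Fact~\ref{LF-prop}(i)), one has $\mathrm M(f^0)=\int_I \ph(f^0/w)\,w\,dm$. If this integral is infinite the inequality is trivial; otherwise $f^0\in M_{L_\ph,w}$, so $g=f^0$ is admissible in the infimum defining $\mathrm Q(f)$ and yields $\mathrm Q(f)\le \mathrm M(f^0)$.

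For the reverse inequality $\mathrm M(f^0)\le \mathrm Q(f)$, I would take an arbitrary $g\in M_{L_\ph,w}$ with $f\prec g$ and produce the chain
\[
\frac{f^0}{w}\prec_w \frac{g^0}{w}\prec_w \frac{g^*}{w}.
\]
The first relation comes from combining Fact~\ref{LF-prop}(iii), which gives $f^0\prec g^0$, with Lemma~\ref{M submajo-closed}, which converts submajorization of level functions into $\prec_w$-submajorization after division by $w$. The second relation is exactly Lemma~\ref{M level-closed} applied to $g$.

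Having this chain, I would invoke Fact~\ref{lem:submaj-phi}, applied to the convex increasing function $\ph$ with respect to the measure $\omega=w\,dm$, to deduce
\[
\ph\!\left(\frac{f^0}{w}\right)\prec_w \ph\!\left(\frac{g^*}{w}\right).
\]
Integrating with respect to $w\,dm$ (which amounts to integrating the $w$-rearrangements over all of $J$, hence preserves the $\prec_w$ inequality at full measure) yields
\[
\int_I \ph\!\left(\frac{f^0}{w}\right)w\,dm \le \int_I \ph\!\left(\frac{g^*}{w}\right)w\,dm = \mathrm M(g).
\]
Taking the infimum over all admissible $g$ gives $\mathrm M(f^0)\le \mathrm Q(f)$, completing the proof.

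The only delicate point is the second step: verifying that the two submajorization inequalities can indeed be chained and that Fact~\ref{lem:submaj-phi} transfers to the weighted setting. This is handled by applying the Fact with $\mu=\omega$, since $\prec_w$ is by definition the Hardy--Littlewood order for that measure. Everything else is a direct application of the already established lemmas about level functions.
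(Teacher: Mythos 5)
Your proposal is correct and follows essentially the same route as the paper: the inequality $\mathrm Q(f)\le \mathrm M(f^0)$ from $f\prec f^0$, and the reverse via the chain $f^0/w\prec_w g^0/w\prec_w g^*/w$ (Fact~\ref{LF-prop}(iii) plus Lemma~\ref{M submajo-closed}, then Lemma~\ref{M level-closed}) followed by Fact~\ref{lem:submaj-phi} and integration. The only cosmetic difference is that the paper re-derives the equivalence $f^0\prec g^0\iff f^0/w\prec_w g^0/w$ directly from Proposition~\ref{prop:S=W}(i), while you cite Lemma~\ref{M submajo-closed}, which packages exactly that step.
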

\begin{proof}
Let $f\in Q_{L_\ph,w}$, then for each $g\in M_{L_\ph,w}$ such that $f\prec g$ we have $f^0\prec g^0$ by Fact \ref{LF-prop}(iii). Since $(f^0/w) \circ W^{-1}$ is decreasing, it follows that  $(f^0/w) \circ W^{-1}\prec (g^0/w) \circ W^{-1}$, which by Proposition \ref{prop:S=W} (i)  is equivalent to ${f^0}/w\prec_w  {g^0}/w$. We also have  ${g^0}/w\prec_w  {g^*}/w$ by Lemma \ref{M level-closed}, whence by Fact \ref{lem:submaj-phi} above 
\[
\ph( {f^0}/w)\prec_w \ph({g^0}/w)\prec_w \ph({g^*}/w).
\]
 It follows  that $\mathrm M(f^0)\le \mathrm M(g)$, and so $\mathrm M(f^0)\le \mathrm Q(f)$.  Since $f\prec f^0$ by Fact \ref{LF-prop}(ii),  $\mathrm Q(f)\le  \mathrm M(f^0)$, and the proof is finished. 
 \end{proof}

 In view of Corollary \ref{cor:77}, $Q_{L_\varphi,w}=P_{L_\varphi,w}$, with equal norms, and  we will further use the notation (introduced in \cite{KR}  for the domain of the modular $\mathrm P$)
 \[
 \mathcal{M}_{\varphi,w}: =  Q_{L_\varphi,w}=P_{L_\varphi,w}
 \]

According to whether $L_\ph$ is equipped with its Luxemburg or Orlicz norm, the space $\mathcal{M}_{\varphi,w}$ is equipped with two different norms that we denote by $\|\cdot \|_{\mathcal{M}_{\varphi,w}} $, resp. $\|\cdot \|^0_{\mathcal{M}_{\varphi,w}}$. Each of these norms has two different expressions corresponding to the respective definitions of the norms in $Q_{L_\varphi,w}$ and $P_{L_\varphi,w}$. Moreover by Theorem \ref{th:Q-by-LF} the norm of a function in $Q_{L_\ph,w}$ is the gauge of the corresponding level function in $M_{L_\ph,w}$. We have thus:

\begin{theorem}\label{th:OL} Let $\varphi$ be an  Orlicz function and $w$ be a decreasing positive weight function on $I = (0,a)$, $a\le \infty$, such that $W<\infty$ on $I$. Then for $f\in \mathcal{M}_{\varphi,w}$ we have 
\begin{equation}\label{eq:811}
\|f\|_{\mathcal{M}_{\varphi,w}}  = \inf\{\|f\|_{{\mathrm M}_v}: \ v\prec w, v>0, \ v\downarrow\}  = \inf\{\|g\|_{\mathrm M}; \ f\prec g\} = \|f^0\|_{{\rm M}},
\end{equation}
\begin{equation}\label{eq:812}
\|f\|^0_{\mathcal{M}_{\varphi,w}} = \inf\{\|f\|^0_{{\mathrm M}_v}: \ v\prec w, v>0, \ v\downarrow\} = \inf\{\|g\|^0_{\mathrm M}; \ f\prec g\}  = \|f^0\|^0_{{\rm M}},
\end{equation}
where $\|\cdot\|_{{\mathrm M}}$, $\|\cdot\|_{{\mathrm M}_v}$
 are Luxemburg, and $\|\cdot\|^0_M, \|\cdot\|^0_{M_v}$ are Amemiya gauges.
 \end{theorem}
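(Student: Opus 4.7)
The plan is to identify each of the three expressions appearing in lines (\ref{eq:811}) and (\ref{eq:812}) as a Luxemburg or Amemiya gauge attached to a convex modular on $L^0$, and then to chain together the isometric identifications already established in the paper.

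First, since $L_\ph$ has the Fatou property (with either its Luxemburg or its Orlicz norm), Corollary \ref{cor:77} yields the isometric identification
\[
\mathcal M_{\ph,w}=Q_{L_\ph,w}=P_{L_\ph,w},
\]
so the norm $\|\cdot\|_{\mathcal M_{\ph,w}}$ (resp.\ $\|\cdot\|^0_{\mathcal M_{\ph,w}}$) coincides simultaneously with the $P$- and $Q$-norms. Applying Lemma \ref{lem:P} with $L_\ph$ equipped with its Luxemburg (resp.\ Orlicz) norm, the $P$-norm of $f$ equals the Luxemburg (resp.\ Amemiya) gauge of the convex modular $\mathrm P(f)=\inf\{\mathrm M_v(f): v\prec w,\ v>0,\ v\downarrow\}$, which by Lemma \ref{lem:infmod} is precisely $\inf\{\|f\|_{\mathrm M_v}: v\prec w,\ v>0,\ v\downarrow\}$ (resp.\ the analogous infimum of Amemiya gauges). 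This produces the first equalities in (\ref{eq:811}) and (\ref{eq:812}). The same reasoning applied to Lemma \ref{lem:Q} produces the second equalities via $\mathrm Q$.

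For the third equality I invoke Proposition \ref{prop:M}, which states $\mathrm Q(f)=\mathrm M(f^0)$. The only extra ingredient needed is the positive homogeneity of the level function, $(cf)^0=cf^0$ for $c>0$, which is immediate from Halperin's construction since $(cf)^*=cf^*$ and scaling $f^*$ by $c$ multiplies every ratio $R(\alpha,\beta)$ by $c$ without affecting which subintervals are maximal level intervals. Consequently $\mathrm Q(f/\lambda)=\mathrm M(f^0/\lambda)$ for every $\lambda>0$, and substitution into the defining formulas (\ref{eq:8.1}) and (\ref{eq:8.2}) of the Luxemburg and Amemiya gauges yields $\|f\|_{\mathcal M_{\ph,w}}=\|f^0\|_{\mathrm M}$ and $\|f\|^0_{\mathcal M_{\ph,w}}=\|f^0\|^0_{\mathrm M}$.

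No substantive obstacle is anticipated at this final collation stage: the genuine work lies in Proposition \ref{prop:M} (itself resting on Lemmas \ref{M level-closed}, \ref{immed-cor}, \ref{M submajo-closed} and Fact \ref{lem:submaj-phi}), in the identification $P_{E,w}=Q_{E,w}$ for $E$ with the Fatou property (Corollary \ref{cor:77}), and in the modular reformulations of Lemmas \ref{lem:P} and \ref{lem:Q}. Once these are available, Theorem \ref{th:OL} follows by assembling them.
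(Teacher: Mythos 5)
Your proof is correct and follows essentially the same route as the paper: Corollary \ref{cor:77} gives $P_{L_\ph,w}=Q_{L_\ph,w}$ with equal norms, and Lemmas \ref{lem:P} and \ref{lem:Q} (resting on Lemmas \ref{lem:infmod} and \ref{lem:convexifmod}) identify the two infimal expressions with the $P$- and $Q$-norms. The only divergence is the final equality, which the paper reads off directly from the general Theorem \ref{th:Q-by-LF} ($\|f\|_{Q_{E,w}}=\|f^0\|_{M_{E,w}}$), whereas you rederive it from the modular identity $\mathrm Q(f)=\mathrm M(f^0)$ of Proposition \ref{prop:M} combined with the (correct) homogeneity $(cf)^0=cf^0$; both arguments are valid and rest on the same level-function machinery.
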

 
 On the other hand,  $\mathcal{M}_{\varphi,w}$  is the modular space induced by both  modular $\rm Q$ and $\rm P$. To each of the modulars Q, P are associated its Luxemburg and Orlicz norms. It appears  that both the Luxemburg norms of $Q,P$ coincide with $\|\cdot \|_{\mathcal{M}_{\varphi,w}} $, and the Orlicz norms with $\|\cdot \|^0_{\mathcal{M}_{\varphi,w}}$.

 Applying the results developed so far we obtain additional insight on these modular structures. 

\begin{theorem}\label{th:OL*} Let $\varphi$ be an  Orlicz function and $w$ be a decreasing positive weight function on $I = (0,a)$, $a\le \infty$, such that $W<\infty$ on $I$. Then 
\begin{equation}\label{eq:level}
\mathrm Q(f) =  \mathrm M(f^0) = \mathrm M_{w^f}(f)\ge \mathrm P(f).
\end{equation}
For $f\in \mathcal{M}_{\varphi,w}$ we have 
\begin{equation}\label{eq:811a}
\|f\|_{\mathcal{M}_{\varphi,w}} =\|f\|_\mathrm P=\|f\|_\mathrm Q,
\end{equation}
\begin{equation}\label{eq:812a}
\|f\|^0_{\mathcal{M}_{\varphi,w}} =\|f\|^0_\mathrm P=\|f\|^0_\mathrm Q
\end{equation}
 If in addition $\varphi$ is a  $N$-function that is $\lim_{s\to 0} \varphi(s)/s = 0$ and $\lim_{s\to \infty} \varphi(s)/s = \infty$, and either $I$ is finite or  $W(\infty)= \int_0^\infty w\, dm=\infty$, then 
\begin{equation}\label{eq:813}
\mathrm P(f)= \mathrm M(f^0)= \mathrm Q(f).
\end{equation}\vskip-1pt
\end{theorem}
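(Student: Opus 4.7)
The plan is to dispatch the three parts of the theorem separately: the chain (\ref{eq:level}), then the norm identities (\ref{eq:811a})--(\ref{eq:812a}), and finally the sharper identity (\ref{eq:813}) under the $N$-function hypothesis.

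For (\ref{eq:level}), the equality $\mathrm Q(f) = \mathrm M(f^0)$ is already Proposition \ref{prop:M}, so the first new step is $\mathrm M(f^0) = \mathrm M_{w^f}(f)$. Using formula (\ref{eq:inverselevel}) together with $f^0 = R(\alpha,\beta)\,w$ on each maximal level interval $(\alpha,\beta)$, I would check the pointwise identity $f^*/w^f = f^0/w$ on all of $I$: both sides equal $R(\alpha,\beta)$ on an m.l.i., and off the union of m.l.i.'s one has $w^f = w$ and $f^0 = f^*$. Hence
\[
\mathrm M_{w^f}(f) = \int_I \ph\!\left(\frac{f^0}{w}\right) w^f\,dm, \qquad \mathrm M(f^0) = \int_I \ph\!\left(\frac{f^0}{w}\right) w\,dm,
\]
and these integrands coincide off the m.l.i.'s. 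On each m.l.i.\ $(\alpha,\beta)$ the factor $\ph(f^0/w) \equiv \ph(R(\alpha,\beta))$ is constant, while the definition of $w^f$ yields $\int_\alpha^\beta w^f\,dm = F(\alpha,\beta)/R(\alpha,\beta) = W(\alpha,\beta)$, so the two contributions on $(\alpha,\beta)$ agree as well. The inequality $\mathrm M_{w^f}(f) \ge \mathrm P(f)$ then follows at once, because $w^f$ is a positive decreasing weight submajorized by $w$ (as recalled in the paragraph following (\ref{eq:inverselevel})), hence admissible in the infimum of Lemma \ref{lem:P}.

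The norm identities (\ref{eq:811a}) and (\ref{eq:812a}) drop out by combining Lemmas \ref{lem:P} and \ref{lem:Q}, which identify the Luxemburg and Amemiya norms associated with $\mathrm P$ and $\mathrm Q$ with the abstract norms on $P_{L_\ph,w}$ and $Q_{L_\ph,w}$ respectively, with Corollary \ref{cor:77}, according to which $P_{L_\ph,w}=Q_{L_\ph,w}=\mathcal M_{\ph,w}$ isometrically in either norm. For (\ref{eq:813}), the chain (\ref{eq:level}) already provides $\mathrm Q(f) = \mathrm M(f^0) \ge \mathrm P(f)$, so only the reverse inequality $\mathrm P(f) \ge \mathrm M(f^0)$ is missing; under the assumption that $\ph$ is an $N$-function and that either $I$ is finite or $W(\infty) = \infty$ this is precisely \cite[Theorem 2.2]{KLR} (announced in the introduction of the present paper), which I would invoke directly. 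The hard part of the argument sits exactly here: proving $\mathrm M_v(f) \ge \mathrm M(f^0)$ for every admissible decreasing $0<v\prec w$ requires a Jensen-type inequality on each m.l.i.\ whose sharpness genuinely uses the $N$-function hypothesis, and the auxiliary assumption on $I$ or $W(\infty)$ is what rules out degenerate level intervals, explaining why (\ref{eq:813}) needs extra hypotheses whereas (\ref{eq:level}) holds in full generality.
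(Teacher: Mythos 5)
Your proposal is correct and follows essentially the same route as the paper: $\mathrm Q(f)=\mathrm M(f^0)$ from Proposition \ref{prop:M}, the identity $\mathrm M(f^0)=\mathrm M_{w^f}(f)$ from equation (\ref{eq:inverselevel}) (where you usefully spell out the pointwise identity $f^*/w^f=f^0/w$ and the computation $\int_\alpha^\beta w^f\,dm=W(\alpha,\beta)$ that the paper leaves implicit), the inequality $\mathrm M_{w^f}(f)\ge \mathrm P(f)$ from $w^f\prec w$, the norm identities from Lemmas \ref{lem:P}, \ref{lem:Q} and Corollary \ref{cor:77}, and (\ref{eq:813}) by invoking the external result of \cite{KLR}. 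The only slip is the citation: the relevant statement $\mathrm P(f)=\mathrm M(f^0)$ is Theorem 4.8 of \cite{KLR}, not Theorem 2.2.
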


 \begin{proof}
 The first part of  (\ref{eq:level}) follows from Proposition \ref{prop:M} and the second one from equality (\ref{eq:inverselevel}).  Since $w^f\prec w$, $\mathrm M_{w^f}(f)\ge \mathrm P(f)$.
 
 Equations (\ref{eq:811a}), (\ref{eq:812a}) follow from Lemmas \ref{lem:Q} and \ref{lem:P}.
 
Under the additional assumptions when $\varphi$ is $N$-function and $W(\infty) = \infty$, the first equation in (\ref{eq:813}) has been presented in Theorem 4.8 in \cite{KLR}.
 \end{proof}

Now let us summarize all known results describing the K\"othe dual  of the Orlicz-Lorentz space $\Lambda_{\varphi,w}$. For the space $\Lambda_{\varphi,w}$ by $\|\cdot\|_{\Lambda_{\varphi,w}}$ and  $\|\cdot\|^0_{\Lambda_{\varphi,w}}$ denote the  Luxemburg and Orlicz norm respectively. Recall that $\varphi_*(t)= \sup_{s\ge 0}\{st - \varphi(s)\}$, $t\ge 0$, is the complementary function to the Orlicz function $\varphi$. 

In the next theorem we state  complete descriptions of the dual spaces of the Orlicz-Lorentz space equipped with two standard Luxemburg and Orlicz norms. Recall indeed that the Orlicz-Lorentz space $\Lambda_{L_\ph,w} = \Lambda_{\ph,w}$ has a natural modular space structure given by the modular $\Phi$ defined in the equation (\ref{eq:modularOL}),
with respect to which $\Lambda_{\ph,w}$ is equipped with both a Luxemburg norm $\|\cdot\|_{\Lambda_{\varphi,w}}$ and an Orlicz norm $\|\cdot\|^0_{\Lambda_{\varphi,w}}$.  It is easy to see that these norms are identical to the norms of $\Lambda_{L_\ph,w} $ when the Orlicz space $L_\ph$ is equipped respectively with its own Luxemburg or Orlicz norm.

\begin{theorem}\label{th:dualOL} 
Let $\varphi$ be an Orlicz function and $w$ be a decreasing positive weight function on $I = (0,a)$, $a\le \infty$, such that $W<\infty$ on $I$. Then the K\"othe dual spaces to the Orlicz-Lorentz spaces $(\Lambda_{\varphi,w}, \|\cdot\|_{\Lambda_{\varphi,w}})$ and $(\Lambda_{\varphi,w}, \|\cdot\|^0_{\Lambda_{\varphi,w}})$ are as follows
\[
(\Lambda_{\varphi,w}, \|\cdot\|_{\Lambda_{\varphi,w}})' =
(\mathcal{M}_{\varphi_*,w}, \|\cdot\|^0_{\mathcal{M}_{\varphi_*,w}})\ \ \text{and}\ \ 
(\Lambda_{\varphi,w}, \|\cdot\|^0_{\Lambda_{\varphi,w}})' =
(\mathcal{M}_{\varphi_*,w}, \|\cdot\|_{\mathcal{M}_{\varphi_*,w}}),
\] 
where the norms $\|\cdot\|_{\mathcal{M}_{\varphi_*,w}}$ and $\|\cdot\|^0_{\mathcal{M}_{\varphi_*,w}}$ are given by (\ref{eq:811}) and (\ref{eq:812}),  respectively, where $\varphi$ is replaced by $\varphi_*$. 

\end{theorem}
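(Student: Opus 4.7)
The proof is essentially a specialization of the abstract duality theory already developed in the paper, combined with the classical duality between the Luxemburg and Orlicz norms on Orlicz spaces. The plan is to invoke Corollary \ref{cor:dual-lambda} with $E = L_\varphi$ and then track how the choice of norm on $L_\varphi$ propagates through to the norm on the dual space.

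First, I would recall the classical identity $(L_\varphi)' = L_{\varphi_*}$ as sets (where $\varphi_*$ is the complementary Orlicz function), together with the well-known duality of norms: the Köthe dual of $(L_\varphi,\|\cdot\|_\varphi)$ is isometric to $(L_{\varphi_*},\|\cdot\|^0_{\varphi_*})$, and the Köthe dual of $(L_\varphi,\|\cdot\|^0_\varphi)$ is isometric to $(L_{\varphi_*},\|\cdot\|_{\varphi_*})$. This is standard (see, e.g., the Orlicz norm/Luxemburg norm duality in \cite{KPS}, Ch.~II).

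Second, I apply Corollary \ref{cor:dual-lambda} to $E = L_\varphi$: since $W < \infty$ on $I$,
\[
(\Lambda_{L_\varphi,w})' = Q_{(L_\varphi)',w} = Q_{L_{\varphi_*},w}
\]
with equal norms. Now by Corollary \ref{cor:77} (which applies because $L_{\varphi_*}$, as a Köthe dual, has the Fatou property) we have $Q_{L_{\varphi_*},w} = P_{L_{\varphi_*},w} = \mathcal{M}_{\varphi_*,w}$ isometrically. The norm on this space, according to Definition \ref{def:Qspace} (or equivalently Theorem \ref{th:OL}), is directly inherited from the norm placed on $L_{\varphi_*}$, via either expression
\[
\inf\{\|g\|_{(L_{\varphi_*})_w}:f\prec g\}\quad\text{or}\quad \inf\{\|f\|_{(L_{\varphi_*})_v}: v\prec w,\ v>0,\ v\downarrow\}.
\]

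Third, I would combine the two preceding steps. Taking the Luxemburg norm on $\Lambda_{\varphi,w}$ corresponds to the Luxemburg norm on $L_\varphi$, whose Köthe dual norm on $L_{\varphi_*}$ is the Orlicz norm; via the isometric identifications above this yields the norm on $\mathcal{M}_{\varphi_*,w}$ built from the Orlicz norm of $L_{\varphi_*}$, that is $\|\cdot\|^0_{\mathcal{M}_{\varphi_*,w}}$ as given by (\ref{eq:812}). Symmetrically, starting from $\|\cdot\|^0_{\Lambda_{\varphi,w}}$ yields $\|\cdot\|_{\mathcal{M}_{\varphi_*,w}}$ as given by (\ref{eq:811}).

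There is no real obstacle here; the theorem is genuinely a corollary of Corollary \ref{cor:dual-lambda}, Corollary \ref{cor:77}, and Theorem \ref{th:OL}, once one has observed the classical Luxemburg--Orlicz norm duality for $L_\varphi$. The only point deserving a brief verification is that the abstract constructions $E_w$, $M_{E,w}$ and $Q_{E,w}$ transport the two Orlicz-space norms to the modular expressions in (\ref{eq:811})--(\ref{eq:812}); this is immediate from the fact that $f\in (L_{\varphi_*})_w \iff f^{*,w}\in L_{\varphi_*}$ with $\|f\|_{(L_{\varphi_*})_w}=\|f^{*,w}\|_{\varphi_*}$ (respectively $\|\cdot\|^0_{\varphi_*}$), so the two norms on $\mathcal{M}_{\varphi_*,w}$ of Theorem \ref{th:OL} match the Luxemburg and Orlicz gauges of the modulars $\mathrm M$, $\mathrm Q$, $\mathrm P$ described in Lemmas \ref{lem:Q} and \ref{lem:P}.
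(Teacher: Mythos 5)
Your proposal is correct and follows essentially the same route as the paper: the paper's proof is precisely the combination of Corollary \ref{cor:dual-lambda} (applied with $E=L_\varphi$) and the classical fact that the K\"othe dual of $L_\varphi$ with the Luxemburg (resp.\ Orlicz) norm is $L_{\varphi_*}$ with the Orlicz (resp.\ Luxemburg) norm. Your additional tracking of the norms through Corollary \ref{cor:77} and Theorem \ref{th:OL} just makes explicit what the paper leaves implicit.
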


 \begin{proof} This is is a consequence of Corollary \ref{cor:dual-lambda}, and the fact that when $E=L_\ph$ is an Orlicz space equipped with its Luxemburg (resp. Orlicz) norm then its K\"othe dual $E'$ is $L_{\ph_*}$ equipped with its Orlicz (resp. Luxemburg) norm.
\end{proof}

Comparing to Theorem 4.8 in \cite{KLR},  the above theorem is more general since it is proved here without  additional assumptions that $\varphi$ is $N$-function and $W(\infty) = \infty$. It is also more informative since it provides  three different formulas for the norms   in the dual space $\mathcal{M}_{\varphi_*,w}$. In fact each Luxemburg and Orlicz norm have   three  formulas expressed by (\ref{eq:811}) and (\ref{eq:812}),  corresponding either to modular $\rm Q$ or $\rm P$  or to level functions. The ones related to the modular $\mathrm Q$ are new here.

Finally we obtain a corollary on representation of the dual space for the classical Lorentz space $\Lambda_{p,w}$. If $\varphi(t) = t^p$, $1\le p<\infty$, then we use the following notations
\[
\Lambda_{p,w} := \Lambda_{\varphi,w} \ \ \ \text{and}\ \ \ \ \mathcal{M}_{p,w}: = \mathcal{M}_{\varphi,w}.
\]
In this case $\ph_*(t)=\frac{p^{1-q}}qt^q$ and the Orlicz norms on $L_{\ph_*}$ and  $\mathcal{M}_{\ph_*,w}$ coincide with the classical norms on $L_q$ and $\mathcal{M}_{q,w}$ respectively.  We provide below three different formulas of the norm in the dual space $(\Lambda_{p,w})^*$.  The  formula (8.14) has been presented as Corollary 4.9 in \cite{KLR},  and (8.15) has been proved by  Halperin in \cite[ Theorem 6.1,  Corollary, p. 288]{Ha53}. The first expression however, (8.13),  is new and it results from the introduction of the space $Q_{E,w}$ and $(\Lambda_{E,w})' = Q_{E',w}$.

\begin{theorem}\label{th:dualL} 
Let $1<p<\infty$, $\frac1p + \frac1q =1$, and $w$ be a decreasing positive weight function on $I = (0,a)$, $a\le \infty$, such that $W<\infty$ on $I$. Then
\[
(\Lambda_{p,w})'= \mathcal{M}_{q,w}.
\]
If in addition $W(\infty) = \infty$ when $I=(0,\infty)$, then
the dual space $(\Lambda_{p,w})^*$ is isometric to $\mathcal{M}_{q,w}$. In fact for every $F\in (\Lambda_{p,w})^*$ there exists $f\in \mathcal{M}_{q,w}$ such that
\[
F(g) = \int_I fg\,dm, \ \ \ g\in \Lambda_{p,w},
\]
and
\begin{align}
\|F\|=\|g\|_{\mathcal{M}_{q,w}}&= \inf \left\{\left(\int_I(g^*)^qw^{1-q}\right)^{1/q}: f\prec g\right\}\\
&= \inf \left\{\left(\int_I(f^*)^qv^{1-q}\right)^{1/q}: v\prec w, v>0, v\downarrow\right\}\\
&=\left(\int_I [(f^*)^0]^q w^{1-q}\right)^{1/q}.
\end{align}

\end{theorem}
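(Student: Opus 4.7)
The plan is to deduce Theorem 8.8 as a specialization of Theorems 8.4 and 8.7 to the Orlicz function $\varphi(t)=t^p$. A direct Young computation gives $\varphi_*(t)=\frac{p^{1-q}}{q}t^q$, which differs from $t^q$ only by a positive scalar, so $\mathcal{M}_{\varphi_*,w}=\mathcal{M}_{q,w}$ as sets. Since $\Phi(f)=\int_I(f^*)^p w\,dm$ is $p$-homogeneous, the classical Lorentz norm $\|f\|_{\Lambda_{p,w}}=\Phi(f)^{1/p}$ is exactly the Luxemburg norm induced by $\Phi$, so Theorem 8.7 yields the K\"othe duality $(\Lambda_{p,w})'=(\mathcal{M}_{q,w},\|\cdot\|^0_{\mathcal{M}_{\varphi_*,w}})$, establishing the first assertion.

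To promote the K\"othe dual to the topological dual, I would observe that the Orlicz space $L_p(J)$ is order continuous for $1\le p<\infty$; Proposition 3.2(iii) then gives order continuity of $\Lambda_{p,w}=\Lambda_{L_p,w}$, and the preliminaries yield $(\Lambda_{p,w})^*=(\Lambda_{p,w})'$ isometrically, with every functional represented as $F(g)=\int_I fg\,dm$.

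The three norm formulas are then obtained by expanding the three expressions of $\|\cdot\|^0_{\mathcal{M}_{\varphi_*,w}}$ given by Theorem 8.4, equation (8.12), with $\varphi$ replaced by $\varphi_*$. The key computation is that for any positive decreasing weight $u$ on $I$, the modular $\rho_u(f)=\frac{p^{1-q}}{q}\int_I(f^*)^q u^{1-q}\,dm$ is $q$-homogeneous, so Amemiya's formula reduces to minimizing $\lambda\mapsto (1+c\lambda^q A)/\lambda$ with $c=p^{1-q}/q$ and $A=\int_I (f^*)^q u^{1-q}\,dm$; the first-order condition $c(q-1)A\lambda^q=1$, combined with the conjugacy identity $p(q-1)=q$ (which simplifies $c(q-1)$ to $p^{-q}$), yields the clean answer $\|f\|^0_{\rho_u}=A^{1/q}$. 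Substituting $u=w$ in the Q-type expression (with $g$ such that $f\prec g$) gives (8.13); substituting $u=v\prec w$ in the P-type expression gives (8.14); and substituting $u=w$ in the level-function expression $\|f^0\|^0_{\mathrm M}$ gives (8.15).

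The main obstacle is essentially algebraic: verifying that the Amemiya minimization collapses to the simple $L_q$-type expression relies entirely on the Young-equality identity $p(q-1)=q$. The role of the additional hypothesis $W(\infty)=\infty$ for $I=(0,\infty)$ is, via Remark 6.5, to guarantee that the level function $(f^*)^0$ remains non-degenerate for every $f\in\mathcal{M}_{q,w}$, ensuring that formula (8.15) is well-defined and finite throughout the space.
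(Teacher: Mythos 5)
Your route coincides with the paper's: the K\"othe duality $(\Lambda_{p,w})'=\mathcal M_{q,w}$ is Theorem \ref{th:dualOL} specialized to $\ph(t)=t^p$, $\ph_*(t)=\frac{p^{1-q}}{q}t^q$, and the three displayed norm expressions come from Theorem \ref{th:OL} via the Amemiya minimization, which you carry out correctly (the paper merely asserts that the Orlicz norm of $L_{\ph_*}$ coincides with the classical $L_q$ norm, so your verification that $c(q-1)=p^{-q}$ makes the infimum collapse to $A^{1/q}$ is a worthwhile addition).

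There is, however, a genuine gap in the passage from the K\"othe dual to the topological dual. You obtain order continuity of $\Lambda_{p,w}$ from Proposition \ref{prop:2}(iii) unconditionally, and then relegate the hypothesis $W(\infty)=\infty$ to a non-degeneracy issue for the level function in the last formula. This is backwards. If $I=(0,\infty)$ and $W(\infty)=b<\infty$, then $J=(0,b)$ is finite, $L_p(J)$ contains the constants, and $\Lambda_{p,w}\supset L_\infty(I)$; for $f_n=\chi_{(n,\infty)}$ one has $f_n\downarrow 0$ a.e. while $f_n^*=\chi_{(0,\infty)}$ for every $n$, so $\|f_n\|_{\Lambda_{p,w}}=b^{1/p}\not\to 0$. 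Thus $\Lambda_{p,w}$ is \emph{not} order continuous in that case, singular functionals exist, and $(\Lambda_{p,w})^*$ is strictly larger than $(\Lambda_{p,w})'$ --- so an argument that never invokes $W(\infty)=\infty$ cannot establish the second assertion. The point is that $f_n\downarrow 0$ a.e. does not imply $f_n^*\downarrow 0$ when supports may have infinite measure; under $W(\infty)=\infty$ every $f\in\Lambda_{p,w}$ satisfies $f^*(t)\to 0$ as $t\to\infty$ (otherwise $\int_I(f^*)^pw\,dm\ge f^*(\infty)^pW(\infty)=\infty$), which restores that implication and yields order continuity, exactly as the paper does at this step. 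Conversely, the non-degeneracy of $(f^*)^0$ in the level-function formula needs no extra hypothesis: any $f\in\mathcal M_{q,w}=Q_{L_q,w}$ lies in $L_1+M_W$ by Theorem \ref{class Q}(i), hence $f^0\in L_1+M_W$ by Lemma \ref{stability}, and Theorem \ref{th:Q-by-LF} applies. You should therefore prove order continuity of $\Lambda_{p,w}$ directly under the stated hypothesis rather than citing Proposition \ref{prop:2}(iii).
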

\begin{proof}
The K\"othe duality follows from Theorem \ref{th:dualOL}. It is also well known and easy to show that $\Lambda_{p,w}$ is order continuous when $W(\infty) = \infty$ in the case of $I=(0,\infty)$. Therefore the K\"othe dual  space is isometric to the dual space via integral functionals \cite[Theorem 4.1]{BS}. 

\end{proof}

\begin{remark} 
 For $f\in L^0$ define
\[
 \mathrm Q_{\ph,w}(f)=\inf \left\{\int_I \ph(h) w\,dm: h\downarrow \hbox{ and }f\prec hw\right\}.
  \]
This formula was  introduced once by K. Nakamura \cite{Na}, who determined the modular dual to the natural modular in $\Lambda_{\ph,w}$ as being $Q_{\ph_*,w}$, when $\ph$ is a $N$-function satisfying a $\Delta_2$-condition and $\varphi_*$ a complementary function to $\varphi$. Note that 
\[Q_{\ph,w}(f)=\inf \left\{\mathrm M(g): f\prec g\hbox{ and }g/w\downarrow\right\}.
\]
Hence clearly $\mathrm Q(f)\le  Q_{\ph,w}(f)$. On the other hand since $f^0/w$ is non-increasing, we have $Q_{\ph,w}(f)\le M(f^0)=\mathrm Q(f)$ and finally $Q_{\ph,w}(f)=\mathrm Q(f)$.
\end{remark}

\section[-]{Examples of $M_{E,w}$ and $Q_{E,w}$ spaces}\label{sec:examples}
{}\medskip
 Let $w$ be a positive decreasing weight on $I=(0,a)$ such that $W<\infty$ on $I$, and $E$ be a Banach function space defined on the interval  $J=(0,b)$, $b=W(a)$, equipped with the Lebesgue  measure $m$. 
 In this section we will identify the spaces $M_{E,w}$ and $Q_{E,w}$ for some classical spaces $E$. Note that $M_{E,w},\, Q_{E,w}\subset L^0(I)$. 
 
\medskip

\begin{example}
 \emph{If  $E=L_1(J)$, then $(M_{L_1,w},\|\cdot\|_{M_{L_1,w}}) = (Q_{L_1,w}, \|\cdot\|_{Q_{L_1,w}}) = (L_1(I), \|\cdot\|_1)$.}
\end{example}

\begin{proof}
 Clearly $E_w = (L_1)_w=L_1(I,\omega)$ is a weighted $L_1$ space. 
We also have 
\[
f\in M_{L_1,w}\iff \frac{f^*}w\in (L_1)_w\iff \int_I \frac{f^*}w w\, dm<\infty\iff \int_I f^*\,dm<\infty \iff f\in L_1(I).
\]
Hence  $M_{L_1,w}=L_1(I)$ with the same norms. It follows that  $Q_{L_1,w}=L_1(I)$, also with the same norms.
\end{proof}

\medskip

\begin{example}\label{ex:9.2} \emph{If  $E=L_\infty(J)$ then 
\[
 M_{L_\infty,w} =\{f: \|f\|_{M_{L_\infty,w}}<\infty\} \ \  \text{ with } \ \ \ \|f\|_{M_{L_\infty,w}} = \inf\{C: f^* \le Cw\} =\|f^*/w\|_{L_\infty(I)}
 \]
\hfill $(Q_{L_\infty,w}, \|\cdot\|_{Q_{L_\infty,w}}) = (M_W, \|\cdot\|_{M_W}).$ \hfill\null
}
\end{example}

\begin{proof} 
 The weighted space $(L_\infty)_w$ consists of all  essentially bounded functions  on $I$ with respect to the measure  $d\omega = wdm$. Since $w$ is positive both spaces $(L_\infty)_w$ and $L_\infty(I)$ coincide with equality of norms. Thus 
$f\in M_{L_\infty,w}$ if and only if  $\frac{f^*}w\in (L_\infty)_w=L_\infty(I)$,  and  
\[
\|f\|_{M_{{L_\infty},w}} =\left\|\frac{f^*}w\right\|_{(L_\infty)_w} = \left\|\frac{f^*}w\right\|_{\infty}=\inf\{C: f^*\le Cw\}.
 \]
Note that  the gauge $\|\cdot\|_{M_{L_\infty,w}}$ is not a norm. As for the space $Q_{L_\infty,w}$,  by its definition 
$f\in Q_{L_\infty,w}$ if and only if there exists $g\in M_{L_\infty,w}$ with  $f\prec g $. This  is equivalent to
\begin{equation}\label{eq:9.1}
\exists g\in L^0_+,\, g\downarrow,   C>0 \hbox{ with }  g\le Cw \hbox{ and } \forall x\in I, \int_0^x f^*\,dm\le \int_0^x g\, dm.
\end{equation}
The above statement is equivalent to $ \int_0^x f^*\,dm\le C \int_0^x w\, dm$ for all $x\in I$ with the same constant $C$ as in (\ref{eq:9.1}). It follows that  $f\in M_W$ and 
\begin{align*}
\|f\|_{Q_{L_\infty,w}}& = \inf \{\|g\|_{M_{L_\infty,w}}: f\prec g, \ g \in {M_{L_\infty,w}} \} =  \inf \{C: f\prec g, \ g\le Cw\}  \\
&=\inf\{C: \  f\prec Cw\}= \sup_{x\in I}\frac 1{W(x)}\int_0^x f^*dm = \|f\|_{M_W}.
\end{align*}
Thus $Q_{L_\infty,w}$ coincides with the Marcinkiewicz space $M_W$ with the same norms.
\end{proof}

\medskip

\begin{example} \emph{ If  $E=L_1\cap L_\infty(J)$ then
\[
(M_{L_1 \cap L_\infty,w}, \|\cdot\|_{M_{L_1 \cap L_\infty,w}}) = (L_1 \cap M_{L_\infty,w}, \|\cdot\|_{L_1 \cap M_{L_\infty,w}})
\]
\[
(Q_{L_1 \cap L_\infty,w}, \|\cdot\|_{Q_{L_1 \cap L_\infty,w}}) =( L_1 \cap M_W, \|\cdot\|_{ L_1 \cap M_W}).
\]}
\end{example}

\begin{proof}

Let  $f\in M_{L_1\cap L_\infty,w}$. Then by Proposition \ref{properties class M} 
\begin{align*}
\|f\|_{M_{L_1\cap L_\infty,w}} & = 
\left\|\frac{f^*}{w}\right\|_{(L_1 \cap L_\infty)_w}=
\left\|\frac{f^*}{w}\circ W^{-1}\right\|_{L_1 \cap L_\infty(J)}\\
&=\left\|\frac{f^*}{w}\circ W^{-1}\right\|_{L_1(J)} \vee \left\|\frac{f^*}{w}\circ W^{-1}\right\|_{L_\infty(J)}\\&=
\|f\|_{L_1(I)}\vee \left\|\frac{f^*}{w}\right\|_{(L_\infty)_w} 
= \|f\|_{L_1(I)}\vee \|f\|_{M_{L_\infty,w}}
=\|f\|_{{L_1} \cap M_{L_\infty,w}}.
\end{align*} 
Thus  $M_{L_1\cap L_\infty,w} = L_1\cap M_{L_\infty,w}$ with identical gauges. \smallskip

For every $g\in L_1\cap L_{\infty,w} $ and $f\prec g$ we have $\|f\|_1 \le \|g\|_1$, and 
\[\|f \|_{M_W} = \inf\{C: f\prec Cw\} \le \inf\{C: g^* \le Cw\} = \|g\|_{M_{L_\infty,w}}.\]
Thus
\[
\|f\|_{L_1\cap M_W} = \|f\|_1 \vee \|f\|_{M_W}\le \|g\|_1 \vee \|g\|_{M_{L_\infty,w}} =\|g\|_{L_1\cap {M_{L_\infty,w}}}.
\]
It follows that $Q_{L_1\cap L_\infty,w} \subset L_1 \cap M_W$, and that for every $f\in Q_{L_1\cap L_{\infty,w}}$,
\[
\|f\|_{L_1\cap M_W} \le \|f\|_{Q_{L_1\cap L_\infty,w}}.
\]

 Conversely if $f\in L_1\cap M_W$ then $f^*\in L_1$ and $f\prec Cw$ where $C=\|f\|_{M_W}$. Then for every $x\in I$ we have
\[
\int_0^x f^*dm \le CW(x)\wedge \|f\|_{1}.
 \]
We have $b= W(a) = \sup_{x\in I}W(x)$. If $Cb\le \|f\|_1$ then the preceding inequalities mean that $f\prec Cw$. Setting $g=Cw$ we have $\|g\|_1\le \|f\|_1$ and $\|g\|_{M_{L_\infty,w}}=C=\|f\|_{M_W}$, hence $g\in L_1\cap M_{L_{\infty,w}}= M_{L_1\cap L_\infty,w}$ and by $f\prec g$ it follows that $f\in Q_{L_1\cap L_\infty,w}$ with
\[ 
\|f\|_{Q_{L_1\cap L_\infty,w}}\le \|g\|_{M_{L_1\cap L_\infty,w}} \le \|f\|_1\vee \|f\|_{M_W}.
\]
On the other hand if  $Cb> \|f\|_1$, there exists $x_f\in I$ such that $CW(x_f)=\|f\|_1$. Setting now $g=Cw\chi_{(0,x_f)}$, observe that $f\prec g$, $\|g\|_1= \|f\|_1$ and $\|g\|_{M_{{L_\infty,w}}}=C=\|f\|_{M_W}$, and conclude as above.
\end{proof}

\begin{example} \emph{If  $E=L_1 + L_\infty(J)$ then}
\begin{align*}
(M_{L_1 + L_\infty,w}, \|\cdot\|_{M_{L_1 + L_\infty,w}}) &= ({L_1+M_{L_{\infty},w}}, \|\cdot\|_{L_1+M_{L_{\infty},w}}),
\\
(Q_{L_1+L_\infty,w}, \|\cdot\|_{Q_{L_1+L_\infty,w}}) &= (L_1 + M_W, \|\cdot\|_{L_1 + M_W}).
\end{align*}
\end{example}

\begin{proof}
 We can show directly that $(L_1 + L_\infty)_w = (L_1)_w + (L_\infty)_w= (L_1)_w +   L_\infty(I)$ with equality of norms. By Example \ref{ex:9.2}, a function $v$ belongs to $M_{L_\infty,w}$ if there exists $C>0$ such that $v^* \le Cw$. Thus 
 \reqnomode
\begin{align}\label{decomp}
& f\in M_{L_1+L_\infty,w} \hbox{ with } \|f\|_{M_{L_1+L_\infty,w}}<1 \notag\\
&\iff \exists\, g\in (L_1)_w,h\in  L_\infty(I): \frac{f^*}w = g+h, \|g\|_{(L_1)_w}+\|h\|_\infty<1\notag\\
&\iff \exists  u\in   L_1(I), v\in L^0, C\ge 0: f^*=u+v,  |v|\le Cw, \|u\|_1+C<1 
\hfill{}\\ &\implies \exists  u\in   L_1(I), v\in L^0, C\ge 0: f^*=u+v,  v^*\le Cw, \|u\|_1+C<1\notag\\
&\iff  f^*\in L_1+M_{L_{\infty},w} \hbox{ with } \|f^*\|_{L_1+M_{L_\infty,w}}<1.\notag
\end{align}
We want to prove that $\|f^*\|_{L_1+M_{L_\infty,w}}<1$ implies $\|f\|_{L_1+M_{L_\infty,w}}<1$. Let $f^*=u+v$ with $v^*\le Cw$ and $\|u\|_1+C<1$. Let us consider two cases. 
\medskip

 Assume first that either the interval $I$ is finite or $\lim\limits_{t\to\infty}f^*(t)=0$. Then by Proposition \ref{prop:ryff} there exists a measure preserving transformation $\sigma$, either from $I$ onto $I$ if the support of $f$ has finite measure or from the support of $f$ onto  $I$ if the support of $f$ has infinite measure, such that $f=f^*\circ \sigma$. Then $f=u\circ\sigma+ v\circ\sigma$ with $u\circ\sigma, v\circ\sigma$ equimeasurable with $u,v$ respectively. In particular $\|u\circ\sigma\|_1=\|u\|_1$ and $(v\circ\sigma)^*=v^*\le Cw$. Hence  $\|f\|_{L_1+M_{L_\infty,w}}<1$.
\medskip

The proof is similar  if $I$ is infinite and $\lim\limits_{t\to\infty}f^*(t)>0$,  by using now Lemma \ref{lem:Ryff-3}.
\medskip

Therefore we have shown  that  if $\|f\|_{M_{L_1+L_\infty,w}}<1$ then $\|f\|_{L_1+M_{L_\infty,w}}<1$, which implies $M_{L_1+L_\infty,w}\subset L_1+M_{L_{\infty},w}$ and that this inclusion is gauge-decreasing. 

 Let us prove now the converse inclusion.
Let $f\in L_1+M_{L_{\infty},w}$ with $\|f\|_{L_1+M_{L_{\infty},w}}<1$ and $f=u+v$ be a decomposition with $u\in  L_1(I), v^*\le Cw$ and $\|u\|_1+C<1$. By the Lorentz-Shimogaki inequality \cite[Chapter 3, Theorem 7.4]{BS},
\[ f^*-v^*\prec (f-v)^*=u^*\]
it follows that $u_1:= f^*-v^*\in L_1$, with $\|u_1\|_1\le \|u\|_1$. Thus
\[
f^*= u_1+v^*, \hbox{ with } v^*\le Cw \hbox{ and } \|u_1\|_1+C<1.
\]
By (\ref{decomp}) it means that $\|f\|_{M_{L_1+L_\infty,w}}<1$.   Hence $L_1+M_{L_{\infty},w} \subset M_{L_1+L_\infty,w}$, and finally $L_1+M_{L_{\infty},w} = M_{L_1+L_\infty,w}$ with equal norms.
\medskip

Now we will show that $Q_{L_1+L_\infty,w}= L_1 + M_W$ with equality of  norms. First,
\begin{align*}
f\in Q_{L_1+L_\infty,w} &\iff \exists\, g\in M_{L_1+L_\infty,w},\ f\prec g\\
&\iff \exists\, g\in {L_1}+
M_{L_\infty,w},\ f\prec g\\
&\iff \exists u\in  L_1, \exists v \in M_{L_\infty,w}: f\prec u+v \\
&\implies \exists u\in  L_1, \exists v\in M_{L_\infty,w}: f^*\prec u^*+v^*.
\end{align*}
Since $f^*\prec u^*+v^*$, then by Fact \ref{fact:decomp} there exists a decomposition $f=u'+v'$ with $u'\prec u^*$, $v'\prec v^*$. Then from $u\in L_1$ and $v\in M_{L_\infty,w}$ it follows that 
\[
u'\in L_1, \ v'\in M_W \ \  \text{and} \ \ \|f\|_{L_1 + M_W} \le \|u'\|_{1}+\|v'\|_{M_W} \le \|u^*\|_{1}+\|v^*\|_{M_{L_\infty,w}}.
\]
 Thus $ Q_{L_1+L_\infty,w}\subset L_1+M_W$.

Moreover, in view of the above paragraph if $\|f\|_{Q_{L_1 + L_\infty,w}}<1$ we may choose $g\in M_{L_1+L_\infty,w} = L_1 + M_{L_\infty,w}$ with $f\prec g$ and $\|g\|_{L_1+M_{L_\infty,w}}<1$.  Thus there is a   decomposition $g=u+v$ with $\|u\|_{1}+\|v\|_{M_{L_\infty,w}}<1$. Hence   $\|f\|_{L_1+M_W}\le \|g\|_{L_1 + M_W} \le \|u\|_1 + \|v\|_{M_W}\le \|u\|_1 + \|v\|_{M_{L_\infty,w}} <1$. Hence  $\|f\|_{L_1 +M_W} \le \|f\|_{Q_{L_1 + L_{\infty,w}}}$.

As for the converse direction, let $f\in L_1+M_W$ with $\|f\|_{L_1+M_W}<1$ and $f=k+h$ be a decomposition with $\|k\|_{1}+\|h\|_{M_W}<1$. Then
\[f^*\prec k^*+h^*\prec k^*+Cw\ \hbox{ with } C=\|h^*\|_{M_W}=\|h\|_{M_W}.
\]
Setting $g=k^*+Cw$, we have $f\prec g$, and $\|Cw\|_{M_{L_\infty,w}} = C \|w/w\|_{(L_\infty)_w} = \|h\|_{M_W}$. Hence 
 \[
  \|g\|_{L_1+M_{L_\infty,w}}\le \|k\|_1 + \|Cw\|_{M_{L_\infty,w}} = \|k\|_1 + \|h\|_{M_W} < 1.
  \]
    Since $g$ is decreasing and $\|\cdot\|_{L_1+M_{L_\infty,w}} =\|\cdot\|_{M_{L_1+L_\infty, w}}$ , we have $g=g^*\in M_{L_1+L_\infty, w}$ with $\|g\|_{M_{L_1+L_\infty, w}}<1$, hence $f\in Q_{M_{L_1+L_\infty, w}}$ with $\|f\|_{Q_{M_{L_1+L_\infty,w}}}<1$. Thus $ L_1+M_W\subset Q_{M_{L_1+L_\infty,w}}$ and 
$\|f\|_{L_1 +M_W} \ge \|f\|_{Q_{L_1 + L_{\infty,w}}}$.    
    
    Consequently $ L_1+M_W = Q_{M_{L_1+L_\infty,w}}$ with equality of norms.
\end{proof}

\section*{Appendix}

We give here a self-contained and simple proof of Fact \ref{lem:submaj-phi}.
\medskip

\noindent For $x,y>0$ set
\[D(x,y)= 
\begin{cases}
{\psi(x)-\psi(y)\over x-y} &\hbox{ if } x\ne y,\\
\psi'_+(x) &\hbox{ if } x= y,
\end{cases}
\]
where $\psi'_+$ is the right derivative of $\psi$. 
Observe that if $x_1\le x_2$ and $y_1\le y_2$ then $D(x_1,y_1)\le D(x_2,y_2)$, by convexity of the function $\psi$. Indeed if we set $a_i=\min(x_i,y_i)$ and $b_i=\max(x_i,y_i)$, $i=1,2$, then $a_1\le a_2$, $b_1\le b_2$ and $D(x_1,y_1)=D(a_1,b_1)$ and $D(x_2,y_2)=D(a_2,b_2)$ are the respective  slopes of the chords  of the graph of $\psi$ corresponding to the intervals $[a_1,b_1]$ and $[a_2,b_2]$, or the  slopes of the right tangent  lines in the case $a_i=b_i$, $i=1,2$.

Since for any $f\in L^0(\Omega)$, $f^{*,\mu}\in L^0(0,\mu(\Omega))$, we may assume without loss of generality that $f,g:[0,\infty)\to [0,\infty)$ are decreasing non-negative functions.  Then  the function $D(f,g): t\mapsto D(f(t),g(t))$ is also decreasing. Assuming that $f\prec g$, we want to show that $\psi(f)\prec \psi(g)$.  We note that for $x\ge 0$,
\begin{align*}
0\le \int_0^x g\,dm-\int_0^x\,fdm=\int_0^x(g-f)\,dm=\int_0^x(g-f)_+\,dm-\int_0^x(g-f)_-\,dm,
\end{align*}
thus  for $x\ge 0$,
\[
\int_0^x(g-f)_-\,dm\le \int_0^x(g-f)_+\,dm.
\]
By Hardy's Lemma \cite[Proposition 3.6, Ch.2]{BS} this implies that 
\[
\int_0^x(g-f)_-D(g,f)\,dm\le \int_0^x(g-f)_+D(g,f)\,dm
\]
for $x\ge 0$. We have $\psi(g)-\psi(f)=(g-f)D(g,f)$ and since $D(g,f)\ge 0$ it follows that $(\psi(g)-\psi(f))_\pm=(g-f)_\pm D(g,f)$. Hence the preceding inequality may be rewritten as
\begin{align}\label{eq:submaj}
 \int_0^x(\psi(g)-\psi(f))_-\,dm\le \int_0^x(\psi(g)-\psi(f))_+\,dm.
\end{align}
Supposing that $\psi(f)$ is integrable on finite intervals, it implies the same for $(\psi(g)-\psi(f))_-$ since $(\psi(g)-\psi(f))_-\le \psi(f)$. Then for any $x>0$,
\begin{align*}
\int_0^x\psi(g)\,dm-\int_0^x\psi(f)\,dm&=\int_0^x(\psi(g)-\psi(f))\,dm\\
&= \int_0^x(\psi(g)-\psi(f))_+\,dm-\int_0^x(\psi(g)-\psi(f))_-\,dm\ge 0,
\end{align*}
which implies that  $\psi(f)\prec \psi(g)$. If we have no information on the local integrability of $\psi(f)$, we may apply the above to the couple $(f\wedge n,g)$, where $n\in \mathbb{N}$. Indeed we have $f\wedge n\le f\prec g$, $f\wedge n$ is decreasing, and $\psi(f\wedge n)=\psi(f)\wedge \psi(n)$ is bounded, and thus integrable on finite  intervals. Hence for all $n\in \mathbb{N}$, $x\ge 0$,
\[
\int_0^x \psi(f)\wedge\psi(n)\,dm\le \int_0^x \psi(g)\,dm, 
\]
and passing to the limit $n\to\infty$ we obtain that $\psi(f)\prec\psi(g)$.
\hfill\qed


\end{document}